\newtheorem{theorem}{Theorem}[section]
 \newaliascnt{lemma}{theorem}
  \newtheorem{lemma}[lemma]{Lemma}
  \newaliascnt{proposition}{theorem}
  \newtheorem{proposition}[proposition]{Proposition}
  \newaliascnt{corollary}{theorem}
  \newtheorem{corollary}[corollary]{Corollary}
 \newaliascnt{definition}{theorem}
\newtheorem{definition}[definition]{Definition}
\theoremstyle{definition}
  \newaliascnt{remark}{theorem}
  \newtheorem{remark}[remark]{Remark}
\newcommand{\eps}{\varepsilon} 
\renewcommand{\_}{\underline{\,\,\,\,}}
\newcommand{\CC}{\ensuremath{\mathbb{C}}}
\newcommand{\PP}{\ensuremath{\mathbb{P}}}
\newcommand{\RR}{\ensuremath{\mathbb{R}}} 
\newcommand{\ZZ}{\ensuremath{\mathbb{Z}}}
\newcommand{\cF}{\mathcal{F}}
\newcommand{\cL}{\mathcal{L}}   
\newcommand{\cM}{\mathcal{M}}
\newcommand{\cO}{\mathcal{O}}
\newcommand{\cZ}{\mathcal{Z}}
\newcommand{\reg}{\mathcal{O}}
\newcommand{\sym}{\mathfrak{S}}
\newcommand{\Ho}{\mathrm{H}}
\DeclareMathOperator{\rep}{rep}
\DeclareMathOperator{\Amp}{Amp}
\DeclareMathOperator{\Bl}{Bl}
\DeclareMathOperator{\ch}{ch}
\DeclareMathOperator{\Coh}{Coh}
\DeclareMathOperator{\Conv}{Conv}
\DeclareMathOperator{\Ext}{Ext}
\DeclareMathOperator{\ext}{ext}
\DeclareMathOperator{\Hom}{Hom}
\DeclareMathOperator{\id}{id}
\DeclareMathOperator{\Mov}{Mov}
\DeclareMathOperator{\NS}{NS}
\DeclareMathOperator{\Pic}{Pic}
\DeclareMathOperator{\rk}{rk}
\DeclareMathOperator{\Spec}{Spec}
\DeclareMathOperator{\Ind}{Ind}
\DeclareMathOperator{\Res}{Res}
\DeclareMathOperator{\Stab}{stab}
\DeclareMathOperator{\pr}{pr}
\DeclareMathOperator{\SStab}{sstab}
\DeclareMathOperator{\FM}{FM}
\newcommand{\KA}{K_n(A)}
\newcommand{\SA}{S_n(A)}
\newcommand{\PA}{P_n(A)}
\newcommand{\hA}{\widehat{A}}
\newcommand{\bA}{\overline{A}}
\newcommand{\vext}{\overrightarrow{\mathrm{ext}}}
\newcommand{\leqp}{%
	\mathrel{\raisebox{-0.5ex}{$\scriptscriptstyle($}}%
	\leq
	\mathrel{\raisebox{-0.5ex}{$\scriptscriptstyle)$}}%
}
\title[Non-symplectic moduli of sheaves on hyperkähler]{A smooth but non-symplectic moduli of sheaves on a hyperk\"ahler variety}
\author{Andreas Krug}
\address{Institut f\"ur Algebraische Geometrie, Leibniz Universit\"at Hannover, Welfengarten 1, 30167 Hannover, Germany}
\email{krug@math.uni-hannover.de}
\author{Fabian Reede}
\address{Institut f\"ur Algebraische Geometrie, Leibniz Universit\"at Hannover, Welfengarten 1, 30167 Hannover, Germany}
\email{reede@math.uni-hannover.de}
\author{Ziyu Zhang}
\address{Institute of Mathematical Sciences, ShanghaiTech University, 393 Middle Huaxia Road, 201210 Shanghai, P.R.China}
\email{zhangziyu@shanghaitech.edu.cn}
\subjclass[2020]{Primary: 14J60; Secondary: 14D20, 14E16, 14F08, 53C26}
\keywords{stable sheaves, moduli spaces, generalized Kummer varieties, tautological bundles}
\begin{document}

\begin{abstract}
For an abelian surface $A$, we consider stable vector bundles on a generalized Kummer variety $K_n(A)$ with $n>1$. We prove that the connected component of the moduli space which contains the tautological bundles associated to line bundles of degree $0$ is isomorphic to the blowup of the dual abelian surface in one point. We believe that this is the first explicit example of a component which is smooth with a non-trivial canonical bundle.
\end{abstract}

\maketitle

\section*{Introduction}

\subsection{Background and motivation}

Compact hyperk\"ahler manifolds are one of the fundamental building blocks of compact manifolds with trivial first Chern classes. Much effort has been devoted to construct examples of them, including \cite{beauville_83,mukai_86,yoshioka_01}. In particular, many of these examples are constructed by means of moduli spaces of (semi)stable sheaves on projective surfaces with trivial canonical bundles, which are K3 and abelian surfaces.

Let $S$ be a polarized K3 or abelian surface. We fix a Mukai vector $v$ and assume that $E$ is a stable sheaf on $S$. Then $E$ represents a closed point in the moduli space $\cM$ of stable sheaves of $S$. It is known that the tangent space of $\cM$ at the point $[E] \in \cM$ represented by $E$ is given as
$$ T_{[E]}\cM = \Ext_S^1(E, E). $$
Since the canonical bundle $\omega_S \cong \cO_S$ and $\dim S = 2$, the Serre duality gives
$$ \Ext_S^1(E, E) \cong \Ext_S^1(E, E \otimes \omega_S)^\vee \cong \Ext_S^1(E, E)^\vee, $$
which gives a non-degenerate pairing (i.e. the Yoneda pairing) on $\Ext_S^1(E, E)$. Moreover, it is clear that the pairing is anti-symmetric. It follows that the tangent space $\Ext_S^1(E, E)$ is naturally a symplectic vector space. If we assume that every closed point of $\cM$ is represented by a stable sheaf, then globalizing the above analysis shows that $\cM$ carries a holomorphic symplectic structure. 

It is certainly worth wondering whether the moduli space $\cM$ still carries a symplectic structure if the underlying surface $S$ itself is replaced by a hyperk\"ahler variety of higher dimension. However, in such a case the Serre duality would not give a non-degenerate pairing on $\Ext_S^1(E, E)$, hence there is no reason for us to expect that $\cM$ carries a symplectic structure. Nevertheless, many people have studied various examples of these moduli spaces; see for example \cite{schlick_10,wandel_kummer,stapleton_taut_2016,reede_zhang_22}. In recent years, the notions of ``modular sheaves'' and ``atomic sheaves'' have been developed, and moduli spaces of these sheaves have become of particular interests; see for example \cite{ogrady_22,markman_24,fati_24,beckmann_24,bottini_24,guo_24}. The reason is that for special choices of the Mukai vector $v$, it is still possible that the moduli space $\cM$ is hyperk\"ahler. This idea of constructing hyperk\"ahler manifolds has shown to be popular and successful.

The main motivation of our work is exactly the other side of the dichotomy. For a random choice of the Mukai vector $v$, even if the moduli space $\cM$ is smooth, there is a priori no reason to expect $\cM$ to be symplectic, or at least K-trivial. However, to the best of our knowledge, we have not seen any example in the literature, where such a smooth moduli space was explicitly constructed and shown not to have a trivial canonical bundle. We think it is a missing piece in the jigsaw puzzle. Indeed, there is an even deeper motivation of our work, that is to study general properties of moduli spaces of stable sheaves on hyperk\"ahler varieties of higher dimension.

From the existing literature, we have not seen much work in this direction. We suspect that the main difficulty is to determine the stability of sheaves on varieties of higher dimension, and to find enough stable sheaves occupying a complete component of their moduli spaces. In this manuscript, we will construct the first example of such a complete component.

\subsection{Approach and main result}

Our work is inspired by the earlier studies on the slope stability of tautological bundles with respect to some suitable polarization. Stapleton proved the following result on the stability of tautological bundles on Hilbert schemes
\begin{theorem}[{\cite[Theorem A]{stapleton_taut_2016}}]
	Let $S$ be a smooth projective surface, and $E$ a slope stable vector bundle on $S$, not isomorphic to $\cO_S$. Then the tautological bundle $E^{[n]}$ is a slope stable vector bundle on the Hilbert scheme $S^{[n]}$.
\end{theorem}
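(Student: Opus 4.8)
Recall that the tautological bundle is $E^{[n]} = p_* q^* E$, where $p\colon \Xi_n \to S^{[n]}$ and $q\colon \Xi_n \to S$ are the two projections from the universal length-$n$ subscheme $\Xi_n \subset S^{[n]} \times S$; since $p$ is finite and flat of degree $n$, the sheaf $E^{[n]}$ is locally free of rank $n \cdot \rk E$. I would establish slope stability with respect to a polarization of the form $H_\eps := H_n - \eps\,\delta$, where $H$ is an arbitrary ample class on $S$ with induced class $H_n$ on $S^{[n]}$, $2\delta$ is the class of the boundary divisor, and $\eps > 0$ is a sufficiently small rational number so that $H_\eps$ is still ample. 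After recording the first Chern class $c_1(E^{[n]}) = c_1(E)_n - (\rk E)\,\delta$ (in particular $c_1(\cO_S^{[n]}) = -\delta$, which explains why $\cO_S$ must be excluded) and computing $\mu_{H_\eps}(E^{[n]})$, the task reduces in the standard way to excluding a \emph{saturated} subsheaf $0 \neq \cF \subsetneq E^{[n]}$ with $\mu_{H_\eps}(\cF) \geq \mu_{H_\eps}(E^{[n]})$. The guiding observation is that the $\delta$-coefficient of $c_1(E^{[n]})$ is exactly $-\rk E$, so the boundary contributes the same slope per unit of rank as it does to the obvious subsheaves of $E^{[n]}$; for $\eps$ small, the comparison will therefore be controlled by $E$ itself rather than by the boundary.

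The comparison device is the nested Hilbert scheme $S^{[n-1,n]}$ of flags $\xi' \subset \xi$ with $\ell(\xi') = n-1$ and $\ell(\xi) = n$, equipped with its three natural morphisms: $\phi\colon S^{[n-1,n]} \to S^{[n]}$ (projective, generically finite of degree $n$), $\psi\colon S^{[n-1,n]} \to S^{[n-1]}$ (with connected fibres), and $\rho\colon S^{[n-1,n]} \to S$ sending a flag to its residual point. Pulling back the tautological exact sequence $0 \to \cI_{\xi'}/\cI_\xi \to \cO_\xi \to \cO_{\xi'} \to 0$ to $S^{[n-1,n]} \times S$, tensoring with the (locally free) pullback of $E$, and pushing forward along the projection to $S^{[n-1,n]}$ — which is unproblematic because $\Xi_n \to S^{[n]}$ and $\Xi_{n-1} \to S^{[n-1]}$ are finite flat, so their pushforwards commute with base change — yields a short exact sequence
\[
0 \longrightarrow \rho^* E \otimes \ell \longrightarrow \phi^* E^{[n]} \longrightarrow \psi^* E^{[n-1]} \longrightarrow 0 ,
\]
where $\ell$ is the residual-point line bundle on $S^{[n-1,n]}$. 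The argument then runs by induction on $n$, the base case $n = 1$ being precisely the hypothesis that $E$ is slope stable on $S^{[1]} = S$.

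For the inductive step, pull a hypothetical saturated destabilizer $\cF \subset E^{[n]}$ back to $S^{[n-1,n]}$ and look at the composite $\phi^* \cF \to \psi^* E^{[n-1]}$, with kernel $\cF_1 \subseteq \rho^* E \otimes \ell$ and image $\cF_2 \subseteq \psi^* E^{[n-1]}$. The slope of $\phi^*\cF$ with respect to $\phi^* H_\eps$ is a rank-weighted average of the slopes of $\cF_1$ and $\cF_2$, so at least one of these is at least $\mu_{\phi^* H_\eps}(\phi^* E^{[n]})$. If $\cF_2$ is the culprit, this contradicts the slope stability of $E^{[n-1]}$ (known by the inductive hypothesis); if $\cF_1$ is, then untwisting by $\ell$ produces a nonzero proper subsheaf of $E$ of slope at least $\mu(E)$, contradicting the slope stability of $E$ on $S$. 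In either case we reach a contradiction, and $E^{[n]}$ is slope stable.

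The step I expect to be the real work is making those two slope comparisons rigorous, because $\phi^* H_\eps$ is only nef — not ample — on $S^{[n-1,n]}$, and one must pass from the slope of a subsheaf of $\rho^* E$, respectively of $\psi^* E^{[n-1]}$, measured against this nef class, back to the slope stability of $E$, respectively of $E^{[n-1]}$. The cleanest way to control everything is to descend to a curve: fix a general complete-intersection curve $C \subset S^{[n]}$ cut out by $2n-1$ general members of $|m H_\eps|$ with $m \gg 0$, so that $\mu_{H_\eps}(\cF) < \mu_{H_\eps}(E^{[n]})$ is equivalent to $\mu(\cF|_C) < \mu(E^{[n]}|_C)$, and restrict the exact sequence above to the smooth curve $\phi^{-1}(C)$, which is finite of degree $n$ over $C$; on curves the pullback and pushforward slope manipulations are elementary, and the images of $\phi^{-1}(C)$ in $S^{[n-1]}$ and in $S$ are general enough to invoke Mehta--Ramanathan-type restriction theorems applied to the already-known stabilities. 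One then verifies that the corrections contributed by $\ell$, by the boundary $\delta$, and by the ramification of $\phi$ over $C$ carry the correct signs, and that the strict inequalities survive the limit $\eps \to 0$: this is exactly where the smallness of $\eps$ and the precise value $-\rk E$ of the $\delta$-coefficient are used. A last, routine point is that the induction is non-vacuous, i.e.\ $\rk E^{[n-1]} = (n-1)\rk E \geq 1$ and the theorem invoked at stage $n-1$ requires nothing beyond the standing hypothesis $E \not\cong \cO_S$.
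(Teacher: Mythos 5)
This statement is quoted from \cite[Theorem A]{stapleton_taut_2016}; the paper does not reprove it, so the only internal point of comparison is that the authors later adapt Stapleton's actual technique --- the $\sym_n$-equivariant ``Hilbert scheme to product'' functor sending $E^{[n]}$ to $\bigoplus_i p_i^*E$ on $S^n$, where a putative destabilizer becomes an invariant subsheaf that can be analysed directly against the stability of $E$ --- in their own stability argument (the functor $(\_)_P$ and \autoref{lem:Q-stable-HK}). Your route via the nested Hilbert scheme $S^{[n-1,n]}$ and induction on $n$ is genuinely different, and the short exact sequence $0\to\rho^*E\otimes\ell\to\phi^*E^{[n]}\to\psi^*E^{[n-1]}\to 0$ you start from is correct. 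But the step you yourself flag as ``the real work'' is where the argument has a genuine gap, and it is essentially the reason this inductive approach was only carried out in special cases (e.g.\ $n=2$) before Stapleton's product trick.

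Concretely: after restricting to $C'=\phi^{-1}(C)$, your dichotomy produces a subsheaf of $(\rho^*E\otimes\ell)|_{C'}$ or of $(\psi^*E^{[n-1]})|_{C'}$ of large slope, and to get a contradiction you must know that $E$ restricted to $\rho(C')$, respectively $E^{[n-1]}$ restricted to $\psi(C')$, is (semi)stable. Mehta--Ramanathan does not give this: it applies to general complete intersection curves in high multiples of the \emph{given} polarization on $S$ (resp.\ $S^{[n-1]}$), whereas $\rho(C')$ and $\psi(C')$ are images of a complete intersection curve on $S^{[n]}$ under morphisms with positive-dimensional fibres; their classes are $\rho_*\phi^*(H_\eps^{2n-1})$ and $\psi_*\phi^*(H_\eps^{2n-1})$, which for $\eps>0$ are not proportional to the curve classes with respect to which the stability of $E$ and $E^{[n-1]}$ is assumed, and even at $\eps=0$ one only gets semistability statements (compare \autoref{cor:Fbss}, where the analogous bundles are \emph{properly} $\gamma_0$-semistable). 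Relatedly, ``the strict inequalities survive the limit $\eps\to 0$'' is the wrong direction: strict inequalities degenerate to equalities at $\eps=0$, and what is actually needed is a uniform $\eps>0$ valid for \emph{all} subsheaves simultaneously, which requires a Grothendieck-lemma finiteness argument of the kind the paper carries out in \autoref{lem:finite_S_set}--\autoref{stable_allb}. A smaller but real issue is that a subsheaf of $\psi^*E^{[n-1]}$ of large $\phi^*H_\eps$-slope does not descend to a subsheaf of $E^{[n-1]}$, since $\phi^*H_\eps$ is not pulled back along $\psi$ and $\psi$ has two-dimensional fibres. So the skeleton is plausible but the proposal as written does not close; the cited proof avoids all of this by working $\sym_n$-equivariantly on $S^n$, where the quotient map to $S^{(n)}$ is finite and slopes transfer cleanly.
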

A family version of the above result was also established
\begin{theorem}[{\cite[Theorem 1.5]{reede_descent_2024}}]
	Let $S$ be a projective K3 surface, and $E$ a slope stable vector bundle on $S$, such that $$\ch(E) \neq \ch(\cO_S).$$ Assume that $\cM_S(\ch(E))$ is a smooth projective moduli space of slope vector bundles on $S$. Then the moduli space $\cM_{S^{[n]}}(\ch(E^{[n]}))$ of slope stable vector bundles on $S^{[n]}$ contains a smooth connected component isomorphic to $\cM_S(\ch(E))$. In other words, there is an inclusion
	$$ \begin{tikzcd}
 		\cM_S(\ch(E)) \ar[r, "f"] & \cM_{S^{[n]}}(\ch(E^{[n]}))
 	\end{tikzcd} $$
	which embeds $\cM_S(\ch(E))$ as a smooth connected component of $\cM_{S^{[n]}}(\ch(E^{[n]}))$. 
\end{theorem}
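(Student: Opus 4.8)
\emph{Step 1: the morphism $f$.} I would construct $f$ by the relative tautological construction. Let $\Xi_n\subseteq S^{[n]}\times S$ be the universal subscheme, with projections $p\colon\Xi_n\to S^{[n]}$ (finite flat of degree $n$) and $q\colon\Xi_n\to S$. For a flat family $\cE$ of vector bundles on $S$ over a base $T$, the relative tautological sheaf $\cE^{[n]}:=(p_T)_*q_T^*\cE$ on $S^{[n]}\times T$ is flat over $T$ with fibres $(\cE_t)^{[n]}$, and by Grothendieck--Riemann--Roch its Chern character is constant, determined by $\ch(\cE_t)$. By \cite[Theorem A]{stapleton_taut_2016} each $(\cE_t)^{[n]}$ is slope stable for a suitable polarisation on $S^{[n]}$; since $\cM_S(\ch(E))$ is projective the family is bounded, so such a polarisation can be fixed once and for all. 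As slope stability implies Gieseker stability, $\cM_{S^{[n]}}(\ch(E^{[n]}))$ is an open subscheme of a Gieseker moduli space, and the relative tautological construction --- being additive, hence compatible with twists by line bundles pulled back from the base --- yields a natural transformation of moduli functors, and with it the morphism $f$.

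\emph{Step 2: injectivity on closed points.} Next I would fix general points $x_2,\dots,x_n\in S$ and consider the closed immersion $\bar\iota\colon S\hookrightarrow S^{[n]}$ sending $x$ to the length-$n$ subscheme supported on $\{x,x_2,\dots,x_n\}$ (acquiring an embedded tangent vector where $x$ meets an $x_j$). Over $U:=S\setminus\{x_2,\dots,x_n\}$ the pulled-back universal subscheme is the disjoint union of the diagonal graph of $U\hookrightarrow S$ with the constant pieces $U\times\{x_j\}$, so flat base change gives $\bar\iota^*E^{[n]}|_U\cong E|_U\oplus\cO_U^{\oplus(n-1)\rk E}$; both sides are restrictions of locally free sheaves on the smooth surface $S$, hence this extends to $\bar\iota^*E^{[n]}\cong E\oplus\cO_S^{\oplus(n-1)\rk E}$. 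Thus $E^{[n]}\cong (E')^{[n]}$ forces $E\oplus\cO_S^{\oplus(n-1)\rk E}\cong E'\oplus\cO_S^{\oplus(n-1)\rk E}$ on $S$; since coherent sheaves on $S$ form a Krull--Schmidt category, $E,E'$ are indecomposable (being slope stable), and $\ch(E)=\ch(E')\neq\ch(\cO_S)$ rules out $\cO_S$ as a summand of either, cancellation gives $E\cong E'$.

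\emph{Step 3: the derivative of $f$ (the hard part).} As $E$ and $E^{[n]}$ are simple, the tangent spaces to the moduli spaces at $[E]$ and $[E^{[n]}]$ are $\Ext^1_S(E,E)$ and $\Ext^1_{S^{[n]}}(E^{[n]},E^{[n]})$, and $df_{[E]}$ is the natural map between them; I need it to be an isomorphism, for which it suffices to prove $\Ext^i_{S^{[n]}}(E^{[n]},E^{[n]})\cong\Ext^i_S(E,E)$ for $i=0,1$. Writing $\Phi:=p_*q^*\colon\Db(S)\to\Db(S^{[n]})$ (exact, since $p$ is finite flat), adjunction gives $\Ext^i_{S^{[n]}}(E^{[n]},E^{[n]})=\Hom_{\Db(S)}(E,\Phi^!\Phi(E)[i])$, so I would invoke the explicit computation of $\Ext$-groups between tautological bundles (Krug), equivalently the structure of the Fourier--Mukai kernel of $\Phi^!\Phi$ (Scala): $\Phi^!\Phi(E)$ contains $E$ as its ``diagonal'' summand, while every further summand is built from $E$, the structure sheaves of the various (partial) diagonals of powers of $S$, and the cohomology $\Ho^\bullet(\cO_S)$. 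On a K3 surface $\Ho^1(\cO_S)=0$, and the hypothesis $\ch(E)\neq\ch(\cO_S)$ makes the contribution of the further summands to $\Hom_{\Db(S)}(E,-[i])$ vanish for $i\le 1$, giving the desired isomorphism. (The degree-$0$ case re-proves Step~2.) This is the step I expect to be the main obstacle: pinning down $\Phi^!\Phi$ precisely enough to confirm that, under $\ch(E)\neq\ch(\cO_S)$, only the diagonal summand contributes in $\Ext$-degrees $0$ and $1$.

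\emph{Step 4: conclusion.} Finally, $f$ is proper (its source is projective), unramified (Step~3 gives injectivity on tangent spaces) and injective on closed points (Step~2); a proper unramified morphism of finite-type schemes over the ground field that is injective on closed points is a closed immersion. At $p=[E^{[n]}]\in f(\cM_S(\ch(E)))$ this induces a surjection $\widehat{\cO}_{\cM_{S^{[n]}},\,p}\twoheadrightarrow\widehat{\cO}_{\cM_S,\,[E]}$ of complete local rings; the target is regular of Krull dimension $\dim\Ext^1_S(E,E)=\dim\Ext^1_{S^{[n]}}(E^{[n]},E^{[n]})$, which is the embedding dimension of the source, so the source is itself regular of that dimension and the surjection is an isomorphism. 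Hence $f$ is a local isomorphism at every point, so an open immersion; being simultaneously a closed immersion, it identifies $\cM_S(\ch(E))$ with an open and closed --- hence smooth --- subscheme of $\cM_{S^{[n]}}(\ch(E^{[n]}))$, that is, a union of connected components, as claimed.
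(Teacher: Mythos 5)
Your overall architecture --- tautological family, injectivity via restriction to a surface sitting inside $S^{[n]}$ plus Krull--Schmidt, tangent-space comparison via the known $\Ext^*$-formula for tautological bundles, and the ``proper, unramified, injective, equal tangent dimensions'' endgame --- is exactly the one the paper relies on: the statement itself is only quoted from \cite{reede_descent_2024}, but the same machinery is run in this paper for the Kummer analogue (compare \eqref{eq:tautExt} and the use of \cite[Lemma 1.6]{reede_examples_2021} in the proof of \autoref{prop:main-result}). Steps 1, 2 and 4 are essentially fine (in Step 2 the map $\bar\iota$ need not extend to a morphism at the points $x_j$, but you only need the isomorphism over $U$ and then $j_*$ along the codimension-two open immersion $U\hookrightarrow S$, so this is cosmetic).

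The gap is in Step 3, precisely where you anticipated it. The formula you are invoking reads, for $n\ge 2$ and $E$ locally free on a K3 surface $S$,
\[
\Ext^*_{S^{[n]}}\bigl(E^{[n]},E^{[n]}\bigr)\cong \Bigl(\Ext^*_S(E,E)\otimes \Sym^{n-1}\Ho^*(\cO_S)\Bigr)\oplus\Bigl(\Ho^*(E^\vee)\otimes \Ho^*(E)\otimes \Sym^{n-2}\Ho^*(\cO_S)\Bigr),
\]
the Hilbert-scheme analogue of \eqref{eq:tautExt}. Since $\Ho^*(\cO_S)=\CC\oplus\CC[-2]$, the degree-one piece of the second (non-diagonal) summand is
\[
\bigl(\Hom(E,\cO_S)\otimes \Ho^1(E)\bigr)\oplus\bigl(\Ext^1(E,\cO_S)\otimes \Ho^0(E)\bigr).
\]
Slope stability together with $\ch(E)\neq\ch(\cO_S)$ does kill the degree-zero piece $\Hom(E,\cO_S)\otimes\Hom(\cO_S,E)$, which is why your Step 2 goes through, but it does \emph{not} kill the degree-one piece: for a stable $E$ with $\mu_H(E)>0$ one has $\Hom(E,\cO_S)=0$, yet $\Ext^1(E,\cO_S)\cong\Ho^1(E)^\vee$ and $\Ho^0(E)$ can both be nonzero (already for line bundles, e.g.\ $E=\cO_S(2C)$ with $C$ a $(-2)$-curve). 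In that case $\ext^1(E^{[n]},E^{[n]})=\ext^1(E,E)+h^0(E)\,h^1(E)>\ext^1(E,E)$ and the dimension count in your Step 4 collapses. This is not a cosmetic issue: \autoref{lem:hextG} of the present paper computes exactly this cross term for $E=\cO_A$ and finds its degree-one part $4$-dimensional, which is the whole reason the main theorem here needs the blow-up construction rather than a direct embedding. To close the gap you must show, for \emph{every} $[E']\in\cM_S(\ch(E))$, that $\Hom(E',\cO_S)\otimes\Ho^1(E')=0=\Ho^0(E')\otimes\Ho^1(E')^\vee$. This is automatic when $\mu_H(E)=0$ (then $\Ho^0(E')=\Ho^0(E'^\vee)=0$ by stability and $\ch(E)\ne\ch(\cO_S)$), but in general it requires an additional hypothesis or argument beyond $\ch(E)\neq\ch(\cO_S)$, which should be imported from the precise statement in \cite{reede_descent_2024}.
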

The above theorem shows that every smooth moduli space of slope stable vector bundles on a projective K3 surface $S$, as long as it does not parametrize $\cO_S$, can be embedded as a smooth connected component of the moduli space of slope stable vector bundles on the Hilbert scheme $S^{[n]}$ via the tautological construction.

The above pair of results can also be extended to generalized Kummer varieties
\begin{theorem}[{\cite[Proposition 2.9]{reede_stable_2022}}]
	Let $A$ be an abelian surface, $n \geq 2$, and $E$ a slope stable vector bundle on $S$, not isomorphic to $\cO_S$. Then the tautological bundle $E^{(n)}$ is a slope stable vector bundle on the generalized Kummer variety $\KA$.
\end{theorem}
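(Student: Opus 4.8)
The plan is to adapt Stapleton's strategy, replacing his induction on $n$ by a single reduction to his theorem for $A^{[n]}$ together with the slope-stability of $E$ on $A$ itself. Write $K=\KA$ and let $\iota\colon K\hookrightarrow\HA$ be the inclusion as the fibre over $0$ of the summation morphism $\sigma\colon\HA\to A$, so that $E^{(n)}=\iota^*E^{[n+1]}$. First I would introduce the incidence variety
$$\widetilde K=\bigl\{(Z',Z)\ :\ Z'\subset Z,\ \ell(Z')=n,\ \ell(Z)=n+1,\ \sigma(Z)=0\bigr\},$$
with its projection $q\colon\widetilde K\to K$, $(Z',Z)\mapsto Z$, which is finite of degree $n+1$, its projection $r\colon\widetilde K\to A^{[n]}$, $(Z',Z)\mapsto Z'$, and the residual-point map $\rho\colon\widetilde K\to A$. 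Since the cycle of $Z$ is that of $Z'$ plus the residual point, $\sigma(Z)=\sigma(Z')+\rho$, hence $\rho=-\sigma\circ r$; moreover $r$ is the blow-up of $A^{[n]}$ along the codimension-two locus where the residual point lies on $Z'$, so $r_*\cO_{\widetilde K}=\cO_{A^{[n]}}$, while $\rho$ has connected fibres (birational to $\KAX$), so $\rho_*\cO_{\widetilde K}=\cO_A$. Restricting to $\widetilde K$ the standard short exact sequence on the nested Hilbert scheme then gives
$$0\longrightarrow \rho^*E\otimes\cL\longrightarrow q^*E^{(n)}\longrightarrow r^*E^{[n]}\longrightarrow 0,$$
where $\cL=\cO_{\widetilde K}(-B)$ and $B$ is the exceptional divisor of $r$; fibrewise this is the restriction $H^0(E|_Z)\twoheadrightarrow H^0(E|_{Z'})$ with kernel the fibre $E_\rho$, corrected along $B$.

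Next I would fix polarizations $H$ on $\HA$, $H_n$ on $A^{[n]}$ and $H_K=\iota^*H$ on $K$, all induced from a single polarization of $A$ as in Stapleton's theorem, so that both $E^{[n+1]}$ on $\HA$ and $E^{[n]}$ on $A^{[n]}$ are slope-stable, and compute all slopes on $\widetilde K$ with respect to $q^*H_K$, which is ample as $q$ is finite. Assuming for contradiction that $E^{(n)}$ is not $\mu_{H_K}$-stable, choose a saturated $0\neq\cF\subsetneq E^{(n)}$ with $\mu_{H_K}(\cF)\ge\mu_{H_K}(E^{(n)})$. Pulling back along the finite flat map $q$ and intersecting $q^*\cF$ with the displayed sequence yields subsheaves $\cF_1\subseteq\rho^*E\otimes\cL$ and $\cF_2\subseteq r^*E^{[n]}$ with $q^*\cF$ an extension of $\cF_2$ by $\cF_1$; since the $\mu_{q^*H_K}$-slope of $q^*\cF$ (resp. of $q^*E^{(n)}$) is the rank-weighted average of the slopes of $\cF_1,\cF_2$ (resp. of $\rho^*E\otimes\cL$ and $r^*E^{[n]}$), it is enough to bound $\mu(\cF_1)$ and $\mu(\cF_2)$. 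Pushing $\cF_1\otimes\cL^{-1}$ forward along $\rho$ (using $\rho_*\cO=\cO_A$) embeds it into $E$, and slope-stability of $E$ on $A$ gives $\mu(\cF_1)<\mu(\rho^*E\otimes\cL)$ unless $\cF_1$ has full rank $\rk E$; pushing $\cF_2$ forward along $r$ (using $r_*\cO=\cO_{A^{[n]}}$) embeds it into $E^{[n]}$, and Stapleton's theorem gives $\mu(\cF_2)<\mu(r^*E^{[n]})$ unless $\cF_2$ has full rank $n\,\rk E$. If neither piece is full rank, this already forces $\mu_{q^*H_K}(q^*\cF)<\mu_{q^*H_K}(q^*E^{(n)})=(n+1)\,\mu_{H_K}(E^{(n)})$, a contradiction. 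In the two borderline cases I would argue as Stapleton does, using $E\not\cong\cO_A$: if $\cF_2$ is full rank then $q^*\cF$ surjects onto $r^*E^{[n]}$ and everything is governed by $\cF_1\subseteq\rho^*E\otimes\cL$, while if $\cF_1$ is full rank then $\rho^*E\otimes\cL\subseteq q^*\cF$ and, since $\cO_A$ is not a quotient of $E$, the sheaf $\cF_2$ cannot exhaust $r^*E^{[n]}$ and the resulting slope inequality is again strict. Either way one contradicts the choice of $\cF$.

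The hard part will be the slope bookkeeping. One must choose the polarizations on $A$, $A^{[n]}$, $\HA$ and $K$ compatibly so that Stapleton's theorem is applicable on $A^{[n]}$ for a polarization matching $q^*H_K$ up to the exceptional divisor $B$ and up to $\rho^*(\text{ample of }A)$, and then carefully track how $\mu_{H_K}$, the twist by $\cL=\cO_{\widetilde K}(-B)$, and the pushforwards along the finite map $q$, the blow-up $r$ and the fibration $\rho$ interact; the two full-rank cases, where the hypothesis $E\not\cong\cO_A$ is essential, are the delicate point. The assumption $n\ge2$ is used to ensure that the fibre of $\rho$ --- equivalently of $\sigma\colon A^{[n]}\to A$, namely $\KAX$ --- is a positive-dimensional connected variety, which is what makes $\rho_*\cO_{\widetilde K}=\cO_A$ and $r_*\cO_{\widetilde K}=\cO_{A^{[n]}}$ hold and the slopes transform as stated. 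Finally, I would note that for $E=L$ a line bundle of degree $0$ there is a shortcut avoiding $\widetilde K$: the finite étale cover $\Phi\colon A\times\KA\to\HA$, $(a,Z)\mapsto Z+a$, satisfies $\Phi^*L^{[n+1]}\cong L\boxtimes L^{(n)}$ (because the addition morphism $A\times A\to A$ pulls a degree-$0$ line bundle back to the exterior tensor square), and since $H^1(\KA)=0$ every polarization of $A\times\KA$ is a sum of pullbacks; hence slope-semistability of $L^{[n+1]}$ descends along $\Phi$ directly to $L^{(n)}$, and one only has to promote semistability to stability.
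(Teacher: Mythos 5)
Your strategy is genuinely different from the one behind the cited result: the proof of \cite[Proposition 2.9]{reede_stable_2022} (like Stapleton's original argument, and like \autoref{lem:Q-stable-HK} above) transports a destabilizing subsheaf to the product $P_n(A)$ via the functor $(\_)_P$, where $E^{(n)}$ becomes the direct sum $\bigoplus_{i=1}^{n+1}\overline p_i^{\,*}E$ and the class $H_P=\sum_i\overline p_i^{\,*}H$ is honestly ample, so that slope stability of $E$ on $A$ can be applied summand by summand. Your nested-Hilbert-scheme route avoids equivariant sheaves, but the ``slope bookkeeping'' you defer is not routine; it is where the proof lives, and as set up it does not go through.

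Concretely: (1) the projection $q\colon\widetilde K\to \KA$ is \emph{not} finite for $n\ge 2$ --- the fibre over $Z$ is the projectivized socle of $\cO_Z$, which is positive-dimensional over the deeper punctual strata (already for a length-$3$ fat point) --- so $q^*H_K$ is only big and nef and the ``finite flat'' manipulations need repair. (2) More seriously, since $\rho=(-1)\circ\sigma\circ r$ one has $q^*H_K=r^*(H_n+\sigma^*H)$: the entire class is pulled back from $A^{[n]}$ and contains the degenerate summand $\sigma^*H$. Bounding $\mu(\cF_2)$ therefore requires (semi)stability of $E^{[n]}$ with respect to the movable class $(H_n+\sigma^*H)^{2n-1}=H_n^{2n-1}+(2n-1)H_n^{2n-2}\sigma^*H+\binom{2n-1}{2}H_n^{2n-3}(\sigma^*H)^2$, of which Stapleton's theorem controls only the first term; semistability with respect to the two degenerate terms is neither addressed nor obvious. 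Likewise the bound on $\cF_1$ needs stability of $E$ with respect to $\sigma_*\bigl((H_n+\sigma^*H)^{2n-1}\bigr)$, which you have not identified with a multiple of $H$. (3) Even if all of this is repaired, you would obtain stability only with respect to the degenerate class $H_K$, whereas the cited proposition --- and its use in this paper --- asserts stability with respect to genuine ample classes $H_K+\epsilon D$; the perturbation step (the analogue of \autoref{stable_allb} and \autoref{thm:universal-extension-P1}) is missing. The closing shortcut for $L\in\Pic^0(A)$ is correct as far as it goes but yields only semistability and does not cover general $E$.
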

Similarly, we have also obtained
\begin{theorem}[{\cite[Theorem 2.10]{reede_stable_2022}}]
	Let $A$ be an abelian surface, $n \geq 2$, and $E$ a slope stable vector bundle on $A$, such that $$\ch(E) \neq \ch(\cO_A).$$ Assume that $\cM_A(\ch(E))$ is a smooth projective moduli space of slope vector bundles on $A$. Then the moduli space $\cM_{\KA}(\ch(E^{(n)}))$ of slope stable vector bundles on $\KA$ contains a smooth connected component isomorphic to $\cM_A(\ch(E))$. In other words, there is an inclusion
	$$ \begin{tikzcd}
 		\cM_A(\ch(E)) \ar[r, "f"] & \cM_{\KA}(\ch(E^{(n)}))
 	\end{tikzcd} $$
	which embeds $\cM_A(\ch(E))$ as a smooth connected component of $\cM_{\KA}(\ch(E^{(n)}))$. 
\end{theorem}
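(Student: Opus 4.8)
The plan is to turn the tautological construction $F \mapsto F^{(n)}$ into a morphism $f$ of moduli spaces, compute the extension groups $\Ext^{0}$ and $\Ext^{1}$ between tautological bundles on $\KA$, and read off from these that $f$ is an isomorphism onto a connected component of $\cM_{\KA}(\ch(E^{(n)}))$. To build $f$, fix a quasi-universal family $\cE$ on $\cM_A(\ch(E)) \times A$. Every $[F] \in \cM_A(\ch(E))$ is slope stable with $\ch(F) = \ch(E) \neq \ch(\cO_A)$, hence $F \not\cong \cO_A$, so by \cite[Proposition~2.9]{reede_stable_2022} the bundle $F^{(n)}$ is slope stable on $\KA$ (for the polarization with respect to which $\cM_{\KA}(\ch(E^{(n)}))$ is formed). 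Letting $\Xi \subset \KA \times A$ be the universal subscheme, with $p \colon \Xi \to \KA$ finite flat of degree $n+1$ and $q \colon \Xi \to A$ the second projection, the sheaf $(\id \times p)_{*}(\id \times q)^{*}\cE$ on $\cM_A(\ch(E)) \times \KA$ is locally free and flat over $\cM_A(\ch(E))$, with fibre $p_{*}q^{*}F = F^{(n)}$ over $[F]$ by flat base change. Since these fibres are all slope stable, this family is classified by a morphism $f \colon \cM_A(\ch(E)) \to \cM_{\KA}(\ch(E^{(n)}))$, $[F] \mapsto [F^{(n)}]$; replacing $\cE$ by another quasi-universal family only twists it by a line bundle pulled back from $\cM_A(\ch(E))$, so $f$ is well defined.

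The real content is the computation of $\Ext^{i}_{\KA}(F_{1}^{(n)}, F_{2}^{(n)})$ in degrees $i = 0, 1$ for slope stable bundles $F_{1}, F_{2}$ with $\ch(F_{j}) = \ch(E)$. I would carry it out either via the Bridgeland--King--Reid--Haiman type equivalence $\Db(\KA) \simeq \Db_{\sym_{n+1}}\bigl((A^{n+1})_{0}\bigr)$ attached to the crepant resolution $\KA \to (A^{n+1})_{0}/\sym_{n+1}$, under which $F^{(n)}$ becomes an explicit $\sym_{n+1}$-equivariant complex built out of $F$ and $\cO_A$, or by pulling back along the finite étale cover $A \times \KA \to \An$, computing on the Hilbert scheme $\An$ with the known formula for extension groups of tautological bundles, and using $\Ho^{1}(\KA, \cO_{\KA}) = 0$ to discard the contributions that over $\An$ stem from $\Ho^{1}(A, \cO_A)$. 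Either route yields a decomposition
\[
\Ext^{\bullet}_{\KA}\bigl(F_{1}^{(n)}, F_{2}^{(n)}\bigr) \;\cong\; \Ext^{\bullet}_A(F_{1}, F_{2})\ \oplus\ (\text{correction terms}),
\]
whose first summand is the image of the natural map induced by the exact functor $p_{*}q^{*}$, the correction terms being assembled from $\RHom_A(F_{j}, \cO_A)$, $\RHom_A(\cO_A, F_{j})$ and structure-sheaf cohomology of auxiliary abelian varieties. This is where $\ch(E) \neq \ch(\cO_A)$ enters: since the $F_{j}$ are slope stable \emph{vector bundles} with common non-trivial Chern character, the slope dichotomy forces $\Hom_A(F_{j}, \cO_A) = 0$ unless $\mu(F_{j}) < 0$ and $\Hom_A(\cO_A, F_{j}) = 0$ unless $\mu(F_{j}) > 0$, and these two cases are mutually exclusive; tracking cohomological degrees one then finds that no correction term survives in degree $0$ or $1$. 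I expect this bookkeeping --- making the McKay (or Scala--Krug) complex explicit enough to certify that all correction terms vanish in degrees $0$ and $1$ --- to be the main obstacle; everything else is formal.

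Granting the computation, in degree $0$ we get $\Hom_{\KA}(F_{1}^{(n)}, F_{2}^{(n)}) \cong \Hom_A(F_{1}, F_{2})$, which by stability of $F_{1}, F_{2}$ with equal Chern character is $\CC$ if $F_{1} \cong F_{2}$ and $0$ otherwise; since the $F_{j}^{(n)}$ are themselves stable, this shows $F_{1}^{(n)} \cong F_{2}^{(n)}$ if and only if $F_{1} \cong F_{2}$, so $f$ is injective on points (in particular $\dim \operatorname{im} f = \dim \cM_A(\ch(E))$). In degree $1$ we get that the natural map $\Ext^{1}_A(F, F) \to \Ext^{1}_{\KA}(F^{(n)}, F^{(n)})$ is an isomorphism; as this map is the differential of $f$ at $[F]$ (functoriality of the Kodaira--Spencer class under $p_{*}q^{*}$) and $\cM_A(\ch(E))$ is smooth, $\dim T_{[F^{(n)}]}\cM_{\KA}(\ch(E^{(n)})) = \dim_{[F]}\cM_A(\ch(E))$ at every point of the image, so $\cM_{\KA}(\ch(E^{(n)}))$ is smooth of that dimension along $\operatorname{im} f$. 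Thus $f$ is a morphism of smooth varieties of equal dimension with $df$ an isomorphism at every point, hence étale; being also proper with separated target, it is finite étale onto its image, which is consequently open and closed --- a connected component of $\cM_{\KA}(\ch(E^{(n)}))$, using that $\cM_A(\ch(E))$ is connected. Finally, injectivity of $f$ forces this finite étale map to have degree $1$, hence to be an isomorphism onto that component, which is the claim.
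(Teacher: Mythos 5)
Your overall architecture --- classify the family $(\id\times p)_*(\id\times q)^*\cE$ to get $f$, compute $\Hom$ and $\Ext^1$ between tautological bundles via a McKay-type formula, and conclude from injectivity plus a tangent-space count that the image is a smooth connected component --- is exactly the strategy of the cited source and of this paper's own \autoref{prop:main-result} (which closes the argument by quoting \cite[Lemma 1.6]{reede_examples_2021} in place of your \'etale-plus-proper argument; the two are equivalent). The decomposition you anticipate is precisely the paper's \eqref{eq:tautExt} from \autoref{prop:Mea}: for $n\ge 2$,
\[
\Ext^*_{\KA}\bigl(F_1^{(n)},F_2^{(n)}\bigr)\cong\bigl(\Ext^*_A(F_1,F_2)\otimes \Ho^*(\reg_{K_{n-1}(A)})\bigr)\oplus\bigl(\Ho^*(F_1^\vee)\otimes\Ho^*(F_2)\otimes \Ho^*(\reg_{K_{n-2}(A)})\bigr)\,,
\]
and since $\Ho^*(\reg_{K_{m}(A)})$ is $\CC$ in degree $0$ and vanishes in degree $1$, the correction term is $\Hom(F_1,\reg_A)\otimes\Hom(\reg_A,F_2)$ in degree $0$ and $\Hom(F_1,\reg_A)\otimes\Ho^1(F_2)\,\oplus\,\Ext^1(F_1,\reg_A)\otimes\Hom(\reg_A,F_2)$ in degree $1$.

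The gap is your claim that the slope dichotomy kills all of this in degrees $0$ and $1$. It does kill the degree-$0$ term: $F_1$ and $F_2$ have equal slope, so at most one of $\Hom(F_1,\reg_A)$ and $\Hom(\reg_A,F_2)$ is nonzero (and both vanish when $\mu=0$, since $F_i\not\cong\reg_A$); hence $f$ is injective on points. But each degree-$1$ summand pairs a $\Hom$ with an $\Ext^1$, and slope stability says nothing about $\Ext^1(F,\reg_A)\cong\Ho^1(F)^\vee$. For instance, if $\mu_H(E)>0$ the term $\Ext^1(F,\reg_A)\otimes\Hom(\reg_A,F)$ survives whenever $F$ has both a section and nonzero $\Ho^1$: on $A=E_1\times E_2$ take $E=p_1^*M$ with $\deg M=1$, so that $\cM_A(\ch E)\cong\Pic^{c_1(E)}(A)$ is smooth projective of dimension $2$ and consists entirely of line bundles, yet $h^0(E)=h^1(E)=1$ gives $\ext^1(E^{(n)},E^{(n)})=\ext^1(E,E)+h^1(E^\vee)h^0(E)=2+1=3$. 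So the vanishing you assert does not follow from (and in fact fails under) the hypotheses as quoted; your argument is complete only under the additional input $\Hom(F,\reg_A)=0=\Hom(\reg_A,F)$ for every $F$ in the moduli space, which is automatic when $\mu_H(E)=0$ --- the only case this paper actually uses, namely $E\in\Pic^0(A)\setminus\{\reg_A\}$ --- but not in general. You correctly located the crux of the proof; the proposed resolution by the slope dichotomy alone is not sufficient, and you should either impose such a cohomological hypothesis or restrict to slope zero.
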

In the exceptional case when $\ch(E) = \ch(\cO_A)$, we observe that
$$ \cM_A(\ch(E)) = \Pic^0(A) = \hA. $$
In such a case, if $E$ is a non-trivial line bundle of degree $0$ on $A$, then $E^{(n)}$ is still a slope stable vector bundle. However, we will see that $\cO_A^{(n)}$ is unstable, which implies that the rational map
	\begin{equation}\label{eqn:trivial-split}
		\begin{tikzcd}
 			\hA \ar[r,dashed,"f"] & \cM_{\KA}(\ch(\cO_A^{(n)}))
 		\end{tikzcd}
	\end{equation}
 	that sends a line bundle $L$ to its tautological bundle $L^{(n)}$ is undefined at a single point representing the trivial line bundle $\cO_A$. Since both moduli spaces are projective, a natural question is to resolve this rational map. The answer to this question is the main result of the present paper
\begin{theorem}[\autoref{thm:sec2}, \autoref{lem:negative-result}, \autoref{prop:main-result}]\label{thm:main}
 	Let $A$ be an abelian surface and $n \geq 2$.
 	\begin{enumerate}[label=(\roman*)]
 		\item The tautological bundle $\cO_A^{(n)}$ splits as
	 	$$ \cO_A^{(n)} = \cO_{\KA} \oplus Q $$
	 	such that both direct summands are stable of different slopes. As a result $\cO_A^{(n)}$ is unstable.
   \item The extension group satisfies $$ \dim \Ext^1(\cO_{\KA}, Q) = 2, $$
	 	and every non-trivial extension of $\cO_{\KA}$ by $Q$ is stable.
 		\item The rational map \eqref{eqn:trivial-split} can be resolved by blowing up $\hA$ at the neutral point $o \in \hA$ representing the trivial bundle $\cO_A$
 	\begin{equation*}
 		\begin{tikzcd}
 			\Bl_{o} \hA \ar[d] \ar[dr,"\overline{f}"] & \\
 			\hA \ar[r,dashed,"f"] & \cM_{\KA}(\ch(\cO_A^{(n)}))
 		\end{tikzcd}
 	\end{equation*}
 	such that the exceptional divisor of the blowup parametrizes the universal extension of $\cO_{\KA}$ by $Q$.
 	\item The classifying morphism $\overline{f}$ embeds $\Bl_{o} \hA$ as a smooth connected component of $\cM_{\KA}(\ch(\cO_A^{(n)}))$. In particular, we constructed a smooth projective connected component of the moduli space of stable bundles on $\KA$ with a non-trivial canonical bundle.
 	\end{enumerate}
\end{theorem}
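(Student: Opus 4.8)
The plan is to prove the four parts in sequence; parts (i) and (ii) carry the real content, after which (iii) and (iv) follow from the usual ``universal family plus classifying morphism'' mechanism. For part (i) (which is \autoref{thm:sec2}) the point is that $\cO_A^{(n)}$ is the pushforward to $\KA$ of the structure sheaf of the universal subscheme $\cZ\subset\KA\times A$, hence a sheaf of $\cO_{\KA}$-algebras, locally free of rank $n+1$; the unit and the trace give morphisms $\cO_{\KA}\hookrightarrow\cO_A^{(n)}\twoheadrightarrow\cO_{\KA}$ with composite multiplication by $n+1$, so over $\CC$ one obtains the splitting $\cO_A^{(n)}=\cO_{\KA}\oplus Q$ with $Q:=\ker(\mathrm{tr})$ locally free of rank $n$. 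The line bundle $\cO_{\KA}$ is trivially slope stable, the slope stability of $Q$ is the substance of \autoref{thm:sec2}, and for the suitable ample polarization $H$ on $\KA$ the class $c_1(Q)=c_1(\cO_A^{(n)})$ has negative degree against $H^{2n-1}$, so $\mu_H(Q)<0=\mu_H(\cO_{\KA})$; then $\cO_{\KA}\subset\cO_A^{(n)}$ destabilises, $\cO_A^{(n)}$ is not semistable, and \eqref{eqn:trivial-split} is genuinely undefined at $o$.

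For part (ii), since $\KA$ is irreducible holomorphic symplectic one has $H^1(\KA,\cO_{\KA})=0$, so $\Ext^1(\cO_{\KA},Q)=H^1(\KA,Q)=H^1(\KA,\cO_A^{(n)})$; the cohomology of the tautological bundle $\cO_A^{(n)}$ I would obtain either from the known formulas for the cohomology of tautological bundles on generalised Kummer varieties, or directly from the Leray spectral sequence for $\pr_{\KA}\colon\KA\times A\to\KA$ applied to $\cO_{\cZ}$, with input $H^\bullet(A,\cO_A)$ and $H^\bullet(K_{n-1}(A),\cO)$; the outcome is $h^0(\cO_A^{(n)})=1$ and $h^1(\cO_A^{(n)})=2$, whence $H^0(\KA,Q)=0$ and $\dim\Ext^1(\cO_{\KA},Q)=2$. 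For the stability of a non-split extension $0\to Q\to\cE\to\cO_{\KA}\to0$, note that $\Hom(\cO_{\KA},Q)=0$ forces $\Hom(\cO_{\KA},\cE)=0$, since a nonzero section of $\cE$ would split the sequence. If $\cF\subsetneq\cE$ is a saturated subsheaf with $\mu_H(\cF)\ge\mu_H(\cE)$, then either $\cF\subseteq Q$, and being a proper saturated subsheaf of the stable bundle $Q$ it has $\mu_H(\cF)<\mu_H(Q)<\mu_H(\cE)$, a contradiction; or $\cF$ surjects onto a nonzero ideal sheaf $\cI\subseteq\cO_{\KA}$ with $\cF_0:=\cF\cap Q$ saturated in $Q$, and a slope computation using $\mu_H(\cF_0)<\mu_H(Q)$, $\mu_H(\cI)\le0$, $\rk\cF\le n$ and the arithmetic of $H$ forces $\rk\cF_0=0$, so $\cF\cong\cI$; the resulting splitting of $\cE\times_{\cO_{\KA}}\cI$ then puts the class of $\cE$ in the image of $\Ext^1(\cO_W,Q)\to\Ext^1(\cO_{\KA},Q)$, which vanishes because $W$ has codimension $\ge2$ — once more contradicting non-splitness. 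Hence every non-split extension is slope stable.

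For part (iii), the relative version of the tautological construction applied to the Poincaré bundle on $\hA\times A$ yields a rank-$(n+1)$ bundle $\cW$ on $\hA\times\KA$ with $\cW|_{\{L\}\times\KA}=L^{(n)}$ and $\cW|_{\{o\}\times\KA}=\cO_A^{(n)}$. Pulling $\cW$ back along $\Bl_o\hA\times\KA\to\hA\times\KA$, composing its restriction to the exceptional locus $E_0\times\KA$ with the trace projection $\cO_A^{(n)}\twoheadrightarrow Q$, and taking the kernel produces an elementary modification $\cW'$, locally free of rank $n+1$, equal to $L^{(n)}$ over every $L\ne o$ and restricting on each fibre $\{x\}\times\KA$ with $x\in E_0$ to an extension of $\cO_{\KA}$ by $Q$. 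That these extensions are non-split and sweep out $\PP(\Ext^1(\cO_{\KA},Q))$ follows from the Kodaira--Spencer map of $\cW$ at $o$: its composite $T_o\hA\to\Ext^1(\cO_A^{(n)},\cO_A^{(n)})\to\Ext^1(\cO_{\KA},Q)$ with the projection onto the relevant summand of the splitting is, under the natural identifications $T_o\hA=H^1(A,\cO_A)$ and $\Ext^1(\cO_{\KA},Q)\cong H^1(\KA,\cO_A^{(n)})\cong H^1(A,\cO_A)$ from part (ii), an isomorphism, so projectivising identifies $E_0$ with $\PP(\Ext^1(\cO_{\KA},Q))$ and realises $\cW'|_{E_0\times\KA}$ as the universal extension. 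All fibres of $\cW'$ are therefore slope stable by (i)--(ii), and $\cW'$ induces a morphism $\overline{f}\colon\Bl_o\hA\to\cM_{\KA}(\ch(\cO_A^{(n)}))$ restricting to $f$ away from $E_0$, with $E_0$ mapped to the universal extension as asserted.

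For part (iv) it remains to see that $\overline{f}$ embeds $\Bl_o\hA$ as a connected component. It is injective: on $\hA\setminus\{o\}$ because $L\mapsto L^{(n)}$ is injective on isomorphism classes, on $E_0$ because distinct points of $\PP(\Ext^1(\cO_{\KA},Q))$ give non-isomorphic extensions (using $\Hom(\cO_{\KA},Q)=\Hom(Q,\cO_{\KA})=0$ and $\End(Q)=\CC$), and no $L^{(n)}$ is an extension of $\cO_{\KA}$ by $Q$. Moreover at every point $x$ the Kodaira--Spencer map of $\cW'$ into $\Ext^1(\cE_x,\cE_x)$ is injective, and $\dim\Ext^1(\cE_x,\cE_x)=2=\dim\Bl_o\hA$: for $x=L\ne o$ this is the tangent space identity $\Ext^1(L^{(n)},L^{(n)})\cong\Ext^1_A(L,L)=H^1(A,\cO_A)$ underlying the embedding results recalled above, and for $x\in E_0$ one computes it from the extension sequence together with the relevant $\Ext$-groups between $\cO_{\KA}$ and $Q$. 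Hence $\overline{f}$ is étale onto its image and the moduli space is smooth of dimension $2$ along it; an injective étale morphism is an open immersion, and since $\Bl_o\hA$ is projective and the moduli space separated, $\overline{f}$ is proper, so its image is also closed and therefore a connected component. Finally $\omega_{\Bl_o\hA}=\pi^*\omega_{\hA}\otimes\cO(E_0)=\cO(E_0)$ is non-trivial (as $E_0^2=-1$), so this smooth projective component is not K-trivial. I expect the hardest steps to be the stability of the non-split extensions in (ii) and the two $\Ext$-computations behind (iii)--(iv) — the Kodaira--Spencer identification $E_0\cong\PP(\Ext^1(\cO_{\KA},Q))$ and the equality $\dim\Ext^1(\cE_x,\cE_x)=2$ along $E_0$ — since these require precise bookkeeping of small $\Ext$-groups and of the chosen polarization rather than a soft argument.
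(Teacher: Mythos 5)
Your overall architecture matches the paper's --- trace splitting of $q_*\cO_{\cZ}$, the computation $h^1(\cO_A^{(n)})=2$, elementary modification of the pulled-back tautological family along the exceptional divisor, and the classifying morphism --- and your Kodaira--Spencer route to identifying the exceptional fibre with $\PP(\Ext^1(\cO_{\KA},Q))$ is a reasonable alternative to the paper's argument (which instead rules out split fibres by a second elementary modification and a Fourier--Mukai computation of $\Ho^0(\cL^{(n)}_D)$, and then pins down the degree of $E\to\PP^1$ by a Chern-class count on the blowup). However, the central step of part (ii), the stability of a non-split extension $F_b$ with respect to an ample class, contains a genuine error. Write $d=\delta\cdot H^{2n-1}>0$ for your ample $H$, so $\mu_H(F_b)=-d/(n+1)$ and $\mu_H(Q)=-d/n<\mu_H(F_b)$. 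For a saturated subsheaf $\cF$ with $\cF_0=\cF\cap Q$ of rank $r_0$ and image $\cI\subseteq\cO_{\KA}$, the inequalities $\mu_H(\cF_0)<\mu_H(Q)$ and $\mu_H(\cI)\le 0$ only give $\mu_H(\cF)<\frac{-r_0 d/n}{r_0+1}$, and for every $1\le r_0\le n-1$ this bound is \emph{strictly larger} than $\mu_H(F_b)=-d/(n+1)$ (e.g.\ $n=2$, $r_0=1$: $-d/4>-d/3$). So the ``arithmetic of $H$'' does not force $r_0=0$: a subsheaf whose intersection with $Q$ has slope only slightly below $\mu_H(Q)$ would genuinely destabilize, and excluding it needs a uniform gap. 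That gap is exactly what the paper's two-stage argument supplies: first one shows that with respect to the movable class $\gamma_0=H_K^{2n-1}$ (against which $\delta$ pairs to zero) every proper subsheaf other than $Q$ has strictly negative slope while $\mu_{\gamma_0}(F_b)=0$; then one perturbs to $\gamma_0+\eps\beta$ with $\beta$ big and invokes the Grothendieck lemma, applied to the whole $\PP^1$-family at once, to show that only finitely many first Chern classes can ever destabilize and that each switches on only above a positive threshold in $\eps$, before finally transporting the statement from curve classes to ample classes. None of this is present in your sketch, and without it the claim fails.

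Two further points are deferred rather than proved. The stability of $Q$ itself (needed already in part (i) and as input to part (ii)) is not contained in \autoref{thm:sec2} as you suggest; it is a separate argument in the paper, transporting a destabilizing subsheaf through the functor $(\_)_P$ to a $\sym$-equivariant subsheaf of $\cO_{P_n(A)}^{\oplus n}$ and using the irreducibility of the standard representation $\rho$. And the equality $\ext^1(F_b,F_b)=2$ along the exceptional divisor, which you need for smoothness in part (iv), does not follow from the extension sequence together with the individual $\Ext$-groups alone: the long exact sequences only bound it, and pinning down the value requires showing that several Yoneda products with the extension class $\vartheta$ are injective or non-zero, which the paper establishes by explicit computations in $\wedge^*(V\otimes\rho)$. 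These two computations and the perturbation argument above are where the real work of the theorem lives.
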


 As far as we know, this is the first explicit example of such a component of moduli spaces of stable bundles on hyperk\"ahler varieties of arbitrary even dimension $2n \geq 4$.
 
In this paper we always assume that $n \geq 2$. In the case of $n=1$, the generalized Kummer variety $K_1(A)$ is the Kummer K3 surface associated to the abelian surface $A$. In this a case Wandel considered the tautological bundles associated to line bundles of degree $0$ in \cite[Example 3.8]{wandel_kummer}, and concluded that the moduli space of stable tautological bundles is precisely the Kummer surface associated to the dual abelian surface $\hA$. In other words, the rational map 
 	\begin{equation*}
 		\begin{tikzcd}
 			\hA \ar[r,dashed] & \cM_{K_1(A)}(\ch(\cO_A^{(1)}))
 		\end{tikzcd}
 	\end{equation*}
is generically a $2$-to-$1$ cover, which can be resolved by blowing up $\hA$ at the $16$ $2$-torsion points.
	
We could also ask whether a similar example can be constructed on the Hilbert scheme $S^{[n]}$ of a K3 surface $S$. Indeed, the tautological bundle $\cO_S^{[n]}$ associated to $\cO_S$ also splits as a direct sum of two stable bundles on $S^{[n]}$
$$ \cO_S^{[n]} = \cO_{S^{[n]}} \oplus Q. $$
However, unlike the case of abelian surfaces, the structure sheaf $\cO_S$ on a K3 surface $S$ is a rigid bundle, and we can find out that $\Ext^1(\cO_{S^{[n]}}, Q) = 0$, hence we cannot obtain a component of the moduli space of stable bundles on $S^{[n]}$ by any similar construction.

\subsection{Structure of the paper}

The main contents of the paper are organized in three sections. In \autoref{sec:extension}, we recall the construction of tautological bundles and compute the dimensions of some relevant extension groups, which will be used later for the dimension of various strata of moduli spaces.
The computations in \autoref{sec:extension} heavily use the derived McKay correspondence and hence equivariant sheaves.

In \autoref{sec:stability}, we establish the stability of the non-trivial extensions of $\cO_{\KA}$ by $Q$ as mentioned in \autoref{thm:main}.

Finally in \autoref{sec:component}, we apply an elementary transformation to construct a universal family of stable bundles parametrized by $\Bl_{o}\hA$, and show that it is a smooth connected component of the moduli space of stable bundles on $\KA$.

All objects in this text are defined over the field of complex numbers $\CC$.

\subsection*{Acknowledgement}

Z.~Zhang is supported by National Natural Science Foundation of China (Grant No.~12371046) and Science and Technology Commission of Shanghai Municipality (Grant No.~22JC1402700).

\section{Computation of extension groups}\label{sec:extension}

\subsection{Tautological bundles on Kummer varieties}
From now on, let $A$ always be an abelian surface.
The main object of interest in this paper is the tautological bundle $\cO_A^{(n)}$ on the generalized Kummer variety $\KA$ associated to the structure sheaf $\cO_A$ on the abelian surface $A$. We follow the notation in \cite{reede_stable_2022} and recall some of the most important facts used in the rest of this paper in this section.
 
\begin{definition}
For $n \geqslant 1$ we define the generalized Kummer variety $\KA$ by
\begin{equation*}
	\KA\coloneqq \mathrm{alb}^{-1}(0)\subset A^{[n+1]}
\end{equation*}
where $\mathrm{alb}: A^{[n+1]}\rightarrow A$ is the Albanese morphism of the Hilbert scheme of $n+1$-points in $A$.
\end{definition}

\begin{remark}
	By the results in \cite{beauville_83} the variety $\KA$ is a $2n$-dimensional hyperk\"ahler variety. The surface $K_1(A)$ is the classical Kummer K3 surface.
\end{remark}
 
 \begin{remark}
 It is well known that $\mathrm{alb}=m\circ \mathsf{HC}$ where 
 \[
m\colon A^{(n+1)}\rightarrow A\quad,\quad (a_1,\dots,a_{n+1})\mapsto \sum_{i=1}^{n+1}a_i 
 \]  is the addition morphism on the symmetric power and $\mathsf{HC}: A^{[n+1]}\rightarrow A^{(n+1)}$ is the Hilbert-Chow morphism; see \cite[Section 4.3.1]{yoshioka_01}.
 \end{remark}

The generalized Kummer variety $\KA$ comes with a universal family $\cZ \subset A \times \KA$ of length $n+1$ subschemes of $A$ whose weighted support sums up to $0$.
It has two natural projections 
\begin{equation}
	\label{projections_universal}
 \begin{tikzcd}
		A & \cZ \arrow[l, swap, "p"] \arrow[r,"q"] & \KA.
	\end{tikzcd}
\end{equation}

\begin{definition}
Let $E$ be a vector bundle on $A$. The associated tautological vector bundle on $\KA$ is given as
\begin{equation*}
	E^{(n)}\coloneqq q_{*}p^{*}E.
\end{equation*}
\end{definition}

\begin{remark}
	As $q:\cZ\rightarrow \KA$ is finite and flat of degree $n+1$ the (a priori only) coherent sheaf $E^{(n)}$ is indeed locally free on $\KA$ and we have
	\begin{equation*}
		\rk(E^{(n)})=(n+1)\rk(E).
	\end{equation*}
\end{remark}

The main goal of this section is to prove

\begin{theorem}\label{thm:sec2}
 The tautological bundle associated to $\cO_A$ decomposes as
	\begin{equation}\label{eqn:taut-split}
		\cO_A^{(n)}=\cO_{\KA}\oplus Q
	\end{equation}
	for some vector bundle $Q$ on $\KA$ with $\rk(Q)=n$. We have $\ext^1(\reg_{\KA}, Q)=h^1(Q)=2$. Hence, there is a two dimensional family of extensions of $\reg_{\KA}$ by $Q$. Let us fix one such non-trivial extension
 \begin{equation}\label{eq:Fses}
 0\to Q\to F\to \reg_{\KA}\to 0\,.
 \end{equation}
 Then $\ext^1(F,F)=2$.
\end{theorem}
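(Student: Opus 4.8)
The plan is to establish each claim in the theorem by passing through the derived McKay correspondence, which identifies $\Db(\KA)$ with the derived category of $\sym_{n+1}$-equivariant sheaves on a suitable variety (or, after the usual reductions for generalized Kummer varieties, on a fibre of an iterated construction over $A$). Concretely, I would first recall that the tautological construction $E \mapsto E^{(n)}$ corresponds on the equivariant side to a simple operation, so that $\cO_A^{(n)}$ becomes an explicit equivariant complex whose cohomology we can read off. The splitting $\cO_A^{(n)} = \cO_{\KA} \oplus Q$ should then follow from the fact that $q_* q^* \cO_{\KA} = q_* \cO_{\cZ}$ contains $\cO_{\KA}$ as a canonical direct summand (the trace/unit splitting for a finite flat morphism in characteristic $0$, using that $q$ has degree $n+1$ which is invertible), and $Q$ is defined as the complement; the rank count $\rk Q = n$ is then immediate. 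For the first cohomological statement, I would compute $h^i(\KA, Q)$ via $h^i(\KA, \cO_A^{(n)}) = h^i(\cZ, p^*\cO_A) = h^i(\KA,\cO_{\KA}) \oplus h^i(\KA,Q)$, using Leray for $q$ and the known Hodge numbers of $\KA$ (in particular $h^0(\cO_{\KA}) = 1$, $h^1(\cO_{\KA}) = 0$, $h^2(\cO_{\KA})=0$), together with a direct computation of $h^*(\cZ, p^*\cO_A)$; since $p$ is generically finite onto $A$ of degree related to the incidence, the answer $h^1(Q) = 2$ should come out of the cohomology of $\cO_A$ on the abelian surface (where $h^1(\cO_A) = 2$) propagated through the universal subscheme. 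This gives $\ext^1(\cO_{\KA}, Q) = h^1(\KA, Q) = 2$ since $\cO_{\KA}$ is the structure sheaf.

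For the final and main assertion $\ext^1(F,F) = 2$, I would apply $\Hom(-,F)$ and $\Hom(F,-)$ to the defining sequence \eqref{eq:Fses} and assemble the resulting long exact sequences into a computation of $\Ext^\bullet(F,F)$ in terms of the four groups $\Ext^\bullet(\cO_{\KA},\cO_{\KA})$, $\Ext^\bullet(\cO_{\KA},Q)$, $\Ext^\bullet(Q,\cO_{\KA})$, $\Ext^\bullet(Q,Q)$. The groups $\Ext^\bullet(\cO_{\KA},\cO_{\KA}) = H^\bullet(\cO_{\KA})$ are known from the Hodge theory of generalized Kummer varieties. The groups $\Ext^\bullet(\cO_{\KA},Q) = H^\bullet(Q)$ are computed in the previous paragraph. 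The remaining two, $\Ext^\bullet(Q,\cO_{\KA})$ and $\Ext^\bullet(Q,Q)$, I would compute again through the McKay/equivariant picture: writing $\cO_A^{(n)} = \cO_{\KA}\oplus Q$ one has $\Ext^\bullet(\cO_A^{(n)}, \cO_A^{(n)}) = \Ext^\bullet(\cZ$-theoretic data$)$ which by adjunction for the finite flat map $q$ reduces to a computation on $A$ of $\Ext^\bullet_A(\cO_A, \cO_A \otimes q_*\cO_{\cZ}\text{-twists})$, i.e. ultimately to cohomology of line bundles and their differences on the abelian surface $A$ — where everything is governed by the $\theta$-group / Appell–Humbert description and is completely explicit. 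Subtracting off the known summands isolates $\Ext^\bullet(Q,Q)$ and the off-diagonal terms; in particular I expect $\ext^1(Q,Q)$ and $\ext^1(\cO_{\KA},Q)$, $\ext^1(Q,\cO_{\KA})$, together with the obstruction/coboundary maps in the long exact sequences, to combine so that $\ext^1(F,F) = 2$.

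The step I expect to be the main obstacle is the careful bookkeeping in the two long exact sequences computing $\Ext^\bullet(F,F)$: one must control not just the dimensions of the eight relevant graded pieces but the ranks of the connecting homomorphisms, and in particular show that the two-dimensional space $\Ext^1(\cO_{\KA}, Q)$ does not get killed and that no unexpected contribution from $\Ext^2$ or from $\Ext^1(Q,Q)$ survives into $\Ext^1(F,F)$. The key input that makes this tractable is that $F$ is, by construction, the restriction to the exceptional-divisor direction of the \emph{universal} extension, so the coboundary map $\Ext^1(\cO_{\KA},\cO_{\KA}) \to \Ext^1(\cO_{\KA},Q)$ (cup product with the extension class) is controlled, and by semicontinuity it suffices to verify the count for one sufficiently general non-trivial extension class; genericity can be used to maximize the rank of the relevant multiplication maps. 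Alternatively, one can avoid some of this by computing $\chi(F,F)$ by Riemann–Roch on $\KA$ (which is purely numerical and insensitive to the choice of extension), determining $\ext^0(F,F) = 1$ from stability (to be proved in \autoref{sec:stability}, or from the fact that $F$ is a non-split extension of stable bundles of distinct slopes hence simple), and pinning down $\ext^i(F,F)$ for $i \geq 2$ separately, so that $\ext^1(F,F)$ falls out; I would likely present whichever of these two routes produces the cleanest argument, with the McKay-side computation as the engine.
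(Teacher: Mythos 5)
Your overall architecture --- McKay correspondence, trace splitting of $q_*\reg_\cZ$, and long exact sequences from the defining extension --- matches the paper's, and the first two claims are within reach of your sketch. But the heart of the theorem is $\ext^1(F,F)=2$, and there you have correctly located the obstacle (controlling the ranks of the connecting homomorphisms, which are Yoneda products with the extension class $\vartheta$) without actually overcoming it. The missing idea is a concrete model in which these products become computable. The paper transports everything to $\sym_{n+1}$-equivariant sheaves on $P=\{\sum a_i=0\}\subset A^{n+1}$, where $Q$ becomes $\reg_P\otimes\rho$ (with $\rho$ the standard representation) and, crucially, $\Ext^*(\reg_P\otimes U,\reg_P\otimes W)\cong \wedge^*(V\otimes\rho)\otimes\Hom(U,W)$ with $V=\Ho^1(\reg_A)$, compatibly with Yoneda products. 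The class $\vartheta$ is then the explicit invariant tensor $\sum_i ve_i\otimes e_i$, and the needed injectivity and non-vanishing statements in low degrees (for instance $\vartheta_*\colon\Ho^2(\reg_{\KA})\to\Ho^3(Q)$, $\vartheta_*\colon\Ext^1(Q,\reg_{\KA})\to\Ext^2(Q,Q)$, and $\vartheta^*\colon\Ext^1(Q,F)\to\Ho^2(F)$) are verified by expanding in an explicit basis of the exterior algebra and counting coefficients. Without some such explicit computation your long exact sequences remain underdetermined.

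Two of your proposed shortcuts do not close this gap. First, semicontinuity plus a computation for a \emph{generic} extension class only bounds $\ext^1(F,F)$ from below at special classes; the theorem (and its later use, where every fibre over the exceptional divisor must be a smooth point of the moduli space) requires equality for \emph{every} non-zero class, and there is no evident symmetry acting transitively on $\PP(\Ext^1(\reg_{\KA},Q))$ to reduce to the generic case --- the paper's computation instead works verbatim for an arbitrary $0\neq\vartheta$. Second, the Riemann--Roch route requires pinning down $\ext^i(F,F)$ for all $2\le i\le 2n-2$, which is no easier than the degree-one computation; the paper deliberately uses only the low-degree portion of the diagram so that the unknown higher groups never enter. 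Two minor slips: $p\colon\cZ\to A$ is not generically finite (its fibres have dimension $2n-2$), so the proposed Leray computation of $h^*(\cZ,p^*\reg_A)$ through $p$ does not work as stated; the correct mechanism is that all fibres of the summation map $A^n\to A$ are equivariantly isomorphic to $P_{n-1}$, giving $\Ho^*(E^{(n)})\cong\Ho^*(E)\otimes\Ho^*(\reg_{K_{n-1}(A)})$ and hence $h^1(Q)=h^1(\reg_A)=2$. Also $h^2(\reg_{\KA})=1$, not $0$.
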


We will heavily use the derived McKay correspondence and equivariant methods, so we start with some preliminaries on this. Note, however, that the splitting \eqref{eqn:taut-split} can easily be proved without the McKay correspondence using that $\reg_A^{(n)}\cong q_*\reg_\cZ$ is the push forward of the structure sheaf of the universal family along the flat and finite morphism $q$ together with \cite[Proposition 5.7]{kollar_birational_1998}.

\subsection{Equivariant sheaves and categories}

For details on equivariant sheaves and their derived categories and functors, we refer to \cite[Section 4]{BKR}, \cite{Elagin--onequi}, \cite[Section 2.2]{krug_remarks_2018}, and \cite{BO--equivariant}. Here, we only sketch the basic definitions and collect some facts that we need later. 

Let $G$ be a finite group acting on a smooth variety $X$. A \emph{$G$-equivariant sheaf} on $X$ is a coherent sheaf $E\in \Coh(X)$ together with a \emph{$G$-linearization}, i.e.\ a family $\{\lambda_g\colon E\xrightarrow\sim g^*E\}_{g\in G}$ of isomorphims satisfying that, for every $g,h\in G$, the composition 
\[
E\xrightarrow{\lambda_g}g^*E\xrightarrow{g^*\lambda_h}g^*h^*E\cong (gh)^*E
\]
equals $\lambda_{gh}$. In particular, $\lambda_e=\id_E$ where $e\in G$ is the neutral element. The $G$-equivariant sheaves form an abelian category, denoted by $\Coh_G(X)$. Its bounded derived category $D^b_G(X)\coloneqq D^b(\Coh_G(X))$ is called the \emph{equivariant (bounded) derived category of $X$}.

Let $H\le G$ be a subgroup. There is the \emph{restriction} functor $\Res_G^H\colon \Coh_G(X)\to \Coh_H(X)$, $(E,\lambda)\mapsto (E,\lambda_{\mid H})$ which restricts the $G$-linearization of an equivariant sheaf to an $H$-linearization. In the special case $H=1$, the restriction functor is isomorphic to the forgetful functor which drops the linearization completely.
There is the \emph{induction} functor $\Ind_H^G\colon \Coh_H(X)\to \Coh_G(X)$ which is left and right adjoint to the restriction functor:
\[
\Res_G^H\vdash \Ind_H^G\vdash \Res_G^H\,.
\]
On the underlying sheaves, it is given by 
\begin{equation}\label{eq:Ind}
\Ind_H^G(E)=\bigoplus_{[g]\in G/H} g^*E
\end{equation}
where the sum runs through some set of representatives of the cosets. The $G$-linearization of $\Ind_H^G(E)$ is then given by a combination of the $H$-linearization of $E$ and permutation of the direct summands in \eqref{eq:Ind}; see e.g.\ \cite[Section 3.2]{BO--equivariant} for details. 
The restriction and the induction functor are both exact and hence also induce an adjoint pair of functors between the equivariant derived categories. 

For two $G$-equivariant sheaves $E,F\in \Coh_G(X)$, their linearizations induce, for every $i\in \ZZ$, an action on the extension space 
\[
\Ext^i(E,F)\coloneqq \Ext^i\bigl(\Res_G^1 E, \Res_G^1 F\bigr) \cong \Hom_{D^b(X)} (\Res_G^1 E, \Res_G^1 F[i]\bigr)
\]
The homomorphisms in the equivariant derived category are then given by the invariants under this action:
\begin{equation}\label{eq:Hominva}
\Hom_{D^b_G(X)}(E,F[i])\cong \Ext^i(E,F)^G=:\Ext^i_G(E,F)\,.
\end{equation}
The structure sheaf $\reg_X$ has a canonical $G$-linearization given by push-forward of regular functions along the group action on $X$. Taking $E=\reg_X$ in \eqref{eq:Hominva} gives
\begin{equation}\label{eq:Hinva}
\Hom_{D^b_G(X)}(\reg_X,F[i])\cong \Ext^i(\reg_X,F)^G\cong\Ho^i(F)^G=:\Ho^i_G(F)\,.
\end{equation}

The tensor product of two $G$-equivariant sheaves on $X$ has a naturally induced $G$-linearization, which means that there is a functor
\[
\_\otimes \_\colon \Coh_G(X)\times \Coh_G(X)\to \Coh_G(X)\,.
\]
Let $f\colon X\to Y$ be a proper $G$-equivariant morphism. Pull-backs and push-forwards of $G$-equivariant bundles under $f$ then have a naturally induced $G$-linearization, which means that we have functors
\[
f_*\colon \Coh_G(X)\to \Coh_G(Y)\quad,\quad f^*\colon \Coh_G(Y)\to \Coh_G(X)\,.
\]
All these functors can be derived to give functors between the equivariant derived categories.

The group $G$ acts trivially on the point $\Spec \CC$. The category $\Coh_G(\Spec\CC)$ is equivalent to the category $\rep(G)$ of finite-dimensional $G$ representation. Hence, for any 
$E\in \Coh_G(X)$ and $\rho\in \rep(G)$, we can define a tensor product
\[
E\otimes \rho\coloneqq E\otimes f^*\rho
\]
where $f\colon X\to \Spec \CC$.

\subsection{The derived McKay correspondence for the generalized Kummer variety}

To explain the derived McKay correspondence for the generalized Kummer variety and the image of tautological bundles under it, let us fix some notation. We write $\sym\coloneqq \sym_{n+1}$ for the permutation group of the numbers $1,\dots,n+1$. It acts by permutation of the factors on $A^{n+1}$. We consider the subvariety 
\[
P\coloneqq P_n\coloneqq \PA\coloneqq \bigl\{ (a_1,\dots, a_{n+1})\mid \sum_{i=1}^{n+1} a_i=0\bigr\}\subset A^{n+1}\,.
\]
This subvariety is preserved by the $\sym$-action on $A^{n+1}$. Hence, by restriction, we get an induced $\sym$-action on $P$. Note that there is a (non $\sym$-equivariant) isomorphism  
\begin{equation}\label{eq:Piso} h\colon A^n\xrightarrow\cong P\quad,\quad(a_1,\dots,a_n)\mapsto (-a_1,\dots, -a_n, \sum_{i=1}^n a_i)\,.\end{equation}
For $i=1,\dots, n+1$, we denote restriction of the projection $p_i\colon A^{n+1}\to A$ to the $i$-th factor to the subvariety $P\subset A^{n+1}$ by
\[
\overline p_i\coloneqq p_{i\mid P}\colon P\to A\,.
\]

\begin{proposition}\label{prop:tautMcKay}
There is an equivalence $\Psi\colon D_\sym(P)\xrightarrow\cong D(\KA)$ satisfying 
$\Psi(\reg_{P})\cong \reg_{\KA}$ and 
\[
\Psi(E^{\{n\}})\cong E^{(n)}\quad\text{where}\quad   E^{\{n\}}\coloneqq \Ind_{\mathfrak{S}_n}^{\mathfrak{S}_{n+1}}(\overline{p}_{n+1}^{*}E)
\]
\end{proposition}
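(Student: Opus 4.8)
The plan is to produce $\Psi$ by restricting the derived McKay correspondence for the Hilbert scheme $A^{[n+1]}$ to the fibre of the Albanese map over $0\in A$, and then to read off the images of $\reg_P$ and $E^{\{n\}}$ from the known behaviour of tautological bundles on $A^{[n+1]}$. Recall that $A^{[n+1]}$ is the $\sym$-equivariant Hilbert scheme of $A^{n+1}$ (Haiman, a special case of \cite{BKR}), so the Fourier--Mukai transform $\Phi=\Phi_{\reg_Z}\colon D^b_\sym(A^{n+1})\xrightarrow{\sim}D^b(A^{[n+1]})$ with kernel the structure sheaf of the isospectral Hilbert scheme $Z\subset A^{[n+1]}\times A^{n+1}$, carrying its natural $\sym$-linearization on the second factor, is an equivalence. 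Two properties of $Z$ will be used: it is Cohen--Macaulay, even Gorenstein (Haiman), and it lies in the fibre product $A^{[n+1]}\times_A A^{n+1}$ taken over $\mathrm{alb}$ and the summation morphism $s\colon A^{n+1}\to A$, $s(a_1,\dots,a_{n+1})=\sum_i a_i$ --- indeed $\mathrm{alb}=m\circ\mathsf{HC}$ reads off exactly the point that $s$ does. Both $\mathrm{alb}$ (an isotrivial fibration with fibre $\KA$) and $s$ (a surjective homomorphism of abelian varieties) are smooth, and the common composite $w\colon Z\to A$ is a morphism from a Cohen--Macaulay scheme to a smooth surface all of whose fibres are $2n$-dimensional (they are translates of one another under the $A$-action), hence flat.

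Next I would restrict everything to the fibre over $0$. Write $i_0\colon\KA\hookrightarrow A^{[n+1]}$ and $j_0\colon P\hookrightarrow A^{n+1}$ for the embeddings of the $0$-fibres, and set $Z_0\coloneqq w^{-1}(0)\subset\KA\times P$. Flatness of $w$ gives $L\iota^*\reg_Z\cong\reg_{Z_0}$ for the embedding $\iota\colon\KA\times P=(A^{[n+1]}\times_A A^{n+1})\times_A\{0\}\hookrightarrow A^{[n+1]}\times A^{n+1}$, so the restricted transform is $\Psi\coloneqq\Phi_{\reg_{Z_0}}\colon D^b_\sym(P)\to D^b(\KA)$. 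To see that $\Psi$ is an equivalence, note that $\omega_{A^{[n+1]}}$ and $\omega_{\KA}$ are trivial and $Z$ (hence $Z_0$) is Gorenstein, so forming the adjoint kernel commutes with restriction; and since $\{0\}\hookrightarrow A$ is a regular immersion against which all the morphisms in sight are flat, the convolutions of kernels witnessing $\Phi^R\circ\Phi\cong\id$ and $\Phi\circ\Phi^R\cong\id$ base-change to the corresponding statements for $\Psi$. The $\sym$-linearizations cause no trouble since $\sym$ acts trivially on $A$. The same Tor-independence yields the base-change identity $\Psi\circ Lj_0^*\cong Li_0^*\circ\Phi$ on objects.

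It then remains to compute the two images. First $\reg_P=j_0^*\reg_{A^{n+1}}$ and $\Phi(\reg_{A^{n+1}})\cong\reg_{A^{[n+1]}}$, whence $\Psi(\reg_P)\cong i_0^*\reg_{A^{[n+1]}}=\reg_{\KA}$. Second, $\overline{p}_{n+1}=p_{n+1}\circ j_0$ with $j_0$ being $\sym$-equivariant and $P\subset A^{n+1}$ a local complete intersection, so induction commutes with $j_0^*$ and $E^{\{n\}}=\Ind_{\sym_n}^{\sym_{n+1}}(\overline{p}_{n+1}^*E)=j_0^*\bigl(\Ind_{\sym_n}^{\sym_{n+1}}(p_{n+1}^*E)\bigr)$. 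By the computation of Scala (see also \cite{krug_remarks_2018}), $\Phi$ sends the equivariant sheaf $\Ind_{\sym_n}^{\sym_{n+1}}(p_{n+1}^*E)\cong\bigoplus_{i=1}^{n+1}p_i^*E$, with permutation linearization, to the tautological bundle associated to $E$ on $A^{[n+1]}$; restricting and using that the universal family $\cZ$ on $\KA$ is cut out of the one on $A^{[n+1]}$ by flat base change, we obtain $\Psi(E^{\{n\}})\cong i_0^*\bigl(\text{taut.\ bundle on }A^{[n+1]}\bigr)\cong E^{(n)}$.

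The step I expect to be the real obstacle is the second paragraph: justifying that restriction to the Albanese fibre turns an equivalence into an equivalence. Everything hinges on Haiman's theorem that the isospectral Hilbert scheme is Cohen--Macaulay, which both forces $w$ to be flat --- so that no derived artefacts appear when restricting the kernel --- and, together with the triviality of the canonical bundles, makes the composition of Fourier--Mukai kernels behave well under base change along $\{0\}\hookrightarrow A$. A cleaner-sounding but ultimately not shorter alternative would be to check directly that $\KA$ is the $\sym$-Hilbert scheme of $P$ and that $\omega_P$ is $\sym$-equivariantly trivial (which follows since the normal bundle of $P$ in $A^{n+1}$ is $s^*T_0A$ with trivial $\sym$-action), and then to invoke \cite{BKR} for $P$ itself --- but one would still have to verify the dimension hypothesis of loc.\ cit., which again reduces to the corresponding fact for $A^{[n+1]}$ via the fibration structure.
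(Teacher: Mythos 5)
Your proposal is correct and takes essentially the same route as the paper, whose proof is simply a citation of \cite[Remark 3.14]{krug_remarks_2018}: that remark is itself obtained exactly as you outline, by restricting the derived McKay correspondence for $A^{[n+1]}$ to the Albanese fibre using Haiman's Cohen--Macaulayness of the isospectral Hilbert scheme, miracle flatness over $A$, and base change of the kernels. One small point of care: it is the equivalence in the direction $D_\sym(A^{n+1})\to D(A^{[n+1]})$, suitably normalized as in \cite{krug_remarks_2018}, that sends $\bigoplus_i p_i^*E$ with its permutation linearization to the tautological bundle on the nose (Scala's computation of the image of $E^{[n]}$ in the opposite direction yields a genuine complex, not $\bigoplus_i p_i^*E$) --- you use the correct direction, but the attribution should point to Krug rather than Scala.
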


\begin{proof}
This is \cite[Remark 3.14]{krug_remarks_2018}. Note that, in the notation of \emph{loc.\ cit.}, $\Psi$ is $\Psi_K$, $P_nA$ is $N_nA$, $E^{\{n\}}$ is $j^*\mathsf C(E)$, and $\reg_P$ is $j^*\mathsf W^0(\reg_A)$.    
\end{proof}

Indeed, for our purposes, the equivalence $\Psi$ of \cite{krug_remarks_2018} is better suited than the original derived McKay correspondence of \cite{BKR} and \cite{Hai}.

\begin{remark}\label{rem:Psicoh}
Note that, for every object $N\in D_\sym(P)$, we have $\Ho^*_\sym(N)\cong \Ho^*(\Psi(N))$. The reason is that
\begin{align*}
\Ho^i_\sym(N) &\cong \Hom_{D^b_\sym(P)}\bigl(\reg_P,N[i] \bigr)\tag{by \eqref{eq:Hinva}}\\     
&\cong \Hom_{D^b(\KA)}\bigl(\Psi(\reg_P),\Psi(N)[i]\bigr)\tag{$\Psi$ is an equivalence}\\
&\cong \Hom_{D^b(\KA)}\bigl(\reg_{\KA},\Psi(N)[i]\bigr)\tag{$\Psi(\reg_P)\cong \reg_{\KA}$}\\
&\cong \Ho^i(\Psi(N))\,.
\end{align*}
\end{remark}

\begin{proposition}[{compare \cite[Theorem 6.7]{meachan_derived_2015}}]\label{prop:Mea}
We have
\begin{equation}
\mathrm{H}^{*}_{\sym}(\reg_{P})\cong \Ho^*(\reg_{\KA})\cong \CC\oplus \CC[-2]\oplus \dots\oplus \CC[-2n]\cong \frac{\CC[\alpha]}{(\alpha^{n+1})}\,, \quad \deg(\alpha)=2\,.\label{eq:OH}    
\end{equation}
Furthermore, for any coherent sheaf $E$ on $A$, we have
\begin{align}
&\Ho^*_{\sym}(E^{\{n\}})\cong \Ho^*(E)\otimes \Ho^*_{\sym_n}(\reg_{P_{n-1}})\,, \label{eq:tautH}\\
&\Ext^*_{\sym}(E^{\{n\}}, \reg_P)\cong \Ho^*(E^\vee)\otimes \Ho^*_{\sym_n}(\reg_{P_{n-1}})\,, \label{eq:tautHdual}\\
&\Ext^*_{\sym}(E^{\{n\}}, F^{\{n\}})\cong \begin{matrix}\bigl(\Ext^*(E,F)\otimes \Ho^*_{\sym_n}(\reg_{P_{n-1}})\bigr)\\\oplus \bigl(\Ho^*(E^\vee)\otimes \Ho^*(F)\otimes \Ho^*_{\sym_{n-1}}(\reg_{P_{n-2}})\bigr)\end{matrix}\,.\label{eq:tautExt}
\end{align}    
\end{proposition}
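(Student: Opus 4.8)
The plan is to reduce every \emph{equivariant} (co)homology group on $P$ to \emph{ordinary} sheaf cohomology on products of copies of $A$, by stripping the induction functor off $E^{\{n\}}=\Ind_{\sym_n}^{\sym_{n+1}}(\overline p_{n+1}^*E)$ using the adjunctions $\Res_{\sym_{n+1}}^{\sym_n}\vdash\Ind_{\sym_n}^{\sym_{n+1}}\vdash\Res_{\sym_{n+1}}^{\sym_n}$, and then evaluating the resulting cohomology along the summation maps of $A$. Throughout I use $\Ho^i_\sym(N)\cong\Hom_{D^b_\sym(P)}(\reg_P,N[i])$, the analogue for $\Ext$, and the isomorphism $h$ of \eqref{eq:Piso}, under which $\sym_n$ (the stabiliser of $n+1$) acts by permutation on $A^n$ and $\overline p_{n+1}\circ h$ becomes the summation map $m\colon A^n\to A$. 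The ring \eqref{eq:OH} itself is immediate from the McKay correspondence: by \autoref{rem:Psicoh} applied to $N=\reg_P$ we have $\Ho^*_\sym(\reg_P)\cong\Ho^*(\reg_{\KA})$, and since $\KA$ is an irreducible holomorphic symplectic manifold of dimension $2n$ (Beauville), $\Ho^i(\reg_{\KA})=\Ho^{0,i}(\KA)$ is one-dimensional for $i=0,2,\dots,2n$ and vanishes otherwise, with the ring structure generated by the class of the (anti-holomorphic) symplectic form; this gives $\Ho^*(\reg_{\KA})\cong\CC[\alpha]/(\alpha^{n+1})$, $\deg\alpha=2$.

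The analytic heart is one computation on abelian varieties: for a surjective homomorphism $m\colon B\to A$ with kernel $K$, the (non-canonical) splitting $B\cong A\times K$ together with the projection formula gives $Rm_*\reg_B\cong\reg_A\otimes_\CC\Ho^*(K,\reg_K)$ in $D^b(A)$, and this identification is equivariant for any finite group acting on $B$ by automorphisms fixing $m$ (hence acting on the fibre $K$): indeed each $R^qm_*\reg_B$ is a trivial bundle, so the canonical evaluation $\Ho^0(A,R^qm_*\reg_B)\otimes\reg_A\to R^qm_*\reg_B$ is an equivariant isomorphism, and proper base change identifies $\Ho^0$ with the fibre $\Ho^q(K,\reg_K)$ equivariantly. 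Feeding $m^*E\cong m^*E\otimes\reg_B$ through the projection formula and Künneth yields, for a bundle $E$ on $A$,
\[ \Ho^*(A^n,m^*E)\cong\Ho^*(A,E)\otimes_\CC\Ho^*(P_{n-1},\reg_{P_{n-1}}) \]
as $\sym_n$-representations, with $\sym_n$ acting only on the fibre factor (the fibre of $m$ over $0$ being $P_{n-1}$). Formulas \eqref{eq:tautH} and \eqref{eq:tautHdual} now each require a single adjunction: since $\Res_{\sym_{n+1}}^{\sym_n}\reg_P\cong\reg_P$, we get $\Ho^i_\sym(E^{\{n\}})\cong\Ho^i_{\sym_n}(\overline p_{n+1}^*E)$ and $\Ext^i_\sym(E^{\{n\}},\reg_P)\cong\Ext^i_{\sym_n}(\overline p_{n+1}^*E,\reg_P)$, and transporting along $h$ and taking $\sym_n$-invariants of the displayed formula (applied to $E$, resp. to $E^\vee$) produces both identities.

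For \eqref{eq:tautExt} I would first compute $\Res_{\sym_{n+1}}^{\sym_n}\Ind_{\sym_n}^{\sym_{n+1}}(\overline p_{n+1}^*F)$ by the geometric Mackey formula. Writing $\Ind_{\sym_n}^{\sym_{n+1}}(\overline p_{n+1}^*F)\cong\bigoplus_{j=1}^{n+1}\overline p_j^*F$, with $\sym_{n+1}$ permuting the summands through $g^*\overline p_j^*F=\overline p_{g^{-1}(j)}^*F$, and splitting $\{1,\dots,n+1\}$ into the two $\sym_n$-orbits $\{n+1\}$ and $\{1,\dots,n\}$, gives
\[ \Res_{\sym_{n+1}}^{\sym_n}\Ind_{\sym_n}^{\sym_{n+1}}(\overline p_{n+1}^*F)\cong\overline p_{n+1}^*F\ \oplus\ \Ind_{\sym_{n-1}}^{\sym_n}(\overline p_n^*F). \]
After adjunction $\Ext^i_\sym(E^{\{n\}},F^{\{n\}})$ splits accordingly. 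The diagonal term $\Ext^i_{\sym_n}(\overline p_{n+1}^*E,\overline p_{n+1}^*F)$ is the displayed formula applied to $E^\vee\otimes F$, giving $\Ext^*(E,F)\otimes\Ho^*_{\sym_n}(\reg_{P_{n-1}})$. For the off-diagonal term a further adjunction gives $\Ext^i_{\sym_{n-1}}(\overline p_{n+1}^*E,\overline p_n^*F)$; since $\overline p_{n+1}\circ h=m$ and $\overline p_n\circ h=[-1]\circ\pr_n$, this is the $\sym_{n-1}$-invariant part of $\Ho^*(A^n,m^*E^\vee\otimes\pr_n^*[-1]^*F)$, which I evaluate by applying the abelian-variety computation to $\mu=(m,\pr_n)\colon A^n\to A\times A$, a surjective homomorphism with kernel $\cong P_{n-2}$. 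Künneth on $A\times A$ and $\Ho^*(A,[-1]^*F)\cong\Ho^*(A,F)$ then produce $\Ho^*(E^\vee)\otimes\Ho^*(F)\otimes\Ho^*_{\sym_{n-1}}(\reg_{P_{n-2}})$, the second summand of \eqref{eq:tautExt}.

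The main obstacle I anticipate is the Mackey step and its bookkeeping: identifying $\Ind_{\sym_n}^{\sym_{n+1}}(\overline p_{n+1}^*F)$ with $\bigoplus_j\overline p_j^*F$ as equivariant sheaves, and then tracking precisely which coordinate projection (and the inversion $[-1]$ attached to the last one under $h$) each $\sym_n$-orbit contributes. A secondary but genuine point is to verify that the identification $Rm_*\reg_B\cong\reg_A\otimes_\CC\Ho^*(K,\reg_K)$ is truly equivariant, even though the splitting $B\cong A\times K$ used to establish it is not; this is why I route the equivariance through the canonical evaluation map and proper base change rather than through any chosen section.
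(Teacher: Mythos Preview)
Your proposal is correct and follows the same strategy as the paper: the McKay correspondence plus hyperkähler Hodge numbers for \eqref{eq:OH}; the $\Res\vdash\Ind$ adjunction to strip off the induction, transport along $h$, and then the push-forward computation along the summation map for \eqref{eq:tautH} and \eqref{eq:tautHdual}; and an orbit decomposition for \eqref{eq:tautExt}. The only cosmetic difference is that for \eqref{eq:tautExt} the paper expands both inductions into the double sum $\bigoplus_{i,j}\Ext^*(\overline p_i^*E,\overline p_j^*F)$ and then applies Danila's Lemma to the $\sym_{n+1}$-action on pairs $(i,j)$, whereas you apply adjunction once and then invoke the Mackey decomposition for $\Res_{\sym_{n+1}}^{\sym_n}\Ind_{\sym_n}^{\sym_{n+1}}$; the two procedures produce literally the same two summands $\Ext^*_{\sym_n}(\overline p_{n+1}^*E,\overline p_{n+1}^*F)$ and $\Ext^*_{\sym_{n-1}}(\overline p_{n+1}^*E,\overline p_n^*F)$, and your computation of the off-diagonal term via the map $(m,\pr_n)\colon A^n\to A\times A$ with kernel $P_{n-2}$ is exactly what the paper's citation of \cite[Corollary 6.8]{meachan_derived_2015} amounts to. Your explicit handling of the equivariance of $Rm_*\reg_B\cong\reg_A\otimes\Ho^*(K,\reg_K)$ via the canonical evaluation map is a nice touch that the paper simply outsources to \cite{meachan_derived_2015}.
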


\begin{proof}
Formula \eqref{eq:OH} follows from $\Psi(\reg_P)\cong  \reg_{\KA}$ together with the fact that $\KA$ is hyperkähler. Note that one can also compute the equivariant cohomology $\mathrm{H}^{*}_{\sym}(\reg_{P})$ directly, without using the equivalence $\Psi$, by combining \autoref{lem:repdescription}(1) below with \cite[Lemma B.6(1)]{Scala--Coh}.

The existence of the formulas \eqref{eq:tautH}, \eqref{eq:tautHdual}, and \eqref{eq:tautExt} is alluded to in \cite[Remmark 4.4]{krug_remarks_2018}. 
Closely related formulas are provided in \cite[Theorem 6.7]{meachan_derived_2015}. However, none of both references gives exactly the formulas we need. Hence, let us sketch the proofs.  
By the adjunction $\Res_{\sym_{n+1}}^{\sym_n}\vdash \Ind_{\sym_n}^{\sym_{n+1}}$, we have
\begin{align*}
\Ho^*_\sym(E^{\{n\}})\cong \Ext^*_\sym(\reg_P, \Ind_{\sym_n}^{\sym_{n+1}} \overline p_{n+1}^* E)&\cong 
\Ho^*_{\sym}(\Ind_{\sym_n}^{\sym_{n+1}} \overline p_{n+1}^* E)\\
&\cong 
\Ho^*_{\sym_{n}}(\overline p_{n+1}^* E)\,.
\end{align*}
Now note that the isomorphism $h\colon A^n\xrightarrow\cong P$ defined in \eqref{eq:Piso}, while not being $\sym_{n+1}$-equivariant, is $\sym_n$-invariant. Furthermore, $\overline p_{n+1}\circ h=\Sigma_n$ where
\[
\Sigma_n\colon A^n\to A\quad,\quad (a_1,\dots,a_n)\mapsto \sum_{i=1}^n a_i
\]
is the summation map. Hence, we can further compute
\[
\Ho^*_{\sym_{n}}(P,\overline p_{n+1}^* E)\cong \Ho^*_{\sym_{n}}(A^n,\Sigma_n^* E)\cong \Ho^*(A,R\Sigma_{n*}^{\sym_n}\Sigma_n^* E)
\]
where $\Sigma_{n*}^{\sym_n}=(\_)^{\sym_n}\circ \Sigma_{n*}$ denotes the push-forward along the $\sym_n$-invariant morphism $\Sigma_n$ followed by the functor of taking invariants, and $R\Sigma_{n*}^{\sym_n}\colon D^b_{\sym_n}(A^n)\to D^b(A)$ is its right-derived functor.
Now, \eqref{eq:tautH} follows from the isomorphism 
\[
R\Sigma_{n*}\Sigma_n^* E\cong E\otimes\Ho^*(\reg_{K_{n-1}A})\cong E\otimes \Ho^*_{\sym_n}(\reg_{P_{n-1}})\,;
\]
see \cite[Corollary 6.5]{meachan_derived_2015} (the essential reason for this isomorphism is that the fibres of $\Sigma_n$ are all $\sym_n$-equivariantly isomorphic to $P_{n-1}$).

The proof of \eqref{eq:tautHdual} is very similar, using the adjunction $\Ind_{\sym_n}^{\sym_{n+1}} \vdash \Res_{\sym_{n+1}}^{\sym_n}$ in the other direction.

For \eqref{eq:tautExt}, we first note that 
\[
\Ext^*_\sym(E^{\{n\}},F^{\{n\}})\cong \Ext^*(E^{\{n\}},F^{\{n\}})^{\sym}\cong\bigl[ \bigoplus_{i,j=1}^{n+1} \Ext^*(\overline p_i E, \overline p_j F)\bigr]^\sym
\]
By what is known as Danila's Lemma (see e.g.\ \cite[Lemma 2.2]{Dan}, \cite[Remark 2.4.2]{Scala--Coh}), the $\sym$-invariants of the double sum can be computed as
\begin{equation}\label{eq:Danila}
\bigl[ \bigoplus_{i,j=1}^{n+1} \Ext^*(\overline p_i^* E, \overline p_j^* F)\bigr]^\sym
\cong 
\Ext^*(\overline p_{n+1}^* E, \overline p_{n+1}^* F)^{\sym_n}\oplus \Ext^*(\overline p_{n+1}^* E, \overline p_{n}^* F)^{\sym_{n-1}}\,;    
\end{equation}
compare e.g.\ the proof of \cite[Proposition 4.1]{krug_remarks_2018}. Now, computations similar to those performed in the proof of \eqref{eq:tautH} above identify the two direct summands of the right side of \eqref{eq:Danila} with the two direct summands of the right side of \eqref{eq:tautExt} (for the second summand, one uses \cite[Corollary 6.8]{meachan_derived_2015} in place of \cite[Corollary 6.5]{meachan_derived_2015}).
\end{proof}

\subsection{The splitting of the tautological bundle}

Let $R$ be the \emph{permutation representation} of $\sym=\sym_{n+1}$, i.e.\ the $n+1$-dimensional representation $R=\CC^{n+1}$ such that $\sym$ acts on the canonical basis $\eps_1,\dots,\eps_{n+1}$ by 
$\sigma\cdot \eps_i=\eps_{\sigma(i)}$. It splits as 
\begin{equation}\label{eq:Psplit}
R\cong \CC\oplus \rho
\end{equation}
where $\CC$ stands for the one-dimensional trivial representation (embedded diagonally into $R$ as the invariant subspace), and $\rho$ is the $n$-dimensional \emph{standard representation}. The latter can be realised as the subspace of $P$ of vectors $v=(v_1,\dots,v_{n+1})$ of $P$ with $\sum v_i=0$. Equivalently, and this is the definition that we will use in the following, it is the quotient
\[
\rho=R/\CC=\CC^{n+1}/\langle (1,\dots,1)\rangle\,.
\]
Setting $e_i=\overline \eps_i$, we have the basis $e_1,\dots,e_n$ of $\rho$. Note that
\begin{equation}\label{eq:en+1}
e_{n+1}=-\sum_{i=1}^n e_i\,.
\end{equation}

\begin{lemma}\label{lem:split}
We have an isomorphism of $\sym$-equivariant sheaves
\[
(\reg_A)^{\{n\}}\cong \reg_P\otimes R\,.
\]
In particular, writing $\reg^{\{n\}}\coloneqq (\reg_A)^{\{n\}}$ and $G\coloneqq \reg_P\otimes \rho$, we have a splitting
\begin{equation}\label{eq:splitOn}
\reg^{\{n\}}\cong \reg_P\oplus G\,.
\end{equation}
\end{lemma}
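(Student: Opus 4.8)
The plan is to unwind the definition $\reg^{\{n\}}=\Ind_{\sym_n}^{\sym_{n+1}}(\overline{p}_{n+1}^{*}\reg_A)$ and show directly that the underlying sheaf is $\reg_P$ equipped with the permutation $\sym_{n+1}$-linearization, i.e.\ $\reg_P\otimes R$; the asserted splitting then follows at once by tensoring the decomposition $R\cong\CC\oplus\rho$ of \eqref{eq:Psplit} with $\reg_P$.

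First I would identify the $\sym_n$-equivariant sheaf $\overline{p}_{n+1}^{*}\reg_A$. The projection $\overline{p}_{n+1}\colon P\to A$ is invariant under the subgroup $\sym_n\le\sym_{n+1}$ fixing the index $n+1$: for $\sigma\in\sym_n$ the $(n+1)$-st coordinate of $\sigma\cdot(a_1,\dots,a_{n+1})$ is $a_{\sigma^{-1}(n+1)}=a_{n+1}$. Hence, if $\pi_P\colon P\to\Spec\CC$ denotes the structure morphism (equivariant for the trivial action on a point), we have $\overline{p}_{n+1}^{*}\reg_A\cong\pi_P^{*}\CC$ as $\sym_n$-equivariant sheaves, i.e.\ $\reg_P$ with its canonical $\sym_n$-linearization.

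Next I would use that induction commutes with pull-back along an equivariant morphism to a point. For $V\in\rep(\sym_n)$ the underlying sheaf of $\Ind_{\sym_n}^{\sym_{n+1}}(\pi_P^{*}V)$ is, by \eqref{eq:Ind}, $\bigoplus_{[g]\in\sym_{n+1}/\sym_n}g^{*}\pi_P^{*}V$; since $\pi_P\circ g=\pi_P$ each summand is canonically $\pi_P^{*}V$, and the permutation of summands comprising the $\sym_{n+1}$-linearization is exactly $\pi_P^{*}$ applied to the permutation action defining $\Ind_{\sym_n}^{\sym_{n+1}}V$ (cf.\ \cite[Section 3.2]{BO--equivariant}), whence a natural isomorphism $\Ind_{\sym_n}^{\sym_{n+1}}(\pi_P^{*}V)\cong\pi_P^{*}\bigl(\Ind_{\sym_n}^{\sym_{n+1}}V\bigr)$. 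Applying this with $V=\CC$ the trivial representation, and using that $\sym_n$ is the stabiliser of $n+1$ so that $\sym_{n+1}/\sym_n$ is the standard $\sym_{n+1}$-set $\{1,\dots,n+1\}$ and hence $\Ind_{\sym_n}^{\sym_{n+1}}\CC\cong\CC[\sym_{n+1}/\sym_n]\cong R$, I obtain
\[
\reg^{\{n\}}=\Ind_{\sym_n}^{\sym_{n+1}}\bigl(\overline{p}_{n+1}^{*}\reg_A\bigr)\cong\pi_P^{*}R\cong\reg_P\otimes R,
\]
and then $\reg^{\{n\}}\cong\reg_P\otimes(\CC\oplus\rho)\cong\reg_P\oplus G$ by \eqref{eq:Psplit}.

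The one genuinely delicate point is the bookkeeping of $\sym_{n+1}$-linearizations in the isomorphism $\Ind_{\sym_n}^{\sym_{n+1}}(\pi_P^{*}V)\cong\pi_P^{*}(\Ind_{\sym_n}^{\sym_{n+1}}V)$; everything else is formal. Should one prefer to avoid the abstract statement, one can instead take the explicit coset representatives $\tau_i=(i,\,n+1)$ for $i=1,\dots,n$ together with $\tau_{n+1}=\mathrm{id}$, identify each summand $\tau_i^{*}\reg_P$ canonically with $\reg_P$, and check straight from \eqref{eq:Ind} that $\sigma\in\sym_{n+1}$ permutes the resulting local frame $\eps_1,\dots,\eps_{n+1}$ of $\reg^{\{n\}}$ by $\sigma\cdot\eps_i=\eps_{\sigma(i)}$, which is exactly the $\sym_{n+1}$-linearization of $\reg_P\otimes R$.
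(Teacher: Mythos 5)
Your proposal is correct and follows essentially the same route as the paper: both reduce to the general compatibility $\Ind_H^G\circ f^*\cong f^*\circ\Ind_H^G$ for the projection $f=\pi_P\colon P\to\Spec\CC$, combined with $\Ind_{\sym_n}^{\sym_{n+1}}\CC\cong R$ and the splitting $R\cong\CC\oplus\rho$. Your explicit verification that $\overline{p}_{n+1}$ is $\sym_n$-invariant (so $\overline{p}_{n+1}^{*}\reg_A\cong\reg_P$ equivariantly) is a detail the paper leaves implicit, but the argument is the same.
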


\begin{proof}
Note that $R\cong \Ind_{\sym_{n}}^{\sym_{n+1}}\CC$ where $\CC$ is the trivial $\sym_n$-representation.

One way to deduce the isomorphism $(\reg_A)^{\{n\}}\cong \reg_P\otimes R$ is to use the general compatibility $\Ind_H^G\circ f^*\cong f^*\circ \Ind_H^G$, which holds for any finite group $G$, any subgroup $H\le G$, and any $G$-equivariant morphism $f\colon X\to Y$; see e.g.\ \cite[Equation (3)]{krug_remarks_2018}.

Indeed, taking $f\colon P\to \Spec \CC$ the projection to a point, and using the identification of equivariant sheaves on $\Spec\CC$ with representations, we get
\[
(\reg_A)^{\{n\}}=\Ind_{\sym_n}^{\sym_{n+1}}\reg_P\cong \Ind_{\sym_n}^{\sym_{n+1}} f^*\CC\cong f^*\Ind_{\sym_n}^{\sym_{n+1}}\CC \cong f^* R\cong \reg_P\otimes R\,.
\]
The second assertion follows directly from the splitting \eqref{eq:Psplit}.
\end{proof}

Setting $Q\coloneqq \Psi(G)$ and recalling that $\Psi(\reg_P)\cong \reg_{\KA}$ and $\Psi(\reg^{\{n\}})\cong \reg_A^{(n)}$ by \autoref{prop:tautMcKay}, we get the decomposition \eqref{eqn:taut-split} of \autoref{thm:sec2} since the equivalence $\Psi$ preserves direct sums.

Next, we want to compute the cohomology of $Q=\Psi(G)$. By \autoref{rem:Psicoh}, we can compute $\Ho^*_\sym(G)$ instead. To be able to state the result in a compact form, we introduce the following notation for any $\sym$-equivariant coherent sheaf $N$ on $P$:
\[
h^i_\sym(N)\coloneqq \dim \Ho^i_\sym(N) \quad,\quad \vec h_{\sym}(N)\coloneqq \bigl(h^0_{\sym}(N), h^1_{\sym}(N),\dots, h^{2n}_{\sym}(N)\bigr)\in (\mathbb N_0)^{2n}\,.
\]
\begin{lemma}\label{lem:hG} We have
\begin{equation}\label{eq:hG}
\vec h_{\sym}(G)=(0,2,1,2,1,\dots, 2,1,2,0)\,.
\end{equation}    
In particular, 
\[
h^1_{\sym}(G)=\ext^1_{\sym}(\reg_P,G)=h^1(Q)=\ext^1(\reg_{\KA}, Q)=2\,.
\]
\end{lemma}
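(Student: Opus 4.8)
The plan is to deduce \eqref{eq:hG} from \autoref{prop:Mea} by subtracting dimension vectors. By \autoref{lem:split} we have the $\sym$-equivariant splitting $\reg^{\{n\}}\cong \reg_P\oplus G$, and $\Ho^i_\sym(-)=\Ho^i(-)^\sym$ is additive over direct sums, so for every $i$ one has $h^i_\sym(G)=h^i_\sym(\reg^{\{n\}})-h^i_\sym(\reg_P)$. Hence it suffices to write down the two dimension vectors on the right-hand side. The first is immediate from \eqref{eq:OH}: the graded vector space $\Ho^*_\sym(\reg_P)$ is one-dimensional in each even degree $0,2,\dots,2n$ and vanishes in odd degrees, i.e.\ $\vec h_\sym(\reg_P)=(1,0,1,0,\dots,0,1)$.

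For the second vector I would apply \eqref{eq:tautH} with $E=\reg_A$, which gives
\[
\Ho^*_\sym\bigl(\reg^{\{n\}}\bigr)\cong \Ho^*(A,\reg_A)\otimes \Ho^*_{\sym_n}(\reg_{P_{n-1}}).
\]
Since $A$ is an abelian surface, $\Ho^*(A,\reg_A)$ has Poincaré polynomial $1+2t+t^2$ (it is $\CC$ in degrees $0$ and $2$ and $\CC^2$ in degree $1$), while \eqref{eq:OH} applied with $n-1$ in place of $n$ — equivalently, \autoref{prop:tautMcKay} for $K_{n-1}(A)$ together with the fact that $K_{n-1}(A)$ is hyperkähler (a K3 surface when $n=2$) — shows that $\Ho^*_{\sym_n}(\reg_{P_{n-1}})\cong \Ho^*(\reg_{K_{n-1}(A)})$ has Poincaré polynomial $1+t^2+\dots+t^{2n-2}$. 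Their product is $1+2t+2t^2+\dots+2t^{2n-1}+t^{2n}$, so $\vec h_\sym(\reg^{\{n\}})=(1,2,2,\dots,2,1)$. Subtracting,
\[
\vec h_\sym(G)=(1,2,2,\dots,2,1)-(1,0,1,0,\dots,0,1)=(0,2,1,2,1,\dots,2,1,2,0),
\]
which is \eqref{eq:hG}; in particular $h^1_\sym(G)=2$. The remaining equalities in the statement are then formal: $h^1_\sym(G)=\ext^1_\sym(\reg_P,G)$ is \eqref{eq:Hinva} with $F=G$; $\Ho^1_\sym(G)\cong \Ho^1(Q)$ is \autoref{rem:Psicoh} applied to $N=G$ (recall $Q=\Psi(G)$); and $h^1(Q)=\ext^1(\reg_{\KA},Q)$ is the standard identification $\Ho^1(Q)\cong\Ext^1(\reg_{\KA},Q)$, which also matches $\ext^1_\sym(\reg_P,G)$ directly since $\Psi$ is an equivalence carrying $\reg_P$ to $\reg_{\KA}$ and $G$ to $Q$.

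I do not expect a genuine obstacle here: all the substance sits in \autoref{prop:Mea}, and what remains is careful bookkeeping with Poincaré polynomials, the one point to watch being that \eqref{eq:tautH} must be used with $E=\reg_A$ and with the index $n-1$ (carrying its $\sym_n$-action) in the second tensor factor, and that degrees are tracked correctly through the product and the subtraction. One could instead compute $\Ho^i_\sym(G)=\Ho^i_\sym(\reg_P\otimes\rho)\cong \Hom_\sym\bigl(\rho,\Ho^i(P,\reg_P)\bigr)$ using that the standard representation $\rho$ is irreducible and self-dual, but this requires the full $\sym$-representation structure of $\Ho^*(P,\reg_P)$ rather than just \autoref{prop:Mea}, so I would not take that route.
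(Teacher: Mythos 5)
Your proposal is correct and follows the paper's own argument essentially verbatim: split off $G$ via $\reg^{\{n\}}\cong\reg_P\oplus G$, compute $\vec h_\sym(\reg^{\{n\}})=(1,2,\dots,2,1)$ from \eqref{eq:tautH} with $E=\reg_A$ together with \eqref{eq:OH}, and subtract $\vec h_\sym(\reg_P)$. The concluding identifications via \eqref{eq:Hinva}, \autoref{rem:Psicoh}, and the equivalence $\Psi$ are likewise exactly as in the paper.
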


\begin{proof}
The splitting \eqref{eq:splitOn} gives $\Ho^*_\sym(\reg^{\{n\}})\cong\Ho^*_\sym(\reg_P)\oplus \Ho^*_\sym(G)$. Hence, $\vec h_\sym(G)=\vec h_\sym(\reg^{\{n\}})- \vec h_\sym(\reg_P)$.
By \eqref{eq:tautH}, we have 
\[
\Ho^*(\reg^{\{n\}})\cong \Ho^*(\reg_A)\otimes \Ho^*_\sym(\reg_{P_{n-1}})
\]
which has dimension vector $\vec h_\sym(\reg^{\{n\}})=(1,2,\dots,2,1)$; compare \eqref{eq:OH}. In summary,
\begin{align*}
\vec h_\sym(G)=\vec h_\sym(\reg^{\{n\}})- \vec h_\sym(\reg_P)    
&= (1,2,\dots,2,1)-(1,0,1,\dots,1,0,1)\\
&=(0,2,1,2,1,\dots, 2,1,2,0)\,. \qedhere
\end{align*}
\end{proof}

\subsection{Computing the extensions of the extension}

From now on, we fix some $0\neq \vartheta\in \Ext^1_{\sym}(\reg_P,G)\cong \Ext^1(\reg_{\KA}, Q)$ and consider the corresponding extension 
\begin{equation}\label{eq:Mses}
0\to G\xrightarrow\alpha M\xrightarrow\beta \reg_P\to 0    
\end{equation}
of $\sym$-equivariant vector bundles on $P$. This sequence corresponds under the equivalence $\Psi$ to 
\eqref{eq:Fses}. Hence, in order to finish the proof of \autoref{thm:sec2}, we need to show that 
\[
\ext^1_{\sym}(M,M)=2\,.
\]    
We will prove $\Ext^1_\sym(M,M)\cong \CC^2$ by working through the diagram
\begin{equation}\label{eq:Extlattice}
\xymatrix{
\Ho^*_\sym(G) \ar^{\beta^*}[d] \ar^{\alpha_*}[r]  & \Ho^*_\sym(M)   \ar^{\beta_*}[r] \ar^{\beta^*}[d]   &  \Ho^*_\sym(\reg_P)  \ar^{\beta^*}[d] \ar^{\vartheta_*}[r] &   \\
 \Ext^*_\sym(M,G)  \ar^{\alpha^*}[d]  \ar^{\alpha_*}[r]  &  \Ext^*_\sym(M,M) \ar^{\alpha^*}[d]  \ar^{\beta_*}[r]  &  \Ext^*_\sym(M,\reg_P) \ar^{\alpha^*}[d] \ar^{\qquad\vartheta_*}[r]&   \\
\Ext^*_\sym(G,G)   \ar^{\alpha_*}[r] \ar^{\vartheta^*}[d]  & \Ext^*_\sym(G,M)   \ar^{\beta_*}[r] \ar^{\vartheta^*}[d]  &  \Ext^*_\sym(G,\reg_P) \ar^{\qquad\vartheta_*}[r] \ar^{\vartheta^*}[d]& \\
 & & & 
}
\end{equation}
induced by the sequence \eqref{eq:Mses} defining $M$. All squares in this diagram commute, and all columns and rows of this diagram are exact triangles in the derived category of vector spaces. This means that they correspond to long exact sequences. For example, the middle column corresponds to the long exact sequence
\begin{equation*}
 \begin{tikzcd}
0 \arrow[r]
& \Ho^0_\sym(M) \arrow[r]\arrow[d, phantom, ""{coordinate, name=Z}]
& \Hom_\sym(M,M) \arrow[r]
& \Hom_\sym(M,G) \arrow[dlll,
rounded corners,
to path={ -- ([xshift=2ex]\tikztostart.east)
|- (Z) [near end]\tikztonodes
-| ([xshift=-2ex]\tikztotarget.west)
-- (\tikztotarget)}] \\
\Ho^1_\sym(M) \arrow[r]
& \Ext^1_\sym(M,M) \arrow[r]
& \Ext^1_\sym(M,G) \arrow[r]
& \Ho^2_\sym(M) \arrow[r] & {}
\end{tikzcd}   
\end{equation*}

We already know the dimensions of the graded pieces on the upper corners of \eqref{eq:Extlattice} by \eqref{eq:hG} and \eqref{eq:OH}. We will now also compute the dimensions of the graded pieces of the lower corners. For this purpose, we introduce the notation that, for two $\sym$-equivariant sheaves $C$ and $D$ on $P$, we write 
\[
\vext_\sym(C,D)\coloneqq \bigl(\hom_\sym(C,D), \ext^1_\sym(C,D),\dots, \ext^{2n}_\sym(C,D)\bigr)\,.
\]
\begin{lemma}\label{lem:hextG}We have
\begin{align*}
\vext_\sym(G,\reg_P)&=(0,2,1,2,1,\dots, 2,1,2,0)\,,\\
\vext_\sym(G,G)&=\begin{cases}(1,2,5,2,1)&\quad\text{for $n=2$,}
\\
(1,2,6,6,6,2,1)&\quad\text{for $n=3$,}
\\
(1,2,6,6,7,6,\dots,6,7,6, 6,2,1)&\quad\text{for $n\ge 4$.}
\end{cases}
\end{align*}
\end{lemma}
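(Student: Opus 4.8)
The plan is to compute both dimension vectors directly from the structural formulas in \autoref{prop:Mea}, so the proof is essentially a bookkeeping exercise once the right inputs are assembled. For $\vext_\sym(G,\reg_P)$, first I would observe that $\reg^{\{n\}}\cong\reg_P\oplus G$ by \autoref{lem:split}, hence $\Ext^*_\sym(\reg^{\{n\}},\reg_P)\cong\Ext^*_\sym(\reg_P,\reg_P)\oplus\Ext^*_\sym(G,\reg_P)$, which gives $\vext_\sym(G,\reg_P)=\vext_\sym(\reg^{\{n\}},\reg_P)-\vext_\sym(\reg_P,\reg_P)$. The second term is $\vec h_\sym(\reg_P)=(1,0,1,\dots,1,0,1)$ by \eqref{eq:OH} together with Serre duality on $\KA$ (or directly, $\Ext^*_\sym(\reg_P,\reg_P)\cong\Ho^*_\sym(\reg_P)$). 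The first term is computed by \eqref{eq:tautHdual} with $E=\reg_A$: we get $\Ext^*_\sym(\reg^{\{n\}},\reg_P)\cong\Ho^*(\reg_A^\vee)\otimes\Ho^*_{\sym_n}(\reg_{P_{n-1}})\cong\Ho^*(\reg_A)\otimes\Ho^*_{\sym_n}(\reg_{P_{n-1}})$, which has dimension vector $(1,2,2,\dots,2,1)$ exactly as in the proof of \autoref{lem:hG}. Subtracting yields $(0,2,1,2,1,\dots,2,1,2,0)$ — note this coincides with $\vec h_\sym(G)$, which is reassuring and in fact expected since $\reg^{\{n\}}$ is self-dual up to twist.

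For $\vext_\sym(G,G)$, the key input is formula \eqref{eq:tautExt} applied with $E=F=\reg_A$, which gives
\[
\Ext^*_\sym(\reg^{\{n\}},\reg^{\{n\}})\cong\bigl(\Ext^*(\reg_A,\reg_A)\otimes\Ho^*_{\sym_n}(\reg_{P_{n-1}})\bigr)\oplus\bigl(\Ho^*(\reg_A)\otimes\Ho^*(\reg_A)\otimes\Ho^*_{\sym_{n-1}}(\reg_{P_{n-2}})\bigr).
\]
Here $\Ext^*(\reg_A,\reg_A)\cong\Ho^*(\reg_A)$ has dimension vector $(1,2,1)$ (the cohomology of an abelian surface), $\Ho^*_{\sym_n}(\reg_{P_{n-1}})\cong\Ho^*(\reg_{K_{n-1}(A)})$ has dimension vector $(1,0,1,\dots,0,1)$ of length $2n-1$, and likewise $\Ho^*_{\sym_{n-1}}(\reg_{P_{n-2}})$ has dimension vector $(1,0,1,\dots,0,1)$ of length $2n-3$. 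Taking the graded tensor products and direct sum produces $\vec{\rm ext}_\sym(\reg^{\{n\}},\reg^{\{n\}})$; then, using the splitting $\reg^{\{n\}}\cong\reg_P\oplus G$ once more, we have $\Ext^*_\sym(\reg^{\{n\}},\reg^{\{n\}})\cong\Ext^*_\sym(\reg_P,\reg_P)\oplus\Ext^*_\sym(\reg_P,G)\oplus\Ext^*_\sym(G,\reg_P)\oplus\Ext^*_\sym(G,G)$, so
\[
\vext_\sym(G,G)=\vecext_\sym(\reg^{\{n\}},\reg^{\{n\}})-\vec h_\sym(\reg_P)-\vec h_\sym(G)-\vext_\sym(G,\reg_P),
\]
where $\vec h_\sym(\reg_P)=(1,0,1,\dots,1,0,1)$, $\vec h_\sym(G)$ is given by \eqref{eq:hG}, and $\vext_\sym(G,\reg_P)$ was just computed. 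Carrying out the subtraction degree by degree yields the three cases: the small cases $n=2,3$ are degenerate because the "interior" pattern has not yet stabilized, while for $n\ge 4$ one sees the stable middle value $7$ flanked by $6$'s. I would present the $n=2$ and $n=3$ computations explicitly as sanity checks and then indicate the general pattern.

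The only genuinely delicate point is the indexing: one must be careful that the graded tensor product $(1,2,1)\otimes(1,0,1,\dots,0,1)$ is convolved correctly — i.e. the coefficient in degree $k$ of the product is $\sum_{i+j=k}a_ib_j$ — and that the length-$(2n{+}1)$ vectors all line up in the same degrees before subtracting. A secondary subtlety is justifying $\Ext^*(\reg_A,\reg_A)\cong\Ho^*(\reg_A)$ with the correct ring structure as a graded vector space (immediate, since $\reg_A$ is a line bundle on a smooth projective variety), and $\Ext^*(\reg_A,\reg_A)^\vee$-duality is not needed. There is no conceptual obstacle: everything reduces to \autoref{prop:Mea}, the known Betti numbers of abelian surfaces and generalized Kummer varieties (via \eqref{eq:OH}), and arithmetic. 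The main "obstacle" is thus simply doing the convolution without sign or index errors, and splitting off the small $n$ cases where the stable pattern degenerates.
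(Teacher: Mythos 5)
Your proposal is correct and follows essentially the same route as the paper: the computation of $\vext_\sym(G,G)$ via \eqref{eq:tautExt}, the fourfold splitting of $\Ext^*_\sym(\reg^{\{n\}},\reg^{\{n\}})$, and degree-by-degree subtraction is exactly the paper's argument, and your convolutions reproduce the stated vectors (including the degenerate $n=2,3$ cases). The only difference is cosmetic: for $\vext_\sym(G,\reg_P)$ you use \eqref{eq:tautHdual} together with the splitting, whereas the paper deduces it from \autoref{lem:hG} by equivariant Serre duality on $P$; both give the same palindromic vector.
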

\begin{proof}
The formula for $\vext_\sym(G,\reg_P)$ follows by \eqref{eq:hG} together with equivariant Serre duality.

To compute $\vext_\sym(G,G)$, first note that the splitting $\reg^{\{n\}}\cong \reg_P\oplus G$ gives
\[
 \Ext^*_\sym(\reg^{\{n\}},\reg^{\{n\}})\cong \Ho^*_\sym(\reg_P)\oplus \Ho^*_\sym(G)\oplus \Ext^*_\sym(G,\reg_P)\oplus \Ext^*_\sym(G,G)\,.   
\]
On the other hand, \eqref{eq:tautExt} and \eqref{eq:tautH} give
\begin{align*}
  \Ext^*_\sym(\reg^{\{n\}},\reg^{\{n\}})&\cong \bigl(\Ho^*(\reg_A)\otimes \Ho^*_{\sym}(\reg_{P_{n-1}})\bigr)\oplus \bigl(\Ho^*(\reg_A)\otimes \Ho^*(\reg_A)\otimes \Ho^*_{\sym}(\reg_{P_{n-2}})\bigr)\\
  &\cong \Ho^*(\reg^{\{n\}})\oplus \bigl(\Ho^*(\reg_A)\otimes \Ho^*(\reg_A)\otimes \Ho^*_{\sym}(\reg_{P_{n-2}})\bigr)
  \,.  
\end{align*}
Noting again that $\Ho^*_\sym(\reg^{\{n\}})\cong \Ho^*_\sym(\reg_P)\oplus \Ho^*_\sym(G)$ we can cancel the direct summands in the two different decompositions of $\Ext^*_\sym(\reg^{\{n\}},\reg^{\{n\}})$ to get a (non-canonical) isomorphism of graded vector spaces
\begin{equation}\label{eq:aftercancel}
\Ext^*_\sym(G,\reg_P)\oplus \Ext^*_\sym(G,G)\cong \Ho^*(\reg_A)\otimes \Ho^*(\reg_A)\otimes \Ho^*_{\sym}(\reg_{P_{n-2}}) 
\end{equation}
The graded vector space $\Ho^*(\reg_A)\otimes \Ho^*(\reg_A)\otimes \Ho^*_{\sym}(\reg_{P_{n-2}})$ has dimension vector (meaning the vector whose $i$th entry is the dimension of the degree $i$ part)
\[
\overrightarrow{d}= \begin{cases}(1,4,6,4,1)&\quad\text{for $n=2$,}
\\
(1,4,7,8,\dots,8, 7,2,1)&\quad\text{for $n\ge 3$.}
\end{cases}
\]
We get the desired result for $\vext_\sym(G,G)$ since we already computed $\vext_\sym(G,\reg_P)$ and 
\eqref{eq:aftercancel} gives
\[
\vext_\sym(G,G)=\overrightarrow{d} - \vext_\sym(G,\reg_P)\,. \qedhere
\]
\end{proof}

Now we know the graded vector spaces at all four corners of \eqref{eq:Extlattice}. So if we understand the connecting degree 1 maps $\vartheta_*$ and $\vartheta^*$ of the diagram, we can compute also the dimensions of the other graded vector spaces, and in particular, the dimension of $\Ext^1_\sym(M,M)$ in the middle of the diagram that we are interested in. As $\vartheta_*$ and $\vartheta^*$ are given by Yoneda products with $\vartheta\in \Ho^1_\sym(G)\cong\Ext^1_\sym(\reg_P,G)$, they are described by the following 

\begin{lemma}\label{lem:repdescription}
We consider the 2-dimensional complex vector space
\[
V\coloneqq \Ho^1(\reg_A)\,.
\]
\begin{enumerate}
 \item There are $\sym$-equivariant isomorphism of algebras
 \[
\Ext^*(\reg_P,\reg_P)\cong \Ho^*(\reg_P)\cong \wedge^*(V\otimes \rho) 
 \]
where the multiplication on the left side is the Yoneda product, the multiplication in the middle is the cup product, and the multiplication on the left is the wedge product.
\item Given two $\sym$-representations $U$ and $W$, we have a canonical $\sym$-equivariant isomorphism 
\[
\Ext^*(\reg_P\otimes U,\reg_P\otimes W)\cong \wedge^*(V\otimes \rho)\otimes \Hom(U,W)
\]
\item Given three $\sym$-representations $U,W, X$ the canonical isomorphisms make the diagram
\begin{equation*}
    \begin{tikzcd}
       \Ext^*(\reg_P\otimes W,\reg_P\otimes X)\times \Ext^*(\reg_P\otimes U,\reg_P\otimes W) \arrow[r, "\mathrm{Yon}"]\arrow[d,"\cong"] & \Ext^*(\reg_P\otimes U,\reg_P\otimes X)  \arrow[d,"\cong"]   \\
 \Bigl(\wedge^*(V\otimes \rho)\otimes \Hom(W,X)\Bigr) \times \Bigl(\wedge^*(V\otimes \rho)\otimes \Hom(U,W)\Bigr)   \arrow[r]  &     \wedge^*(V\otimes \rho)\otimes \Hom(U,X)  
    \end{tikzcd}
\end{equation*}
commute. Here, the upper horizontal map is the Yoneda product, and is the lower horizontal map is the tensor product of the wedge product on $\wedge^*(V\otimes \rho)$ and the composition on $\Hom(P,P)$.
\end{enumerate}    
\end{lemma}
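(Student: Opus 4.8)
The plan is to prove all three parts simultaneously by tracking the derived McKay correspondence — or rather, a direct equivariant computation — through the isomorphism $P \cong A^n$ of \eqref{eq:Piso}. The key structural input is that $\reg_P$ (equivalently $\reg_{A^n}$ after transport along $h$) has a very simple cohomology as a graded algebra: since $A$ is an abelian surface, $\Ho^*(\reg_A) \cong \wedge^* V$ with $V = \Ho^1(\reg_A)$ two-dimensional, and by the Künneth formula $\Ho^*(\reg_{A^n}) \cong \wedge^*(V^{\oplus n}) = \wedge^*(V \otimes \CC^n)$ as graded algebras under cup product. The point of part (1) is to identify the $\sym_{n+1}$-linearization on this: under $h$, the factor $\CC^n$ is exactly the standard representation $\rho$ (this is essentially the content of \eqref{eq:Piso} — the coordinates $(-a_1,\dots,-a_n,\sum a_i)$ realize $\rho = R/\CC$), and the $\sym$-action on $\Ho^*(\reg_P)$ is the one induced functorially, which on $\Ho^1 = V \otimes \rho$ is $\id_V \otimes (\text{standard action})$ and extends multiplicatively to the wedge algebra. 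So part (1) is: combine Künneth, the abelian-variety identification $\Ho^*(\reg_A) = \wedge^* V$, and a bookkeeping check that the linearization matches the standard representation; the identification of Yoneda product with cup product on $\Ext^*(\reg_P,\reg_P) = \Ho^*(\reg_P)$ is the standard fact for the structure sheaf.

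For part (2), the plan is to use the projection formula together with part (1). Writing $f\colon P \to \Spec\CC$ and viewing $U, W$ as equivariant sheaves $f^*U, f^*W$, one has $\reg_P \otimes U = f^*U$ and hence, since $f$ is smooth (so $f^*$ is exact) and $U$ is a trivial bundle twisted by a representation,
\[
\Ext^*_{D^b(P)}(\reg_P \otimes U, \reg_P \otimes W) \cong \Ext^*_{D^b(P)}(\reg_P, \reg_P) \otimes \Hom_\CC(U, W) \cong \wedge^*(V \otimes \rho) \otimes \Hom(U,W),
\]
where the first isomorphism is $\sExt$-sheaf-level plus the fact that $\sHom(f^*U, f^*W) = f^*\Hom(U,W) = \sHom(\reg_P,\reg_P) \otimes \Hom(U,W)$ as a constant sheaf, and the $\sym$-equivariance of each identification is immediate from functoriality. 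Taking the induced $\sym$-action on both sides and invariants recovers the equivariant statement via \eqref{eq:Hominva}, though here the Lemma as stated is about the (non-invariant) $\Ext^*$ carrying a $\sym$-action, so we stop before taking invariants.

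Part (3) is the compatibility of Yoneda composition with the tensor decomposition, and this is where I expect the only real bookkeeping. The strategy is: the Yoneda product on $\Ext^*(\reg_P \otimes U, \cdot) $ is induced by composition of morphisms in $D^b(P)$; under the identification of part (2), a class decomposes as $\omega \otimes \phi$ with $\omega \in \wedge^*(V\otimes\rho)$ a ``cohomological'' part and $\phi$ a ``homomorphism of representations'' part. Composition $(\omega' \otimes \phi') \circ (\omega \otimes \phi)$ should be $(\omega' \smile \omega) \otimes (\phi' \circ \phi)$ — the cohomological parts multiply by cup product (Koszul signs absorbed into the graded structure) and the $\Hom$ parts compose — because the two tensor factors come from orthogonal sources: one from $P$ (via $f^*$ of identity-type maps being the constant sheaf $\reg_P$), the other from $\Spec\CC$ (the representations), and $\Ext^*$ of an external tensor product is multiplicative. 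Concretely, I would reduce to the case $U = W = X = \CC$, where the claim is just that Yoneda product on $\Ext^*(\reg_P, \reg_P)$ equals cup product on $\Ho^*(\reg_P)$ (part (1)), and then ``tensor in'' the representation-valued factors, using that composition in $D^b(P \times \Spec\CC) = D^b(P)$ respects the Künneth-type splitting. The main obstacle — and it is genuinely only a sign/coherence issue, not a conceptual one — is verifying that no spurious signs or twists enter when commuting the representation factors past the odd-degree cohomology classes in $\wedge^*(V \otimes \rho)$; this is handled by working with the standard graded-commutative conventions and noting that the $\Hom(U,W)$ factors sit in cohomological degree $0$, so they commute with everything without sign.
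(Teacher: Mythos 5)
Your skeleton is the same as the paper's: reduce everything to the single statement that $\Ho^1(\reg_P)\cong V\otimes\rho$ as $\sym$-representations (Mumford's theorem $\Ho^*(\reg_P)\cong\wedge^*\Ho^1(\reg_P)$ plus Yoneda $=$ cup for the structure sheaf handles the algebra structure), and then deduce (2) and (3) from the fact that $\reg_P\otimes U=f^*U$ is just a direct sum of copies of $\reg_P$ with a representation bookkeeping in degree $0$ — the paper dispatches (2) and (3) in one sentence exactly as you do. The one place where you diverge, and where your write-up is under-justified, is the identification of the $\sym_{n+1}$-action on $\Ho^1(\reg_P)$. You propose to transport along $h\colon A^n\xrightarrow{\cong}P$ from \eqref{eq:Piso} and call the rest "a bookkeeping check that the linearization matches the standard representation"; but $h$ is explicitly \emph{not} $\sym_{n+1}$-equivariant, so the induced $\sym_{n+1}$-action on $A^n$ is not by permutation of factors (a transposition $(i\,,n{+}1)$ acts by $(a_1,\dots,a_n)\mapsto(a_1,\dots,-\sum_j a_j,\dots,a_n)$), and consequently the Künneth identification $\Ho^1(\reg_{A^n})\cong V\otimes\CC^n$ does not come with an evident $\sym_{n+1}$-structure — that computation \emph{is} the content of part (1), not an afterthought. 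The paper sidesteps this by staying inside $A^{n+1}$: the restriction $\iota^*\colon V\otimes R\cong\Ho^1(\reg_{A^{n+1}})\to\Ho^1(\reg_P)$ is manifestly equivariant and surjective, and its kernel is shown to be exactly the image of $\Sigma^*\colon V\hookrightarrow V\otimes R$, i.e.\ the invariant line $V\otimes R^{\sym}$, giving the equivariant short exact sequence $0\to V\to V\otimes R\to\Ho^1(\reg_P)\to 0$ and hence $\Ho^1(\reg_P)\cong V\otimes(R/\CC)=V\otimes\rho$ with no coordinate computation. Your route can be completed — one can verify directly that the twisted action on $V\otimes\CC^n$ is $\id_V\otimes\rho$ with $e_{n+1}=-\sum_i e_i$ — but that verification should be written out; as it stands the only genuinely non-formal step of the lemma is the one you assert rather than prove.
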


\begin{proof}
The statement of part (1) can be extracted from \cite[Section 3.1]{AndWis--Kummer}, but let us give a proof. 
As $P\cong A^n$ is an abelian variety, we have an isomorphism of algebras $\Ho^*(\reg_P)\cong \wedge^* \Ho^1(\reg_P)$; see \cite[Corollary 1 on p.\ 8]{Mum--abelianbook}.
Hence, it suffices to proof that $\Ho^1(\reg_P)\cong V\otimes \rho$ as representations of $\sym=\sym_{n+1}$.
Considering the permutation action of $\sym$ on $A^{n+1}$, we have, by Künneth formula, an isomorphism of $\sym$-representations $\Ho^1(\reg_{A^{n+1}})\cong V\otimes R$.

The embedding $\iota\colon P\hookrightarrow A^{n+1}$ has the (non-$\sym$-equivariant) retract
\[
A^{n+1}\to P\quad,\quad (a_1,\dots, a_n,a_{n+1})\mapsto (a_1,\dots, a_n,-\sum_{i=1}^n a_i) 
\]
Hence, $\iota^*\colon \Ho^1(\reg_{A^{n+1}})\cong V\otimes R\twoheadrightarrow \Ho^1(\reg_P)$ is surjective. 

By definition of $P$, we have $\Sigma\circ \iota=\Sigma_{\mid P}=0$, where $\Sigma\colon A^{n+1}\to A$ is the addition map. Hence, $\iota^*\circ \Sigma^*\colon V=\Ho^1(\reg_A)\to \Ho^1(\reg_P)$ is the zero map. 
Furthermore, $\Sigma\colon A^{n+1}\to A$ has sections, for example $a\mapsto(a,0,\dots,0)$. Hence,
\[
\Sigma^*\colon V=\Ho^1(\reg_A)\to \Ho^1(\reg_{A^{n+1}})\cong V\otimes R
\]
is injective. Since $\Sigma$ is $\sym$-invariant, the image of $\Sigma^*$ is contained in the space of $\sym$-invariants. As $(V\otimes R)^\sym\cong V\otimes (R^\sym)\cong V$, the dimensions match, and $\Sigma^*\colon V \to \Ho^1(\reg_{A^{n+1}})$ is actually the embedding of the whole space of invariants. Again, because the dimensions match, we can summarize the above to a short exact sequence of $\sym$-representations
\[
0\to V\xrightarrow{\Sigma^*} V\otimes R\xrightarrow{\iota^*} \Ho^1(\reg_P)\to 0
\]
which means that $\Ho^1(\reg_P)$ is the quotient of $V\otimes R$ by the space of invariants. Hence, $\Ho^1(\reg_P)\cong V\otimes \rho$.

The statements (2) and (3) follow from (1) because of the way the tensor product of a vector space with a coherent sheaf is defined.
\end{proof}

Using \autoref{lem:repdescription}(2), we get another explanation of the dimension $h^1_{\sym}(G)=\ext^1_{\sym}(\reg_P,G)=2$ first computed in \autoref{lem:hG}. Namely,
\[
\Ho^1_{\sym}(G)=\Ext_\sym^1(\reg_P,G)\cong\bigl[\wedge^1(V\otimes \rho)\otimes \Hom(\CC,\rho)  \bigr]^\sym\cong \bigl[V\otimes \rho\otimes \rho\bigl]^\sym\cong V\,.
\]
Concretely, the the last isomorphism is given by 
\begin{equation}\label{eq:thetav}
V\xrightarrow \cong \bigl[V\otimes \rho\otimes \rho\bigl]^\sym\quad,\quad v\mapsto \vartheta_v\coloneqq \sum_{i=1}^{n+1} ve_i\otimes e_i\,.
\end{equation}

\begin{remark}
    Here, and in the following, our convention is that we omit the tensor sign for elements of $V\otimes \rho$, but explicitly write down all the other tensor signs.
\end{remark}

We now fix some $v\in V$ such that the class $0\neq \vartheta\in \Ho^1_\sym(G)$ under the isomorphism $\Ho^1(G)\cong \bigl[V\otimes \rho\otimes \rho\bigr]^\sym$ is given by $\vartheta=\vartheta_v$. Furthermore, we fix some $u\in V$ which together with $v$ gives a base:
\[
V=\langle u,v\rangle\,.
\]

\begin{lemma}\label{lem:thetainj1}
The maps $\vartheta_*\colon \Ho^0_{\sym}(\reg_P)\to \Ho^1_{\sym}(G)$ and $\vartheta_*\colon \Ho^2_{\sym}(\reg_P)\to \Ho^3_{\sym}(G)$ are injective.
\end{lemma}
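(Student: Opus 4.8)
The plan is to work entirely on the abelian variety side, using the algebra identification of \autoref{lem:repdescription}(1)--(2). Under that identification, $\Ho^*_\sym(\reg_P)$ is the $\sym$-invariant part of $\wedge^*(V\otimes\rho)$, and $\Ho^*_\sym(G)=\Ho^*_\sym(\reg_P\otimes\rho)$ is the $\sym$-invariant part of $\wedge^*(V\otimes\rho)\otimes\rho$. The class $\vartheta=\vartheta_v$ lives in degree $1$, and by \autoref{lem:repdescription}(3) the map $\vartheta_*$ is just left multiplication by $\vartheta_v=\sum_{i=1}^{n+1} v e_i\otimes e_i$, i.e. wedge with $\sum_i v e_i$ on the exterior-algebra factor together with the bookkeeping $\otimes\, e_i$ on the $\rho$-factor. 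So I first need a concrete model: writing $V=\langle u,v\rangle$ and $e_1,\dots,e_n$ for the basis of $\rho$ with $e_{n+1}=-\sum_{i=1}^n e_i$, the space $V\otimes\rho$ has basis $\{ue_i, ve_i\}_{i=1}^n$, and I compute in $\wedge^*(V\otimes\rho)$.

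First I would handle $\vartheta_*\colon \Ho^0_\sym(\reg_P)\to\Ho^1_\sym(G)$. Here $\Ho^0_\sym(\reg_P)=\CC$, generated by $1$, and $\vartheta_*(1)=\vartheta_v\neq 0$ by hypothesis; so this case is immediate. The real content is the degree-$2$ statement $\vartheta_*\colon \Ho^2_\sym(\reg_P)\to\Ho^3_\sym(G)$. By \autoref{lem:repdescription}(1) and \autoref{prop:Mea}, $\Ho^2_\sym(\reg_P)=\Ho^2(\reg_{\KA})$ is one-dimensional, spanned by the class $\alpha$ of \eqref{eq:OH}; concretely, inside $[\wedge^2(V\otimes\rho)]^\sym$, I would identify this generator explicitly. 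A natural candidate is $\omega\coloneqq\sum_{i=1}^{n+1} ue_i\wedge ve_i$ (one checks this is $\sym$-invariant and nonzero, and since the invariant space is one-dimensional it must be the generator up to scalar — this identification is essentially the content of \cite[Section 3.1]{AndWis--Kummer} / \cite[Theorem 6.7]{meachan_derived_2015}). Then I must show $\vartheta_v\cdot\omega\neq 0$ in $[\wedge^3(V\otimes\rho)\otimes\rho]^\sym$.

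The key computation is therefore: $\vartheta_v\cdot\omega=\bigl(\sum_k v e_k\wedge\omega\bigr)\otimes e_k$-type sum, explicitly $\sum_{k}\sum_{i} v e_k\wedge ue_i\wedge ve_i\otimes e_k$. I would expand this in the basis of $\wedge^3(V\otimes\rho)$, using the single relation $e_{n+1}=-\sum_{i=1}^n e_i$, and exhibit one basis monomial of $\wedge^3(V\otimes\rho)\otimes\rho$ whose coefficient is nonzero. For instance, fixing distinct indices and tracking the coefficient of a term such as $v e_{n+1}\wedge u e_1\wedge v e_1\otimes e_{n+1}$ (equivalently, after eliminating $e_{n+1}$, the coefficient of $ue_1\wedge ve_1\wedge\cdots$ against a chosen $e_k$), one finds the terms do not cancel — the point being that the substitution $e_{n+1}=-\sum e_i$ is the only relation, so distinct wedge monomials in the $e_i$, $i\le n$, are linearly independent, and the coefficient one lands on is a nonzero integer. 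I expect this bookkeeping — expanding the double sum, applying the relation consistently in both the $\wedge$-slots and the $\otimes$-slot, and pinning down one surviving monomial — to be the main obstacle; it is elementary multilinear algebra but requires care with signs and with the redundancy $e_{n+1}=-\sum_{i\le n}e_i$. Once a single nonzero coefficient is produced, injectivity of $\vartheta_*$ on the one-dimensional spaces $\Ho^0_\sym(\reg_P)$ and $\Ho^2_\sym(\reg_P)$ follows.
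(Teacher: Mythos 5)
Your proposal takes essentially the same route as the paper: the degree-$0$ case is immediate from $\vartheta_*(1)=\vartheta\neq 0$, and for the degree-$2$ case one identifies the generator of $\Ho^2_\sym(\reg_P)\cong\bigl[\wedge^2(V\otimes\rho)\bigr]^\sym$ as $\omega=\sum_{i=1}^{n+1}ue_i\wedge ve_i$, computes $\vartheta_*(\omega)=\sum_{i,j}ue_i\wedge ve_i\wedge ve_j\otimes e_j$ via \autoref{lem:repdescription}(3), and exhibits a basis monomial with nonzero coefficient after eliminating $e_{n+1}$. The one step you defer --- the actual coefficient count, which you rightly flag as the crux --- is exactly where the paper does the work: it fixes the monomial $ue_1\wedge ve_1\wedge ve_2\otimes e_2$ and checks that the four contributing summands (those with $i,j\in\{1,n+1\}\cup\{2\}$ appropriately) yield $1+1+1+0=3\neq 0$; be aware that the choice of monomial matters, since for other choices the contributions can cancel.
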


\begin{proof}
We have that $\Ho^0_{\sym}(\reg_P)=\langle 1\rangle$ is spanned by the constant function 1. As $\vartheta_*(1)=\vartheta\neq 0$ , we have the first claimed injectivity.

By \autoref{prop:Mea}, also $\Ho^2_{\sym}(\reg_P)$ is also one-dimensional. Under the isomorphism 
$\Ho^2(\reg_P)\cong \bigl[\wedge^2(V\otimes \rho)\bigr]^\sym$ of \autoref{lem:repdescription}(1), a generator of this vector space is given by 
\[
\omega=\sum_{i=1}^{n+1} ue_i\wedge ve_i\,;
\]
see \cite[Lemma B.6(1)]{Scala--Coh}. By \autoref{lem:repdescription}(3) and the description \eqref{eq:thetav} of $\vartheta=\vartheta_v$, we have
\begin{equation}\label{eq:doublesum}
\vartheta_*(\omega)=\vartheta\circ \omega=\sum_{i=1}^{n+1}\sum_{j=1}^{n+1}ue_i\wedge ve_i\wedge ve_j\otimes e_j
\end{equation}
in $\Ho^3(G)\cong \Ext^3(\reg_P,\reg_P\otimes \rho)\cong \wedge^3(V\otimes \rho)\otimes \rho$. The base $ue_1,\dots, ue_n,ve_1,\dots,ve_n$ of $V\otimes \rho$ together with the base $e_1,\dots,e_n$ of $\rho$ induce a base of $\wedge^3(V\otimes \rho)\otimes \rho$. We expand \eqref{eq:doublesum} in terms of the induced base, and count the coefficient of $ue_1\wedge ve_1\wedge ve_2\otimes e_2$. Out of the $(n+1)^2$ summands of \eqref{eq:doublesum}, only 4 contribute towards this coefficient, namely:
\begin{itemize}
 \item For $i=1$ and $j=2$, the summand is $ue_1\wedge ve_1\wedge ve_2\otimes e_2$ itself, hence contributing by 1 to the coefficient of this base element in the expansion.
 \item For $i=1$ and $j=n+1$, we use relation \eqref{eq:en+1} to compute
 \[
ue_1\wedge ve_1\wedge ve_{n+1}\otimes e_{n+1}=ue_1\wedge ve_1\wedge(-\sum_{a=1}^n e_a)\otimes (-\sum_{b=1}^n e_b)  
\]
 which contributes to the coefficient of $ue_1\wedge ve_1\wedge ve_2\otimes e_2$ in the base expansion by $(-1)(-1)=1$ via the summand with $a=b=2$.
\item For $i=n+1$ and $j=2$, we compute
 \[
ue_{n+1}\wedge ve_{n+1}\wedge ve_{2}\otimes e_{2}=(-\sum_{a=1}^n ue_a)\wedge (-\sum_{b=1}^n ve_b)\wedge ve_2\otimes e_2 \,. 
 \]
Again, this contributes to the coefficient of $ue_1\wedge ve_1\wedge ve_2\otimes e_2$ in the base expansion by $(-1)(-1)=1$, this time via the summand $a=b=1$. 
\item For $i=n+1$ and $j=n+1$, we compute
 \[
ue_{n+1}\wedge ve_{n+1}\wedge ve_{n+1}\otimes e_{n+1}=(-\sum_{a=1}^n ue_a)\wedge (-\sum_{b=1}^n ve_b)\wedge (-\sum_{c=1}^n ve_c)\otimes (-\sum_{d=1}^n e_d) \,. 
 \]
Among the summands of the fourfold sum on the right-hand side, two contribute to the coefficient of $ue_1\wedge ve_1\wedge ve_2\otimes e_2$, namely for $(a,b,c,d)=(1,1,2,2)$ and $(a,b,c,d)=(1,2,1,2)$. By the skew symmetry between the second and third wedge factor, these two contribution cancel out. Hence, the overall contribution of the  
$i=n+1$ and $j=n+1$ case is $0$.
\end{itemize}
In total, the coefficient of $ue_1\wedge ve_1\wedge ve_2\otimes e_2$ in the base expansion of $\vartheta_*(\omega)$ is $3\neq 0$. In particular, $\vartheta_*(\omega)\neq 0$ showing that $\vartheta_*\colon \Ho^2_{\sym}(\reg_P)\to \Ho^3_{\sym}(G)$ is injective.
\end{proof}

\begin{corollary}\label{cor:hF} We have
 \[
 \vec h_\sym(M)=(0,1,1,1, ?,\dots,?)
 \]   
The question marks indicate that we do not make any claims here on $h^i_\sym(M)$ for $i\ge 4$. 
\end{corollary}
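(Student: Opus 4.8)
The plan is to extract the four claimed dimensions directly from the long exact cohomology sequence attached to the defining extension \eqref{eq:Mses}, i.e.\ from the top row of the diagram \eqref{eq:Extlattice}. Applying $\Ho^*_\sym=\Ext^*_\sym(\reg_P,-)$ to $0\to G\to M\to \reg_P\to 0$ yields the exact sequence
\[
\cdots\to \Ho^{i-1}_\sym(\reg_P)\xrightarrow{\vartheta_*}\Ho^i_\sym(G)\xrightarrow{\alpha_*}\Ho^i_\sym(M)\xrightarrow{\beta_*}\Ho^i_\sym(\reg_P)\xrightarrow{\vartheta_*}\Ho^{i+1}_\sym(G)\to\cdots,
\]
whose connecting maps are the Yoneda products with the extension class $\vartheta$. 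All numerical input is already available: $\vec h_\sym(G)=(0,2,1,2,1,\dots)$ from \autoref{lem:hG}, $\vec h_\sym(\reg_P)=(1,0,1,0,\dots)$ from \eqref{eq:OH}, and, crucially, the injectivity of $\vartheta_*$ on $\Ho^0_\sym(\reg_P)$ and on $\Ho^2_\sym(\reg_P)$ from \autoref{lem:thetainj1}.

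Then I would march through degrees $0$ to $3$, using the vanishing of the odd cohomology of $\reg_P$ to break the long exact sequence into short pieces. In degree $0$, since $\Ho^0_\sym(G)=0$ and $\vartheta_*\colon\Ho^0_\sym(\reg_P)=\CC\to\Ho^1_\sym(G)$ is injective, $\Ho^0_\sym(M)=0$. In degree $1$, $\Ho^1_\sym(\reg_P)=0$ forces $\Ho^1_\sym(M)\cong\coker\bigl(\vartheta_*\colon\CC\hookrightarrow\Ho^1_\sym(G)=\CC^2\bigr)$, hence $h^1_\sym(M)=1$. In degree $2$, $\Ho^1_\sym(\reg_P)=0$ gives an injection $\Ho^2_\sym(G)=\CC\hookrightarrow\Ho^2_\sym(M)$, while the image of $\beta_*\colon\Ho^2_\sym(M)\to\Ho^2_\sym(\reg_P)$ equals $\ker\bigl(\vartheta_*\colon\Ho^2_\sym(\reg_P)\to\Ho^3_\sym(G)\bigr)=0$ by \autoref{lem:thetainj1}; hence $h^2_\sym(M)=1$. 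In degree $3$, $\Ho^3_\sym(\reg_P)=0$ gives $\Ho^3_\sym(M)\cong\coker\bigl(\vartheta_*\colon\Ho^2_\sym(\reg_P)=\CC\hookrightarrow\Ho^3_\sym(G)=\CC^2\bigr)$, so $h^3_\sym(M)=1$. This is exactly $\vec h_\sym(M)=(0,1,1,1,?,\dots,?)$.

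I do not expect a genuine obstacle here: once \autoref{lem:thetainj1} is in hand, the argument is pure bookkeeping in the long exact sequence, the only care needed being to track which of the two flanking $\vartheta_*$-maps is controlled in each degree. The one point worth stressing is why the computation stops at degree $3$: continuing would require understanding $\vartheta_*$ on $\Ho^{\ge 3}_\sym(\reg_P)$, and those maps need not be injective in general — whenever $\vartheta_*$ fails to be injective in a higher degree, the corresponding $h^i_\sym(M)$ jumps up — so no claim is made for $i\ge 4$.
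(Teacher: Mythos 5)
Your argument is correct and is essentially identical to the paper's proof: both extract $h^0_\sym(M),\dots,h^3_\sym(M)$ from the long exact sequence of the top row of \eqref{eq:Extlattice}, using the dimension vectors from \eqref{eq:OH} and \autoref{lem:hG} together with the injectivity statements of \autoref{lem:thetainj1} to resolve the two short pieces into which the sequence breaks. No gaps.
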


\begin{proof}
We use the long exact sequence associated to the top row of \eqref{eq:Extlattice}:
\[
\dots\to \Ho^i_\sym(G)\to \Ho^i_\sym(M)\to \Ho^i_{\sym}(\reg_P)\to  \Ho^{i+1}_\sym(G)\to \dots\,.
\]
By \eqref{eq:OH} and \eqref{eq:hG}, the start of this sequence splits in the form of
\begin{equation*}
 0\to \Ho_\sym^0(M)\to \CC\xrightarrow{\vartheta_*} \CC^2\to \Ho_\sym^1(M)\to 0\,\,\,\text{and}
\end{equation*}
\begin{equation*}
    0\to \CC\to \Ho_\sym^2(M)\to \CC\xrightarrow{\vartheta_*}\CC^2\to \Ho_\sym^3(M)\to 0\,.
\end{equation*}
By \autoref{lem:thetainj1}, the two displayed maps $\vartheta_*$ are both injective. This implies $h^0_\sym(M)=0$ and $h^1_\sym(M)=h^2_\sym(M)=h^3_\sym(M)=1$, as claimed.
\end{proof}

\begin{remark}
Actually, for all even $i$ with $0\le i\le 2n-2$, where $\Ho^i(\reg_P)$ is one-dimensional, it is true that $\vartheta_*\colon \Ho^i(\reg_P)\to \Ho^{i+1}(G)$ is injective. Hence, we have 
\[
 \vec h_\sym(M)=(0,1,1,1, \dots,1)
 \]
without the question marks in higher degrees. Anyway, since the low degrees are all we need in order to show \autoref{thm:sec2}, we do not proof this claim.
\end{remark}

\begin{lemma}\label{lem:thetainj2}
The map $\vartheta_*\colon \Ext^1_\sym(G,\reg_P)\to \Ext^2_\sym(G,G)$ is injective.
\end{lemma}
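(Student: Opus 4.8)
The plan is to prove \autoref{lem:thetainj2} by an explicit computation in the representation‑theoretic model of \autoref{lem:repdescription}, running exactly parallel to the proof of \autoref{lem:thetainj1}. First I would pin down the source space. Applying \autoref{lem:repdescription}(2) with the $\sym$‑representations $\rho$ and $\CC$ gives $\Ext^*_\sym(G,\reg_P)\cong\bigl[\wedge^*(V\otimes\rho)\otimes\Hom(\rho,\CC)\bigr]^\sym$; in degree $1$, since $\sym$ acts trivially on $V$ and $[\rho\otimes\rho^\vee]^\sym=\End_\sym(\rho)=\CC\cdot\id_\rho$ by Schur's lemma ($\rho$ being irreducible), we get $\Ext^1_\sym(G,\reg_P)\cong V\otimes\CC\cdot\id_\rho\cong V$, consistent with $\ext^1_\sym(G,\reg_P)=2$ from \autoref{lem:hextG}. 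Concretely, fixing the basis $e_1^\vee,\dots,e_n^\vee$ of $\rho^\vee$ with $\sum_{i=1}^{n+1}e_i\otimes e_i^\vee=\id_\rho$ and $e_{n+1}^\vee=-\sum_{i=1}^n e_i^\vee$ (mirroring \eqref{eq:en+1}), the class attached to $w\in V$ is $\xi_w\coloneqq\sum_{i=1}^{n+1}we_i\otimes e_i^\vee$, and $\{\xi_u,\xi_v\}$ is a basis of $\Ext^1_\sym(G,\reg_P)$.

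Next, by \autoref{lem:repdescription}(3) together with the formula $\vartheta=\vartheta_v=\sum_{j=1}^{n+1}ve_j\otimes e_j$ of \eqref{eq:thetav}, the map $\vartheta_*$ is given by the Yoneda product
\[
\vartheta_*(\xi_w)=\vartheta_v\circ\xi_w=\sum_{i,j=1}^{n+1}(ve_j\wedge we_i)\otimes(e_j\otimes e_i^\vee)\in\Ext^2_\sym(G,G)\cong\bigl[\wedge^2(V\otimes\rho)\otimes\End(\rho)\bigr]^\sym\,,
\]
where $e_j\otimes e_i^\vee\in\rho\otimes\rho^\vee=\End(\rho)$. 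Writing $w=\lambda u+\mu v$, the wedge factor is $\lambda(ve_j\wedge ue_i)+\mu(ve_j\wedge ve_i)$, so the $\lambda$‑part of $\vartheta_*(\xi_w)$ is supported on basis monomials of $\wedge^2(V\otimes\rho)$ containing one $u$‑factor and one $v$‑factor, while the $\mu$‑part is supported on monomials with two $v$‑factors. Since these two sets of monomials are disjoint, $\vartheta_*$ is injective as soon as $\vartheta_v\circ\xi_u\neq0$ and $\vartheta_v\circ\xi_v\neq0$. Each of these I would verify by extracting a single coefficient in the monomial basis of $\wedge^2(V\otimes\rho)\otimes\rho\otimes\rho^\vee$ induced by $ue_1,\dots,ue_n,ve_1,\dots,ve_n$ and $e_1,\dots,e_n$ and $e_1^\vee,\dots,e_n^\vee$: the coefficient of $(ve_1\wedge ue_2)\otimes e_1\otimes e_2^\vee$ in $\vartheta_v\circ\xi_u$, and the coefficient of $(ve_1\wedge ve_2)\otimes e_1\otimes e_2^\vee$ in $\vartheta_v\circ\xi_v$. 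Exactly as in \autoref{lem:thetainj1}, after substituting $e_{n+1}=-\sum_{i=1}^n e_i$ and $e_{n+1}^\vee=-\sum_{i=1}^n e_i^\vee$, only the four terms with $(j,i)\in\{1,n+1\}\times\{2,n+1\}$ contribute in either sum, and a short sign count gives the nonzero totals $4$ and $3$ respectively (the term $(n+1,n+1)$ survives in the first but vanishes in the second). This uses $n\geq2$, which is standing.

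I expect the only slightly delicate points to be the sign bookkeeping in these two four‑term coefficient counts, and making sure that the identification of $\Ext^1_\sym(G,\reg_P)$ with $V$ — and hence the displayed formula for $\vartheta_*(\xi_w)$ — is set up with conventions consistent with \eqref{eq:thetav} and \autoref{lem:repdescription}(3). Note that none of the signs actually matter for the argument: whatever ordering/Koszul sign the Yoneda product introduces in the wedge factor, the $u$‑ and $v$‑contributions to $\vartheta_*(\xi_w)$ still occupy disjoint sets of basis monomials, and each of $\vartheta_v\circ\xi_u$, $\vartheta_v\circ\xi_v$ still has a nonzero coefficient on the monomial named above. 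So the conceptual core of the proof is immediate once the model is in place, and the rest is routine. (An alternative, homological reformulation — $\vartheta_*$ injective on $\Ext^1$ iff $\beta_*\colon\Ext^1_\sym(G,M)\to\Ext^1_\sym(G,\reg_P)$ vanishes, iff $\alpha_*\colon\Ext^1_\sym(G,G)\to\Ext^1_\sym(G,M)$ is surjective — does not seem to simplify matters, as it would require knowing $\ext^1_\sym(G,M)$ independently.)
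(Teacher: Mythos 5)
Your proposal is correct and follows essentially the same route as the paper's proof: the same identification of $\Ext^1_\sym(G,\reg_P)$ and $\Ext^2_\sym(G,G)$ via \autoref{lem:repdescription}, the same invariant generators (your $\xi_w$ is the paper's $\phi_w$), and the same four-term coefficient count yielding the nonzero totals $3$ and $4$. Your observation that the $u$- and $v$-parts of $\vartheta_*(\xi_{\lambda u+\mu v})$ are supported on disjoint sets of basis monomials is a marginally cleaner way to package the linear-independence step than the paper's crosswise vanishing check, but the substance is identical.
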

\begin{proof}
 By \autoref{lem:hextG}, the vector space $\Ext^1_\sym(G,\reg_P)$ is of dimension two. Hence, it suffices to find two linearly independent vectors in the image of    
\[
\vartheta_*\colon \Ext^1_\sym(G,\reg_P)\cong \bigl[V\otimes \rho\otimes \rho^\vee\bigr]^{\sym} \to \Ext^2_\sym(G,G)\cong \bigl[\wedge^2(V\otimes \rho)\otimes \rho^\vee\otimes\rho\bigr]^{\sym}\,.
\] 
For this purpose, consider the base $e_1', \dots,e_n'$ of $\rho^\vee$ dual to $e_1,\dots ,e_n$ and set \begin{equation}\label{eq:e'relation}
e_{n+1}'=-\sum_{i=1}^ne_i'\,.\end{equation}
Then, the $\sym$-action on $\rho^{\vee}$ is given by $\sigma\cdot e_i'=e_{\sigma(i)}'$. Hence, for $w\in V$, the vector
\[
\phi_w\coloneqq \sum_{i=1}^{n+1}we_i\otimes e_i'\in V\otimes \rho\otimes \rho^\vee
\]
is invariant. We want to show that $\vartheta_*(\phi_v)$ and $\vartheta_*(\phi_u)$ are linearly independent in 
\[
\bigl[\wedge^2(V\otimes \rho)\otimes \rho^\vee\otimes\rho\bigr]^{\sym}\subset \wedge^2(V\otimes \rho)\otimes \rho^\vee\otimes\rho
\]
by comparing their expansion with respect to the base of $\wedge^2(V\otimes \rho)\otimes \rho^\vee\otimes\rho$ which is canonically induced by the bases $V\otimes \rho=\langle ue_1,\dots ,ue_n, ve_1,\dots, ve_n\rangle$, $\rho=\langle e_1,\dots,e_n\rangle$, and $\rho^\vee=\langle e_1',\dots,e_n'\rangle$. 
By \autoref{lem:repdescription}, and recalling the formula \eqref{eq:thetav} for $\vartheta=\vartheta_v$, we have
\begin{align}
 \vartheta_*(\phi_v)&=\sum_{i=1}^{n+1}\sum_{j=1}^{n+1} (ve_i\wedge ve_j)\otimes e_i'\otimes e_j\label{eq:thetaphiv}\\   
\vartheta_*(\phi_u)&=\sum_{i=1}^{n+1}\sum_{j=1}^{n+1} (ue_i\wedge ve_j)\otimes e_i'\otimes e_j\,.\label{eq:thetaphiu}
\end{align}
Obviously, the coefficient of $(ve_1\wedge ve_2)\otimes e_1'\otimes e_2$ in the base expansion of $\vartheta_*(\phi_u)$ is zero, as is the coefficient of $(ue_1\wedge ve_1)\otimes e_1'\otimes e_1$ in the base expansion of $\vartheta_*(\phi_v)$.
If we show that, conversely, the coefficient of $(ve_1\wedge ve_2)\otimes e_1'\otimes e_2$ in $\vartheta_*(\phi_v)$ and the coefficient of $(ue_1\wedge ve_1)\otimes e_1'\otimes e_1$ in $\vartheta_*(\phi_u)$ are non-zero, then we are done.

Only four summands of the double sum \eqref{eq:thetaphiv} contribute towards the coefficient of $(ve_1\wedge ve_2)\otimes e_1'\otimes e_2$ in the base expansion:

\begin{itemize}
\item For $i=1$ and $j=2$, the summand is $(ve_1\wedge ve_2)\otimes e_1'\otimes e_2$ itself. Hence, we get a contribution of 1 to the coefficient.
\item For $i=1$ and $j=n+1$, we use relation \eqref{eq:e'relation} to rewrite the summand as
\[
(ve_1\wedge ve_{n+1})\otimes e_1'\otimes e_{n+1}=ve_1\wedge (\sum_{a=1}^n ve_a)\otimes e_1'\otimes (\sum_{b=1}^n e_b)
\]
which contributes to the coefficient of $(ve_1\wedge ve_2)\otimes e_1'\otimes e_2$ by 1, namely via the summand with $a=2=b$.
\item Analogously, the summand with $i=n+1$ and $j=2$ contributes by 1.
\item For $i=n+1=j$, we rewrite the summand as 
\[
(ve_{n+1}\wedge ve_{n+1})\otimes e_{n+1}'\otimes e_{n+1}=(\sum_{a=1}^n ve_a)\wedge (\sum_{b=1}^n ve_b)\otimes (\sum_{c=1}^n e_c')\otimes (\sum_{d=1}^n e_d)\,.
\]
We get two contributions to the coefficient of $(ve_1\wedge ve_2)\otimes e_1'\otimes e_2$, namely from $(a,b,c,d)=(1,2,1,2)$ and from $(a,b,c,d)=(2,1,1,2)$. However, by the skew symmetry in the first two factors, these cancel each other, and the whole $i=n+1=j$ case does not contribute to the coefficient of $(ve_1\wedge ve_2)\otimes e_1'\otimes e_2$ in the base expansion of \eqref{eq:thetaphiv}.
\end{itemize}
In summary, the coefficient of $(ve_1\wedge ve_2)\otimes e_1'\otimes e_2$ in the base expansion of $\vartheta_*(\phi_v)$ is 3, and not 0.

Let us now compute the coefficient of $(ue_1\wedge ve_1)\otimes e_1'\otimes e_1$ in $\vartheta_*(\phi_u)$ by counting the contributions of the summmands of \eqref{eq:thetaphiu}. 
\begin{itemize}
\item For $i=1$ and $i=1$, the summand contributes 1.
\item For $i=1$ and $j=n+1$, the summand contributes 1.
\item For $i=n+1$ and $j=1$, the summand contributes 1.
\item For $i=n+1=j$, the summand can be rewritten as  
\[
(ue_{n+1}\wedge ve_{n+1})\otimes e_{n+1}'\otimes e_{n+1}=(\sum_{a=1}^n ue_a)\wedge (\sum_{b=1}^n ve_b)\otimes (\sum_{c=1}^n e_c')\otimes (\sum_{d=1}^n e_d)\,.
\]
Now, only $(a,b,c,d)=(1,1,1,1)$ contributes by 1.
\end{itemize}
In summary, the coefficient of $(ue_1\wedge ue_1)\otimes e_1'\otimes e_1$ in the base expansion of $\vartheta_*(\phi_u)$ is 4, and not 0.
\end{proof}

\begin{corollary}\label{cor:alpha*}
The induced maps \[\alpha_*\colon \Hom_\sym(G,G)\to \Hom_\sym(G,M)\quad,\quad \alpha_*\colon \Ext^1_\sym(G,G)\to \Ext^1_\sym(G,M)\] are both isomorphisms. In particular,
\[
\overrightarrow{\ext}_\sym(G,M)=(1,2,?,\dots,?)\,.
\]    
\end{corollary}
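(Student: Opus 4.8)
The plan is to extract the statement directly from the long exact sequence obtained by applying the covariant functor $\Hom_\sym(G,-)$ to the defining short exact sequence \eqref{eq:Mses}; this is precisely the bottom row of the diagram \eqref{eq:Extlattice}, whose connecting homomorphism in degree $i$ is the Yoneda product $\vartheta_*\colon \Ext^i_\sym(G,\reg_P)\to\Ext^{i+1}_\sym(G,G)$ with the extension class $\vartheta\in\Ext^1_\sym(\reg_P,G)$. Concretely, the relevant low-degree portion of this long exact sequence reads
\begin{multline*}
0\to \Hom_\sym(G,G)\xrightarrow{\alpha_*}\Hom_\sym(G,M)\xrightarrow{\beta_*}\Hom_\sym(G,\reg_P)\xrightarrow{\vartheta_*}\Ext^1_\sym(G,G)\\
\xrightarrow{\alpha_*}\Ext^1_\sym(G,M)\xrightarrow{\beta_*}\Ext^1_\sym(G,\reg_P)\xrightarrow{\vartheta_*}\Ext^2_\sym(G,G)\to\cdots
\end{multline*}

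The first key input is the vanishing $\hom_\sym(G,\reg_P)=0$, which is the degree-$0$ entry of the vector $\vext_\sym(G,\reg_P)=(0,2,1,2,1,\dots)$ established in \autoref{lem:hextG}. This single vanishing does most of the work at once: it forces $\beta_*\colon\Hom_\sym(G,M)\to\Hom_\sym(G,\reg_P)$ to be zero, so that $\alpha_*\colon\Hom_\sym(G,G)\to\Hom_\sym(G,M)$ is surjective; being the first nonzero term in the sequence it is also injective, hence an isomorphism. The same vanishing makes the connecting map $\vartheta_*\colon\Hom_\sym(G,\reg_P)\to\Ext^1_\sym(G,G)$ trivial, so that $\alpha_*\colon\Ext^1_\sym(G,G)\to\Ext^1_\sym(G,M)$ is injective.

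To upgrade the $\Ext^1$ statement to an isomorphism, I would invoke \autoref{lem:thetainj2}: the next connecting map $\vartheta_*\colon\Ext^1_\sym(G,\reg_P)\to\Ext^2_\sym(G,G)$ is injective, hence $\beta_*\colon\Ext^1_\sym(G,M)\to\Ext^1_\sym(G,\reg_P)$ vanishes by exactness, and therefore $\alpha_*\colon\Ext^1_\sym(G,G)\to\Ext^1_\sym(G,M)$ is surjective as well. Finally, the dimension vector $\vext_\sym(G,M)=(1,2,?,\dots,?)$ follows by transporting the values $\hom_\sym(G,G)=1$ and $\ext^1_\sym(G,G)=2$ from \autoref{lem:hextG} — which hold uniformly in all three cases $n=2$, $n=3$, and $n\ge4$ — across the two isomorphisms just established.

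Since all the genuine computational content (the injectivity of the relevant Yoneda maps $\vartheta_*$ and the dimension vectors) has already been carried out in \autoref{lem:hextG} and \autoref{lem:thetainj2}, there is no serious obstacle in this corollary; the only point requiring care is the bookkeeping of exactly which terms in the long exact sequence vanish and in which degree, so that the injectivity and the surjectivity of each $\alpha_*$ are both correctly accounted for.
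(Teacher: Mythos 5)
Your proof is correct and follows essentially the same route as the paper: both arguments read off the long exact sequence of the bottom row of \eqref{eq:Extlattice}, use the vanishing $\hom_\sym(G,\reg_P)=0$ from \autoref{lem:hextG} to handle the $\Hom$-level isomorphism and the injectivity of the $\Ext^1$-level map, and then invoke the injectivity of $\vartheta_*\colon\Ext^1_\sym(G,\reg_P)\to\Ext^2_\sym(G,G)$ from \autoref{lem:thetainj2} to force $\beta_*=0$ and hence surjectivity. The final dimension count via $\hom_\sym(G,G)=1$ and $\ext^1_\sym(G,G)=2$ matches the paper as well.
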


\begin{proof}
We consider the long exact sequence associated to the bottom row of \eqref{eq:Extlattice}. By \autoref{lem:hextG}, the start of this sequence splits in the form of
\begin{equation*}
    0\to \Hom_\sym(G,G)\xrightarrow{\alpha_*} \Hom_\sym(G,M)\to 0\,\,\,\text{and}
\end{equation*}
\begin{equation*}
    0\to \Ext^1_\sym(G,G)\xrightarrow{\alpha_*} \Ext^1_\sym(G,M)\xrightarrow{\beta_*} \Ext^1_\sym(G,\reg_P) \xrightarrow{\vartheta_*} \Ext^2_\sym(G,G)
\end{equation*}

The zeros show that the first $\alpha_*$ is an isomorphism and that the second $\alpha_*$ is injective. By \autoref{lem:thetainj2}, the map $\vartheta_*$ is injective, which implies that $\beta_{*}=0$ so the second $\alpha_*$ is also surjective. 
\end{proof}

\begin{lemma}\label{lem:nonzero}
 The map $\vartheta^*\colon \Ext^1_\sym(G,M)\to \Ho^2_\sym(M)$ is non-zero.   
\end{lemma}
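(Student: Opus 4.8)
The plan is to use the commutativity of the squares in \eqref{eq:Extlattice} to reduce the assertion to a non-vanishing statement purely about the equivariant derived category of $P$, and then to settle that statement by an explicit Yoneda computation of the same flavour as \autoref{lem:thetainj1} and \autoref{lem:thetainj2}.

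For the reduction, note that $\vartheta^*=(-)\circ\vartheta$ is natural in the second argument, so the morphism $\alpha\colon G\to M$ induces a commutative square whose horizontal arrows are the maps $\alpha_*$ and whose vertical arrows are the connecting maps $\vartheta^*$, that is,
\[
\alpha_*\circ\vartheta^*=\vartheta^*\circ\alpha_*\colon \Ext^1_\sym(G,G)\longrightarrow \Ho^2_\sym(M).
\]
By \autoref{cor:alpha*} the map $\alpha_*\colon \Ext^1_\sym(G,G)\to\Ext^1_\sym(G,M)$ is an isomorphism; and the map $\alpha_*\colon\Ho^2_\sym(G)\to\Ho^2_\sym(M)$ is injective, since the long exact sequence of the top row of \eqref{eq:Extlattice} reads $\Ho^1_\sym(\reg_P)\to\Ho^2_\sym(G)\xrightarrow{\alpha_*}\Ho^2_\sym(M)\to\cdots$ and $\Ho^1_\sym(\reg_P)=0$ by \eqref{eq:OH} (in fact $\alpha_*$ is an isomorphism here, both spaces being $1$-dimensional by \eqref{eq:hG} and \autoref{cor:hF}). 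Consequently $\vartheta^*\colon\Ext^1_\sym(G,M)\to\Ho^2_\sym(M)$ is non-zero provided that $\vartheta^*\colon\Ext^1_\sym(G,G)\to\Ho^2_\sym(G)$ is non-zero, and the latter is a statement to which \autoref{lem:repdescription} applies directly.

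To prove the latter, identify $\Ext^1_\sym(G,G)\cong[V\otimes\rho\otimes\Hom(\rho,\rho)]^\sym\cong V\otimes[\rho\otimes\rho^\vee\otimes\rho]^\sym$ via \autoref{lem:repdescription}(2); the second tensor factor is one-dimensional (it computes the multiplicity of $\rho$ in $\rho\otimes\rho$), spanned by $\Theta=\sum_{i=1}^{n+1}e_i\otimes e_i'\otimes e_i$, so that $\Ext^1_\sym(G,G)$ has basis $\psi_u,\psi_v$ where $\psi_w\coloneqq\sum_{i=1}^{n+1}we_i\otimes e_i'\otimes e_i$ (the invariance and non-vanishing of $\psi_w$ are checked just as for the elements used in \autoref{lem:thetainj2}, using \eqref{eq:en+1} and \eqref{eq:e'relation}). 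Then compute $\vartheta^*(\psi_u)=\psi_u\circ\vartheta_v$ using \autoref{lem:repdescription}(3) and the expression \eqref{eq:thetav} for $\vartheta=\vartheta_v$: wedging the $\wedge^\bullet(V\otimes\rho)$-factors and composing the $\Hom$-factors yields
\[
\vartheta^*(\psi_u)=\sum_{i,j=1}^{n+1}\bigl(ue_i\wedge ve_j\bigr)\otimes\langle e_i',e_j\rangle\,e_i\ \in\ \Ho^2_\sym(G)\cong\bigl[\wedge^2(V\otimes\rho)\otimes\rho\bigr]^\sym,
\]
where $\langle e_i',e_j\rangle=\delta_{ij}$ for $1\le i,j\le n$, extended to the index $n+1$ by \eqref{eq:en+1} and \eqref{eq:e'relation}. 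Carrying out the inner summation over $j$ gives
\[
\vartheta^*(\psi_u)=\sum_{i=1}^{n}\bigl(ue_i\wedge v(e_i-e_{n+1})\bigr)\otimes e_i+(n+1)\bigl(ue_{n+1}\wedge ve_{n+1}\bigr)\otimes e_{n+1},
\]
and expanding in the basis of $\wedge^2(V\otimes\rho)\otimes\rho$ induced by $ue_1,\dots,ue_n,ve_1,\dots,ve_n$ and $e_1,\dots,e_n$ (substituting $e_{n+1}=-\sum_{l=1}^ne_l$, and likewise for $ue_{n+1},ve_{n+1}$) one checks that the coefficient of $(ue_1\wedge ve_2)\otimes e_1$ equals $1-(n+1)=-n\ne0$ for $n\ge2$. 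Hence $\vartheta^*(\psi_u)\ne0$, so $\vartheta^*\colon\Ext^1_\sym(G,G)\to\Ho^2_\sym(G)$ is non-zero, and the reduction step completes the proof.

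The only substantial step is this last computation: once the composition conventions of \autoref{lem:repdescription}(3) are pinned down, it is a bookkeeping exercise in the same spirit as \autoref{lem:thetainj1} and \autoref{lem:thetainj2}, and the point to be careful about is that, after all the cancellations forced by the antisymmetry of $\wedge$ and by $\sum_{i=1}^{n+1}ve_i=0$, a single genuinely non-zero term survives --- in contrast to $\vartheta^*(\psi_v)$, which one expects, and can check, to vanish.
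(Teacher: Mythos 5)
Your proposal is correct and takes essentially the same route as the paper: the same reduction through the commutative square relating $\vartheta^*$ and $\alpha_*$ (using $\Ho^1_\sym(\reg_P)=0$ together with \autoref{cor:alpha*}) to the non-vanishing of $\vartheta^*\colon\Ext^1_\sym(G,G)\to\Ho^2_\sym(G)$, and the same test class $\psi_u=\sum_{i=1}^{n+1}ue_i\otimes e_i'\otimes e_i$, which is exactly the paper's $\xi$. The only (immaterial) difference is the basis coefficient extracted at the end: you read off the coefficient of $(ue_1\wedge ve_2)\otimes e_1$ and get $-n$, whereas the paper reads off the coefficient of $ue_1\wedge ve_1\otimes e_1$ and gets $1-n$; both are non-zero for $n\ge 2$.
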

\begin{proof}
We consider the commutative diagram
\[
\xymatrix{
&\Ext^1_\sym(G,G)   \ar^{\alpha_*}[r] \ar^{\vartheta^*}[d]  & \Ext^1_\sym(G,M) \ar^{\vartheta^*}[d] \\
\Ho^1_\sym(\reg_P)\ar^{\vartheta_*}[r] &\Ho^2_\sym(G)  \ar^{\alpha_*}[r]  & \Ho^2_\sym(M)        
}
\]
with exact lower row. The upper $\alpha_*$ is an isomorphism by \autoref{cor:alpha*}. We have $\Ho^1_\sym(\reg_P)=0$; see \eqref{eq:OH}. Hence, the lower $\alpha_*$ is injective. 

Accordingly, it suffices to show that $\vartheta^*\colon \Ext^1_{\sym}(G,G)\to \Ho^2_\sym(G)$ is non-zero. 
For this purpose, consider 
\[
\xi\coloneqq \sum_{i=1}^{n+1} ue_i\otimes e_i'\otimes e_i\in \bigl[(V\otimes\rho)\otimes \rho^\vee\otimes \rho \bigr]^\sym\cong \Ext^1_\sym(G,G)\,.
\]
By \autoref{lem:repdescription}, we have 
\begin{equation}\label{eq:xitheta}
\vartheta^*(\xi)=\xi\circ \vartheta= \sum_{i=1}^{n+1}\sum_{j=1}^{n+1}(ue_i\wedge ve_j)\otimes e_i'(e_j)\cdot e_i\in \bigl[\wedge^2(V\otimes \rho)\otimes \rho\bigr]^\sym\cong \Ho^2_\sym(G)\,.  
\end{equation}
Note that, for $i,j\in \{1,\dots,n\}$, we have
\begin{align*}
e_j'(e_i)&=\delta_{i,j} \quad,\quad e_{n+1}'(e_i)=-\sum_{a=1}^n e_a'(e_i)=-1\,,\\ e_j'(e_{n+1})&=e_j'(-\sum_{b=1}^n e_b)=-1\quad,\quad e_{n+1}'(e_{n+1})=(-1)^2\sum_{a=1}^n\sum_{b=1}^ne_a(e_b')=n\,.
\end{align*}
We consider the base expansion of $\vartheta^*(\xi)$ in terms of the base of $\wedge^2(V\otimes \rho)\otimes \rho$ induced by the base $ue_1,\dots, ue_n,ve_1,\dots, ve_n$ of $V\otimes \rho$ and $e_1,\dots, e_n$ of $\rho$. We show that the coefficient of $ue_1\wedge ve_1\wedge e_1$ in that expansion is non-zero, by counting the contributions of the summands of \eqref{eq:xitheta}:
\begin{itemize}
\item For $i=1=j$, the contribution is $1$.
\item For $i=n+1$ and $j=1$, we have the summand
\[
(ue_{n+1}\wedge ve_1)\otimes e_{n+1}'(e_1)\cdot e_{n+1}=(-\sum_{a=1}^n ue_a)\wedge ve_1\otimes (-1)(-\sum_{b=1}^{n} e_b)\,.
\]
This contributes by $(-1)^3=-1$ via the summand $a=b=1$.
\item For $i=1$ and $j=n+1$, we have the summand
\[
(ue_1\wedge ve_{n+1})\otimes e_{1}'(e_{n+1})\cdot e_{1}=ue_1\wedge (-\sum_{a=1}^n ve_a)\otimes (-1)e_1\,.
\]
This contributes by $(-1)^2=1$ via the summand $a=1$.
\item For $i=n+1=j$, we have the summand
\[
(ue_{n+1}\wedge ve_{n+1})\otimes e_{n+1}'(e_{n+1})\cdot e_{n+1}=(-\sum_{a=1}^n ve_a)\wedge (-\sum_{b=1}^n ue_b)\otimes n(-\sum_{c=1}^ne_c)\,.
\]
This contributes by $(-1)^3n=-n$ via the summand $a=b=c=1$.
\end{itemize}
In summary, the coefficient of $ue_1\wedge ve_1\wedge e_1$ in the base expansion of $\vartheta^*(\xi)$ is $1-n$. Remembering the assumption that $n\ge 2$, we have $\vartheta^*(\xi)\neq 0$. 
\end{proof}

\begin{proof}[Finishing the proof of \autoref{thm:sec2}]
We consider the long exact sequence associated to the middle column of \eqref{eq:Extlattice}. For us, the relevant part is

\begin{equation*}
 \begin{tikzcd}
0 \arrow[r]
& \Hom_\sym(M,M) \arrow[r,"\alpha^{*}"]\arrow[d, phantom, ""{coordinate, name=Z}]
& \Hom_\sym(G,M) \arrow[r,"\vartheta^*"]
& \Ho^1_\sym(M) \arrow[dlll,
rounded corners,
to path={ -- ([xshift=2ex]\tikztostart.east)
|- (Z) [near end]\tikztonodes
-| ([xshift=-2ex]\tikztotarget.west)
-- (\tikztotarget)}] \\
\Ext^1_\sym(M,M) \arrow[r]
& \Ext^1_\sym(G,M) \arrow[r,"\vartheta^*"]
& \Ho^2_\sym(M)
\end{tikzcd}   
\end{equation*}
where we already used that $\Ho^0_\sym(M)=0$ by \autoref{cor:hF}. 
By \autoref{cor:alpha*}, we have $\hom_\sym(G,M)=1$, i.e. $\Hom_\sym(G,M)=\CC\langle\alpha\rangle$. 

Hence, the map $\alpha^*\colon \Hom_\sym(M,M)\to \Hom(G,M)$ is surjective. It follows that $\vartheta^*\colon \Hom_\sym(G,M)\to \Ho_\sym^1(M)$ is the zero map. This in turn implies that we have an embedding $\Ho^1_\sym(M)\hookrightarrow\Ext^1_\sym(M,M)$. Recall that $\Ho^1_\sym(M)$ is one-dimensional; see \autoref{cor:hF}. 

By \autoref{cor:alpha*}, \autoref{cor:hF}, and \autoref{lem:nonzero}, the map
\begin{equation*}
   \vartheta^*\colon \Ext^1_\sym(G,M)\cong \CC^2\to\Ho^2_\sym(M)\cong \CC 
\end{equation*}
 is surjective with one-dimensional kernel. In summary, we have a short exact sequence  
\[
0\to \CC\to \Ext^1_\sym(M,M)\to \CC\to 0\,.
\]
Hence, $\ext_\sym^1(M,M)=2$. Since $\Psi$ is an equivalence with $\Psi(M)=F$, we also have
$\ext^1(F,F)=2$.
\end{proof}

\section{Stability of extension bundles}\label{sec:stability}

\subsection{Preliminaries on stability of sheaves}

We quickly recall several two stability notions, both of which will be used in the paper.

Let $(X,H)$ be a polarized smooth projective variety of dimension $n$ and let $F$ be a torsion free coherent sheaf with first Chern class $\operatorname{c}_1(F)\in \NS(X)$ on $X$, then we define the slope of $F$ with respect to $H$ by:
\begin{equation*}
	\mu_H(E)=\frac{\operatorname{c}_1(F)H^{n-1}}{\rk(F)}.
\end{equation*} 
\begin{definition}
Let $H\in \NS(X)$ be an ample class. The sheaf $F$ is slope (semi)stable with respect to $H$ or $H$-slope (semi)stable if for any proper subsheaf $E\subset F$ with rank $0< \rk(E) < \rk(F)$ we have:
\begin{equation*}
	\mu_H(F) \leqp \mu_H(F).
\end{equation*}
\end{definition}

\begin{remark}
	More generally we could allow numerical ample classes in the definition of stability, i.e.\ $H$ living in the ample cone $\Amp(X)$ of $X$. Here we have $\Amp(X)\subset N^1(X)_{\RR}\coloneqq \NS(X)\otimes_{\ZZ}{\RR}$.
\end{remark}

The disadvantage of this definition is that stability with respect $H$ is not linear in $H$. This problem can be fixed by studying stability with respect to movable curve classes; see \cite{greb_movable_2016}.

First recall that a numerical curve class $\gamma\in N_1(X)_{\RR}$ is said to be movable if $\gamma\cdot D\geqslant 0$ for any effective Cartier divisor $D$ under the nondegenerate bilinear pairing
\begin{equation*}
	N_1(X)_{\RR} \times N^1(X)_{\RR}\rightarrow \RR
\end{equation*}
induced from the intersection pairing. We define the slope of $F$ with respect to a movable curve class $\gamma$ by:
\begin{equation*}
	\mu_{\gamma}(F)=\frac{\operatorname{c_1}(F)\gamma}{\rk(F)}.
\end{equation*} 
\begin{definition}
 Let $\gamma\in N_1(X)_{\RR}$ be a movable curve class.	The sheaf $F$ is slope (semi)stable with respect to $\gamma$ or $\gamma$-slope (semi)stable if for any proper subsheaf $E\subset F$ with rank $0< \rk(E) < \rk(F)$ we have:
	\begin{equation*}
		\mu_{\gamma}(E) \leqp \mu_{\gamma}(F).
	\end{equation*}
\end{definition}

\begin{remark}(see \cite[Section 2.3]{greb_movable_2016})
	All elementary properties that are satisfied by coherent sheaves that are slope stable with respect to an ample class also hold when stability is defined by a movable class. 
\end{remark}

\begin{remark}
We consider the map
	\begin{equation*}
		\Phi: N^1(X)_{\RR} \rightarrow N_1(X)_{\RR},\,D\mapsto D^{n-1}\,.
	\end{equation*}
For $H\in \Amp(X)$, the curve class $\Phi(H)\in N_1(X)_{\RR}$ is movable and
\begin{equation*}
	\mu_H(F)=\mu_{\Phi(H)}(F)\,.
\end{equation*}
That is, slope stability with respect to the ample class $H$ is the same as slope stability with respect to the movable curve class $H^{n-1}$.
\end{remark}

From now on we fix the polarized abelian surface $(A,H)$, so $A$ is still an abelian surface and $H\in \NS(A)$ is an ample class. To study slope stability on $\KA$ we recall that there is an isomorphism
\begin{equation}\label{neron-kummer}
	\NS(\KA) \cong \NS(A)_K\oplus \mathbb{Z}\delta.
\end{equation}
\begin{remark}
   The summand $\NS(A)_K\cong \NS(A)$ is constructed as follows: for $D\in \NS(A)$ the class $D_P\coloneqq \sum \overline{p}_i^{*}D\in \NS(\PA)$ is $\sym$-invariant and descends along 
   \begin{equation*}
       \sigma:\PA\rightarrow\PA/\sym=:\SA
   \end{equation*}
   to give a class $D_S\in \NS(\SA)$. Pullback of this class along the Hilbert-Chow morphism $\mathsf{HC}_K: \KA\rightarrow \SA$ finally gives a class $D_K\in\NS(\KA)$. 
\end{remark}

\begin{remark}
The ample class $H\in \NS(A)$ induces the semi-ample (i.e.\ big and nef) class $H_K\in \NS(\KA)$ using the isomorphism \eqref{neron-kummer}. The ample class also induces the ample class $H_P\coloneqq \sum\limits_{i=1}^{n+1} \overline{p}_i^{*}H\in \NS(\PA)$.   
\end{remark}

Using this isomorphism we record the following numerical invariants:
\begin{equation}\label{eq:tautChern}
	\rk(E^{(n)})=(n+1)\rk(E)\,\,\,\text{and}\,\,\,\operatorname{c}_1(E^{(n)})=\operatorname{c}_1(E)_K-\rk(E)\delta.
\end{equation}
(For the first Chern class, see for example \cite[Lemma 1.5]{wandel_kummer} and note that there is an isomorphism $E^{(n)}=\iota^{*}E^{[n+1]}$ where $E^{[n+1]}$ is the induced tautological bundle on the Hilbert scheme $A^{[n+1]}$.)

It follows from \cite[Proposition 2.9]{reede_stable_2022} that if a vector bundle $E\not\cong \cO_A$ is slope stable on $A$ with respect to the ample class $H\in \NS(A)$, then the induced tautological bundle $E^{(n)}$ is slope stable on $\KA$ with respect to an ample class $H_{\epsilon}$. Here $H_{\epsilon}$ is of the form $H_K+\epsilon D\in N^1(\KA)_{\RR}$ for some (in fact any) ample class $D\in \NS(\KA)$ and all $\epsilon$ in a small interval $(0,\epsilon_D)$. In this section we want to study the slope stability of the missing case $\cO_A^{(n)}$ and certain related vector bundles.

\subsection{Stability of the components}

We start with the following negative result:
\begin{lemma}\label{lem:negative-result}
	The tautological bundle $\cO_A^{(n)}$ associated to $\cO_A$ is slope unstable with respect to any ample class $D\in \NS(\KA)$.
\end{lemma}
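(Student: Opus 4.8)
The plan is to exploit the splitting $\cO_A^{(n)}=\cO_{\KA}\oplus Q$ from \autoref{thm:sec2} and simply observe that a decomposable bundle is automatically destabilized by one of its direct summands, provided the summands have different slopes. So the first step is to compute the slopes of $\cO_{\KA}$ and of $Q$ with respect to an arbitrary ample class $D\in\NS(\KA)$ and check they are not equal. Using \eqref{eq:tautChern} with $E=\cO_A$, we have $\rk(\cO_A^{(n)})=n+1$ and $\operatorname{c}_1(\cO_A^{(n)})=\operatorname{c}_1(\cO_A)_K-\delta=-\delta$, since $\operatorname{c}_1(\cO_A)=0$. As $\cO_{\KA}$ has trivial first Chern class, additivity of rank and first Chern class in the short exact sequence (really the direct sum) forces $\rk(Q)=n$ and $\operatorname{c}_1(Q)=-\delta$.

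Next I would write down the two slopes explicitly. For any ample $D$ on the $2n$-dimensional variety $\KA$,
\[
\mu_D(\cO_{\KA})=\frac{\operatorname{c}_1(\cO_{\KA})\cdot D^{2n-1}}{\rk(\cO_{\KA})}=0,
\qquad
\mu_D(Q)=\frac{\operatorname{c}_1(Q)\cdot D^{2n-1}}{\rk(Q)}=\frac{-\delta\cdot D^{2n-1}}{n}.
\]
So the two slopes differ precisely when $\delta\cdot D^{2n-1}\neq 0$. Since $\delta$ is (up to a factor of $2$) the class of the exceptional divisor of the Hilbert–Chow morphism, it is a non-zero effective class that is not numerically trivial; intersecting it with $D^{2n-1}$ for $D$ ample gives a strictly positive number because $D^{2n-1}$ is a movable (in particular nef-positive) curve class and $\delta$ is represented by an effective divisor whose support is not contracted by any ample linear system. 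Concretely, one can take a general complete intersection curve $C$ in $|mD|^{2n-1}$ for $m\gg 0$; this curve meets the big divisor associated to $\delta$ positively, and in fact one knows $\delta$ restricted to such a curve has positive degree. Hence $\delta\cdot D^{2n-1}>0$, so $\mu_D(Q)<0=\mu_D(\cO_{\KA})$.

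From here the conclusion is immediate: the subsheaf $\cO_{\KA}\subset \cO_A^{(n)}$ has rank $0<1<n+1$ and slope $0>\mu_D(Q)$. Since $\mu_D(\cO_A^{(n)})$ is the weighted average $\frac{1}{n+1}\bigl(0+n\mu_D(Q)\bigr)=\frac{n}{n+1}\mu_D(Q)<0$, we get $\mu_D(\cO_{\KA})=0>\mu_D(\cO_A^{(n)})$, violating the definition of slope semistability. Therefore $\cO_A^{(n)}$ is slope unstable with respect to every ample $D\in\NS(\KA)$. The only genuinely non-formal point — and thus the main obstacle — is justifying $\delta\cdot D^{2n-1}\neq 0$ for all ample $D$, i.e.\ that the class $\delta$ is not numerically trivial against positive top powers of ample classes; this follows from the fact that $\delta$ spans a genuine extra ray of $\NS(\KA)$ beyond the image of $\NS(A)$ in the decomposition \eqref{neron-kummer} and is (a multiple of) the class of a non-trivial effective divisor, so its intersection with the interior movable curve class $D^{2n-1}$ is strictly positive. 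Everything else is bookkeeping with \eqref{eq:tautChern}.
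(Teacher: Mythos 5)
Your proposal is correct and follows essentially the same route as the paper: split $\cO_A^{(n)}=\cO_{\KA}\oplus Q$ via \autoref{thm:sec2}, compute $\operatorname{c}_1(\cO_A^{(n)})=-\delta$ from \eqref{eq:tautChern}, note that $\delta\cdot D^{2n-1}>0$ because $\delta$ is represented by a non-trivial effective divisor and $D$ is ample, and conclude that the rank-one summand $\cO_{\KA}$ of slope $0$ destabilizes. The extra discussion of complete-intersection curves is unnecessary; the effectivity of $\delta$ against the ample class is all the paper uses as well.
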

\begin{proof}
	As the class $\delta$ can be represented by an effective divisor we have $\delta D^{2n-1}>0$ for any ample class $D\in \NS(\KA)$. Hence, $\mu_D(\cO_A^{(n)})<0$ by \eqref{eq:tautChern}. 
	
	By \autoref{thm:sec2} we have $\cO_A^{(n)}\cong \reg_{\KA}\oplus Q$. As $\mu_D(\cO_{\KA})=0$, the summand $\cO_{\KA}$ is a destabilising subbundle of $\cO_A^{(n)}$.
\end{proof}

Although the bundle $\cO_A^{(n)}$ is slope unstable for any ample class $D\in \NS(\KA)$, we will show that in the decomposition
\begin{equation*}
	\cO_A^{(n)}=\cO_{\KA}\oplus Q
\end{equation*}
both summands are slope stable with respect to certain curve classes.

While this result is obvious for the line bundle $\cO_{\KA}$, the proof of slope stability of $Q$ needs some work. 
To help with this task we introduce an adapted version of Stapleton's Hilbert scheme-to-product functor.  
To define the functor, we need the following diagram exhibiting the relations of some relevant schemes:
\begin{equation}
	\label{eqn:Notations}
	\begin{tikzcd}
		P_n(A)_\circ \ar[r, "\sigma_\circ"] \ar[d, hook', "j_P"'] & S_n(A)_\circ \ar[d, hook', "j_S"'] & \KA_\circ \ar[l, "\mathsf{HC}_\circ"'] \ar[d, hook', "j_K"'] \\
		P_n(A) \ar[r, "\sigma_K"] \ar[d, hook', "\tau"'] & S_n(A) \ar[d, hook'] & \KA \ar[l, "\mathsf{HC}_K"'] \ar[d, hook', "\iota"'] \\
		A^{n+1} \ar[r, "\sigma"] & A^{(n+1)} & A^{[n+1]} \ar[l, "\mathsf{HC}"']
	\end{tikzcd}
\end{equation}

Here $\tau$ and $\iota$ are embeddings of a zero fiber of the addition morphism to $A$; $\sigma$ and $\sigma_K$ are quotients by the symmetric group $\sym$; $\mathsf{HC}$ and $\mathsf{HC}_K$ are Hilbert-Chow morphisms. 

Furthermore $j_P$, $j_S$ and $j_K$ are embeddings of open subschemes parametrizing $n+1$ distinct (ordered or unordered) points in $A$. The morphism $\sigma_\circ$ (the restriction of $\sigma$) is a free $\sym$-quotient and $h_\circ$ (the restriction of $\mathsf{HC}_K$) is an isomorphism.

We define the adapted version of Stapleton's functor by:
\begin{equation}\label{def:StapletonFunctor}
	(\_)_P: \Coh(\KA) \rightarrow \Coh_{\sym}(\PA),\,\,\, F\mapsto (j_P)_{*}\sigma_\circ^{*}(\mathsf{HC}_{\circ}^{-1})^{*}j_K^{*}F.
\end{equation}
This defines a left exact functor such that $F_P$ is torsion free (resp. reflexive) if $F$ is. 

\begin{lemma}
We have $Q_P\cong G$ where $G\coloneqq \reg_P\otimes \rho$ as in \autoref{sec:extension}.    
\end{lemma}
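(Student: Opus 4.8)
The plan is to compare the two descriptions of the tautological bundle $\reg_A^{(n)}$: the intrinsic splitting $\reg_A^{(n)}\cong\reg_{\KA}\oplus Q$ from \autoref{thm:sec2}, and the image under Stapleton's functor $(\_)_P$. First I would verify that the functor $(\_)_P$ is compatible with the tautological construction, i.e.\ that $(\reg_A^{(n)})_P\cong \reg^{\{n\}}$ where $\reg^{\{n\}}=\Ind_{\sym_n}^{\sym_{n+1}}\reg_P$. This is the geometric heart of the argument: on the open loci where the points are distinct, $\KA_\circ\cong S_n(A)_\circ$ via $\mathsf{HC}_\circ$, and pulling back along $\sigma_\circ$ turns the push-forward $q_*p^*\reg_A$ of the universal family into the $\sym$-equivariant sheaf $\bigoplus_{i=1}^{n+1}\overline p_i^*\reg_A=\reg_P^{\oplus(n+1)}$ on $P_n(A)_\circ$, equivariantly identified with $\Ind_{\sym_n}^{\sym_{n+1}}\overline p_{n+1}^*\reg_A$; then $(j_P)_*$ extends this over all of $P$ since $\reg_P$ is already a vector bundle on the smooth variety $P$ and the complement of the open locus has codimension $\ge 2$ (so the reflexive extension is forced), giving $\reg^{\{n\}}$.

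Next I would use \autoref{lem:split}, which gives $\reg^{\{n\}}\cong \reg_P\oplus G$ with $G=\reg_P\otimes\rho$, together with the fact that $(\_)_P$ is additive (being a composition of pullbacks and a push-forward of a locally free sheaf, it preserves direct sums) and sends $\reg_{\KA}$ to $\reg_P$ (apply the identification above to the trivial rank-one summand, or simply note $j_K^*\reg_{\KA}=\reg$, $(\mathsf{HC}_\circ^{-1})^*\reg=\reg$, $\sigma_\circ^*\reg=\reg$, $(j_P)_*\reg=\reg_P$). Therefore $(\reg_A^{(n)})_P\cong (\reg_{\KA})_P\oplus Q_P\cong \reg_P\oplus Q_P$, and comparing with $\reg^{\{n\}}\cong\reg_P\oplus G$ we would like to cancel the $\reg_P$ summand and conclude $Q_P\cong G$. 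To make the cancellation rigorous I would either invoke Krull–Schmidt for $\sym$-equivariant vector bundles on the projective variety $P$ (the endomorphism algebras are finite-dimensional, so the category has unique decomposition into indecomposables), or argue more directly: the splitting on the right-hand side is the isotypic decomposition with respect to the residual $\sym$-action, so the trivial-isotypic part matches $\reg_P$ on both sides and the complementary part matches, giving $Q_P\cong G$ canonically.

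The main obstacle I expect is the first step — establishing $(\reg_A^{(n)})_P\cong\reg^{\{n\}}$ equivariantly, i.e.\ keeping careful track of the $\sym$-linearization through the chain $j_K^*$, $(\mathsf{HC}_\circ^{-1})^*$, $\sigma_\circ^*$, $(j_P)_*$. The subtlety is that the universal family $\cZ\subset A\times\KA$ pulls back, over the distinct-points locus, to the graph of the $n+1$ ordered points, and one must check that the permutation action on the fibers of $q$ matches the $\sym$-action on $P$ permuting the $\overline p_i$; this is exactly the kind of bookkeeping that underlies Stapleton's original comparison on Hilbert schemes, adapted here to $\KA$. Once the open-locus identification is in place, the extension over $P$ via $(j_P)_*$ is automatic because both sides are reflexive (indeed locally free) and agree in codimension one. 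An alternative, shorter route that sidesteps the universal family: observe that $(\_)_P$ restricted to bundles is (by construction) inverse to $\Psi$ composed with restriction, in the sense that $\Ho^*_\sym((F)_P)\cong\Ho^*(F)$ and more precisely $(\_)_P$ recovers the equivariant model of $F$ used in the derived McKay correspondence of \autoref{prop:tautMcKay}; then $(Q)_P=(\Psi(G))_P\cong G$ directly. I would present whichever of these the paper's conventions make cleanest, but flag the equivariant compatibility as the point needing care.
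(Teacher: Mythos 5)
Your main argument is correct and is essentially the paper's proof: the paper likewise establishes $(\reg_A^{(n)})_P\cong\reg^{\{n\}}$ (citing \cite[Lemma 1.1]{stapleton_taut_2016} and \cite[Proof of Proposition 1.8]{reede_zhang_22} for the open-locus identification you spell out) and $(\reg_{\KA})_P\cong\reg_P$ via the codimension-$2$ complement, then cancels using additivity of $(\_)_P$ and the Krull--Schmidt property of $\Coh_\sym(\PA)$. Do discard your proposed ``alternative, shorter route,'' since $(\_)_P$ is \emph{not} inverse to $\Psi$ on bundles in general --- the paper's own remark following this lemma gives $\reg_{\KA}(D)$ as a sheaf on which the two functors disagree, so the agreement on $Q$ is exactly what must be proved rather than assumed.
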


\begin{proof}
By \cite[Lemma 1.1]{stapleton_taut_2016} and \cite[Proof of Proposition 1.8]{reede_zhang_22}, we have $(\reg_A^{(n)})_P\cong \reg^{\{n\}}$ where $\reg^{\{n\}}\cong \Ind_{\sym_n}^{\sym_{n+1}} \reg_P\cong \reg_P\otimes R$. 
Furthermore, using that the complement of the open subset $P_n(A)_\circ$ in $P=P_n(A)$ is of codimension 2, we also have $(\reg_{\KA})_P\cong \reg_P$.

The functor $(\_)_P$ is compatible with direct sums and $\Coh_\sym(\PA)$ is Krull-Schmidt. Hence, the assertion follows by comparing the two direct sums $\reg_A^{(n)}\cong \reg_{\KA}\oplus Q$ and \eqref{eq:splitOn}. 
\end{proof}

\begin{remark}
We see that the three sheaves relevant for us, $\reg_A^{(n)}$, $\reg_{\KA}$, and $Q$, have the same images under the two functors $(\_)_P$ and $\Psi^{-1}$ where $\Psi$ is the derived McKay correspondence of \autoref{prop:tautMcKay}. Hence, the reader might wonder why we even introduce both functors. The reason is that they have two different sets of useful properties. The functor $\Psi$ (or $\Psi^{-1}$) is an equivalence, and hence can be used to compute extension groups, as done in \autoref{sec:extension}. On the other hand, $(\_)_P$ preserves subsheaves, ranks, and some compatibility with subsheaves, all of which we use in the proof of \autoref{lem:Q-stable-HK} below. 

Also note that there are many vector bundles on $\KA$ on which the two functor disagree. One such example is $\reg_{\KA}(D)$ where $D=\KA\setminus \KA_\circ$ is the \emph{boundary divisor} parametrizing non-reduced subschemes of $A$.
\end{remark}

In the following, we will often consider the movable curve class
\[
\gamma_0\coloneqq (H_K)^{2n-1}\,.
\]

\begin{lemma}\label{lem:Q-stable-HK}
	With the above notation, $Q$ is slope stable with respect to $\gamma_0$. 
\end{lemma}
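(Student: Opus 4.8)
The plan is to prove slope stability of $Q$ with respect to $\gamma_0 = (H_K)^{2n-1}$ by transporting the problem to the $\sym$-equivariant setting on $P$ via the functor $(\_)_P$, where we know $Q_P \cong G = \reg_P \otimes \rho$. First I would record the relevant numerical data: by \eqref{eq:tautChern} we have $\rk Q = n$ and $\operatorname{c}_1(Q) = -\delta$, so $\mu_{\gamma_0}(Q) = -(\delta \cdot (H_K)^{2n-1})/n$. Since $H_K$ is the pullback of a class from $S_n(A)$ which in turn comes from $A$, and $\delta$ is (a multiple of) the exceptional/boundary class, one checks $\delta \cdot (H_K)^{2n-1} = 0$: indeed $H_K$ is a pullback along the generically finite Hilbert--Chow morphism, the boundary divisor is contracted appropriately, so this intersection vanishes. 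Hence $\mu_{\gamma_0}(Q) = 0$, and it suffices to show that every proper subsheaf $E \subset Q$ of rank $0 < r < n$ has $\operatorname{c}_1(E) \cdot (H_K)^{2n-1} < 0$, i.e. $\mu_{\gamma_0}(E) < 0 = \mu_{\gamma_0}(Q)$.

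The key step is to use that $(\_)_P$ is left exact, preserves ranks, and sends the inclusion $E \hookrightarrow Q$ to an inclusion $E_P \hookrightarrow Q_P = G = \reg_P \otimes \rho$ of $\sym$-equivariant sheaves with $\rk E_P = r$. Now $\reg_P \otimes \rho$ is a trivial (in particular globally generated, semistable of slope $0$) bundle on the abelian variety $P \cong A^n$ with respect to the ample class $H_P$ — more precisely, as a non-equivariant sheaf it is $\reg_P^{\oplus n}$, which is $\mu_{H_P}$-semistable of slope $0$, and any subsheaf of rank $r$ has $\operatorname{c}_1 \cdot (H_P)^{2n-1} \le 0$. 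So $\mu_{H_P}(E_P) \le 0$. The remaining point is to relate $\operatorname{c}_1(E_P) \cdot (H_P)^{2n-1}$ on $P$ back to $\operatorname{c}_1(E) \cdot (H_K)^{2n-1}$ on $\KA$ and to upgrade the inequality from $\le 0$ to $< 0$. For the comparison: on the open locus $P_\circ \xrightarrow{\sigma_\circ} S_\circ \xleftarrow{\sim} \KA_\circ$ the map $\sigma_\circ$ is an étale $\sym$-cover, the complements have codimension $2$, $H_P|_{P_\circ} = \sigma_\circ^* (H_K|_{\KA_\circ})$ up to the identification, and $\operatorname{c}_1(E_P)|_{P_\circ} = \sigma_\circ^* (\operatorname{c}_1(E)|_{\KA_\circ})$; pushing down, $\operatorname{c}_1(E_P) \cdot (H_P)^{2n-1} = |\sym| \cdot \operatorname{c}_1(E) \cdot (H_K)^{2n-1}$, so the sign is preserved. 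For strictness: if $\mu_{H_P}(E_P) = 0$ then $E_P$ would be, up to saturation, a subbundle of $\reg_P^{\oplus n}$ of the same slope, forcing $E_P$ (saturated) to be of the form $\reg_P \otimes W$ for a subspace $W \subset \rho$; but for the $\sym$-equivariant structure to descend, $W$ must be a $\sym$-subrepresentation of the standard representation $\rho$, which is irreducible, so $W = 0$ or $W = \rho$ — contradicting $0 < r < n$. Hence $\mu_{H_P}(E_P) < 0$, giving $\mu_{\gamma_0}(E) < 0$.

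A subtlety is that a priori one only knows $Q$ is slope \emph{stable} with respect to the ample class $H_\epsilon = H_K + \epsilon D$ (or with respect to $\gamma_0$); but note $\gamma_0 = (H_K)^{2n-1}$ is only a \emph{movable} curve class, not $\Phi$ of an ample class, since $H_K$ is merely big and nef, so one genuinely needs the movable-curve formalism of \cite{greb_movable_2016}, which the paper has set up. The main obstacle I anticipate is the bookkeeping in the intersection-number comparison across the rational maps in diagram \eqref{eqn:Notations} — in particular being careful that $H_K$ is a pullback and that the boundary locus has codimension $\ge 2$ so that divisor classes and their top self-intersections with $H_K$ transfer cleanly between $P$, $S_n(A)$, and $\KA$ — together with verifying the descent argument for the strictness (that a saturated subsheaf of $\reg_P \otimes \rho$ of slope $0$ must be $\reg_P \otimes W$ for a $\sym$-subrepresentation $W$). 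Once those are in place, the irreducibility of the standard representation $\rho$ closes the argument immediately.
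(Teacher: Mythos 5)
Your proposal is correct and follows essentially the same route as the paper: transport a destabilizing subsheaf to a $\sym$-equivariant subsheaf of $Q_P\cong G\cong\reg_P\otimes\rho$ via $(\_)_P$, use that $G$ is non-equivariantly the trivial bundle $\reg_P^{\oplus n}$ (semistable of slope $0$ for $H_P$), and invoke the irreducibility of $\rho\cong \Ho^0(G)$ to rule out the borderline case. The paper handles that borderline case by showing a maximal-slope Jordan--H\"older factor is $\reg_P$, so $\Ho^0(U_P)\neq 0$, and then uses global generation of $G$; your classification of saturated slope-$0$ subsheaves of $\reg_P^{\oplus n}$ as $\reg_P\otimes W$ is a true and equivalent way to reach the same contradiction.
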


\begin{proof}
Let us assume for a contradiction that $Q$ is not stable with respect to $\gamma_0$. Then $Q$ admits a subsheaf $U$ with $\rk(U) < \rk(Q)=n$ and $\mu_{\gamma_0}(U) \geq \mu_{\gamma_0}(Q)$. It transforms to an $\mathfrak{S}_{n+1}$-invariant reflexive subsheaf $U_P$ of $Q_P\cong G$ such that $\rk(U_P) < \rk(G)=n$, and $\mu_{H_P}(U_P) \geq \mu_{H_P}(G)$; for the latter one uses \cite[Equation 2.4]{reede_stable_2022} and the fact that $\mu_{\gamma_0}(\_)=\mu_{H_K}(\_)$. Let $W$ be a Jordan-H\"older factor of $U_P$ of the maximal slope. Note that $W$ does not need to be $\sym_{n+1}$-invariant. Then on the one hand we have
$$ \mu_{H_P}(W) \geq \mu_{H_P}(U_P) \geq \mu_{H_P}(G) = 0\,. $$
On the other hand, we have (as non-equivariant bundles) $Q_P\cong G\cong \reg_{P_n(A)}^{\oplus n}$. As $W\neq 0$, some of the compositions 
\[
W\hookrightarrow Q_P\cong \reg_{P_n(A)}^{\oplus n}\twoheadrightarrow \reg_{P_n(A)}
\]
of the embedding of the subsheaf of $\reg_{P_n(A)}^{\oplus n}$ with the projection to one of the summands must be non-zero. By the stability of $W$ and $\cO_{P_n(A)}$ and the comparison of their slopes, we conclude that  $W \cong \cO_{\PA}$. 
Hence, $\Ho^0(U_P)\neq 0$. As $U_P\subset G$ is equivariant, $\Ho^0(U_P)\subset \Ho^0(G)\cong \rho$ is a subrepresentation. As $\rho$ is irreducible, we get $\Ho^0(U_P)= \Ho^0(G)$. As a non-equivariant sheaf, $G\cong \reg_{\PA}^{\oplus n}$ is generated by global sections. Hence, $\Ho^0(U_P)= \Ho^0(G)$ implies $U_P=G$, contradicting $\rk U_P<\rk G$.
\end{proof}

\subsection{Stability of extension bundles with respect to curve classes}

Now we come to one the main results of this section. As we have seen in \autoref{lem:negative-result}, the bundle $\cO_A^{(n)}$ is slope unstable for any ample class $D\in \NS(\KA)$ while, by \cite[Theorem 2.10]{reede_stable_2022}, all of its deformations $L^{(n)}$ for $[\reg_A]\neq [L]\in\Pic^0(A)$ are.  

On the other hand, in \autoref{lem:Q-stable-HK}, we proved that $Q$ is slope stable with respect to the movable curve class $\gamma_0$. Hence, one may wonder if a non-trivial extension $F_b$, defined by
\begin{equation*}
	\begin{tikzcd}
		0 \arrow[r] & Q \arrow[r] & F_b \arrow[r] & \cO_{\KA} \arrow[r] & 0
	\end{tikzcd}
\end{equation*} 
for $b\in \Ext^1(\cO_{\KA},Q)$ is slope stable with respect to some ample class in $\NS(\KA)$. Such an $F_b$ has the same numerical invariants as $\cO_A^{(n)}$ so that we could replace the latter in a moduli by $F_b$ or even by a family of $\left\lbrace F_b\right\rbrace _{b\in \Ext^1(\cO_{\KA},Q)}$.

\autoref{lem:hG} shows that we have $\ext^1(\cO_{\KA},Q)=2$. Thus there are in fact non-trivial extensions of $Q$ by $\cO_{\KA}$ to work with. We are mainly interested in the isomorphism class of such an extension $F_b$. For these we have the following lemma:

\begin{lemma}\label{lem:P1-family}
For $b,b'\in \Ext^1(\cO_{K_n(A)},Q)$, we have that $F_b\cong F_{b'}$ if and only if $[b]=[b']\in \PP^1\coloneqq \PP(\Ext^1(\cO_{K_n(A)},Q))$. Moreover, there is a $\PP^1$-family of extensions
	\begin{equation}\label{eqn:universal-O-Q}
		0 \rightarrow \pi_1^{*}Q \otimes \pi_2^{*}\cO_{\PP^1}(1) \rightarrow \mathcal{F} \rightarrow \pi_1^{*}\cO_{K_n(A)} \rightarrow 0
	\end{equation}
	on $K_n(A)\times \PP^1$ such that $\mathcal{F}_{[b]}\cong F_b$ for every $b\in \Ext^1(\cO_{K_n(A)},Q)$. 
\end{lemma}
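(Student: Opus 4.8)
The statement has two parts: first, that the isomorphism class of $F_b$ depends only on $[b]\in\PP^1$, and second, the existence of the universal $\PP^1$-family \eqref{eqn:universal-O-Q}. The plan is to treat these together by constructing the family first and then reading off the isomorphism-class statement from it, together with a count of $\Hom$'s. More precisely, the extension class $b\in\Ext^1(\cO_{\KA},Q)$ is, by definition, an element of $\Ho^1(\KA,\sHom(\cO_{\KA},Q))=\Ho^1(\KA,Q)$, a $2$-dimensional vector space by \autoref{lem:hG}. I would invoke the standard construction of the universal extension: on $\KA\times\PP^1$ with $\PP^1=\PP(\Ext^1(\cO_{\KA},Q))$ and projections $\pi_1,\pi_2$, there is a tautological class in
\[
\Ext^1_{\KA\times\PP^1}\bigl(\pi_1^*\cO_{\KA},\,\pi_1^*Q\otimes\pi_2^*\cO_{\PP^1}(1)\bigr)\cong \Ho^0\bigl(\PP^1,\Ext^1_{\KA}(\cO_{\KA},Q)\otimes\cO_{\PP^1}(1)\bigr)\cong\Hom\bigl(\Ext^1(\cO_{\KA},Q),\Ext^1(\cO_{\KA},Q)\bigr),
\]
and I take the class corresponding to the identity. (The isomorphism of these groups follows from the relative local-to-global Ext spectral sequence together with $\Ho^0(\PP^1,\cO(1))\cong\Ext^1(\cO_{\KA},Q)^\vee\otimes\text{(stuff)}$ — I would spell out the twist so the $\cO_{\PP^1}(1)$ in \eqref{eqn:universal-O-Q} comes out correctly, the point being that $\PP^1$ is a projectivization so a global section of $\cO(1)$ is a linear functional.) This tautological class defines the short exact sequence \eqref{eqn:universal-O-Q}, and restricting to $\KA\times\{[b]\}$ recovers the extension $F_b$ by construction, since the identity map sends the line $\CC\cdot b$ to $\CC\cdot b$.

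For the isomorphism-class statement, one direction is immediate: if $b'=\lambda b$ for $\lambda\in\CC^*$, then multiplication by $\lambda$ on the middle term of the extension induces an isomorphism of short exact sequences, hence $F_b\cong F_{\lambda b}$; thus $[b]=[b']$ implies $F_b\cong F_{b'}$. For the converse I would argue via $\Hom$-groups. Suppose $F_b\cong F_{b'}$ with both $b,b'\neq 0$. Using $\Hom_\sym(M,M)=\CC$ from the finishing argument of \autoref{thm:sec2} (equivalently $\Hom(F_b,F_b)=\CC$, i.e. $F_b$ is simple — transport along $\Psi$), together with $\Hom(\cO_{\KA},Q)=\Ho^0(Q)=h^0(Q)=0$ (the first entry of $\vec h_\sym(G)$ in \eqref{eq:hG}) and $\Hom(Q,\cO_{\KA})=0$ (the first entry of $\vext_\sym(G,\reg_P)$ in \autoref{lem:hextG}), any isomorphism $\varphi\colon F_b\to F_{b'}$ must be compatible with the sub-line-bundle $Q$ and the quotient $\cO_{\KA}$ up to scalars: the composite $Q\hookrightarrow F_b\xrightarrow{\varphi}F_{b'}\twoheadrightarrow\cO_{\KA}$ lies in $\Hom(Q,\cO_{\KA})=0$, so $\varphi$ restricts to an endomorphism of $Q$, which is a scalar $\lambda$ since $Q$ is stable hence simple by \autoref{lem:Q-stable-HK}; similarly $\varphi$ descends to a scalar $\mu$ on $\cO_{\KA}$. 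Chasing the extension classes through this diagram of short exact sequences then gives $\mu\, b = \lambda\, b'$ in $\Ext^1(\cO_{\KA},Q)$ (the standard naturality of the connecting map), whence $[b]=[b']$.

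The main obstacle is getting the identification of the relative $\Ext$ group with $\End(\Ext^1(\cO_{\KA},Q))$ exactly right, including the precise twist by $\cO_{\PP^1}(1)$ versus $\cO_{\PP^1}(-1)$; this is where a sign or dualization error would creep in. Concretely I would use the relative local-to-global spectral sequence for $\pi_2\colon\KA\times\PP^1\to\PP^1$, note that $R^i\pi_{2*}\bigl(\sHom(\pi_1^*\cO_{\KA},\pi_1^*Q)\bigr)\otimes\cO_{\PP^1}(1)=\Ho^i(\KA,Q)\otimes_\CC\cO_{\PP^1}(1)$ is the trivial bundle with fiber $\Ho^i(\KA,Q)$ twisted by $\cO(1)$, and that $\Ho^0(\PP^1,\underline{\Ho^1(\KA,Q)}\otimes\cO(1))=\Ho^1(\KA,Q)\otimes\Ho^0(\PP^1,\cO(1))=\Ho^1(\KA,Q)\otimes\Ho^1(\KA,Q)^\vee=\End(\Ho^1(\KA,Q))$, using that for $\PP(W)$ one has $\Ho^0(\PP(W),\cO(1))\cong W^\vee$ in the Grothendieck convention (and $\cong W$ in the classical convention — I would fix one convention explicitly). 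The vanishing $\Ext^2_{\KA}(\cO_{\KA},Q)$-contributions and the $\Ho^0$ being what kills the lower spectral-sequence terms should be checked but are routine. Everything else — the naturality-of-connecting-map computation and the $\Hom$ vanishings — is bookkeeping we have already set up.
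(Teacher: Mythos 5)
Your proposal is correct and follows essentially the same route as the paper: the paper simply cites \cite[Lemma 3.3]{nara_ext} for the statement on isomorphism classes and \cite[Section 4]{lange_extensions_1983} for the universal family, and the hypotheses it checks ($\Hom(Q,\cO_{\KA})=0$ and simplicity of $Q$ and $\cO_{\KA}$, all from \autoref{lem:hextG}) are exactly the inputs your hand-written arguments use. Your unpacking of both citations (the triangularity of any isomorphism $F_b\to F_{b'}$ forcing $\mu b=\lambda b'$, and the K\"unneth/tautological-class construction of the family with the correct $\cO_{\PP^1}(1)$ twist) is sound.
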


\begin{proof}
For the first assertion, by \cite[Lemma 3.3]{nara_ext}, we only need to check that that $\Hom(Q,\cO_{\KA})=0$ and that both $\cO_{\KA}$ and $Q$ are simple. All of this was already shown in \autoref{lem:hextG} using the derived McKay correspondence.
  
The second part follows from the general results on universal families of extensions of \cite[Section 4]{lange_extensions_1983}.
\end{proof}

\begin{remark}
    From now on, by abuse of notation, we don't distinguish $b$ and its class $[b]$ in $\PP(\Ext^1(\cO_{K_n(A)},Q))$.
\end{remark}

Before we will be able to prove stability of the $F_b$ with respect to an ample class, we will need to prove stability with respect to a certain curve class.

\begin{lemma}\label{cone_int}
	Let $C$ be a convex cone. If $c\in C$ and $d\in C^{\circ}$ then $c+d\in C^{\circ}$.
\end{lemma}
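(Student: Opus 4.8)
\textbf{Proof approach for \autoref{cone_int}.}

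The statement is an elementary fact about convex cones, so the plan is to argue directly from the definitions. Recall that $d \in C^\circ$ means that $d$ has an open neighbourhood $U$ entirely contained in $C$. I want to show that $c + d$ also has such a neighbourhood. The natural candidate is the translate $c + U$: since translation by $c$ is a homeomorphism, $c + U$ is an open neighbourhood of $c + d$, so it suffices to check $c + U \subseteq C$.

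First I would verify the inclusion $c + U \subseteq C$. Take any point $x \in c + U$, so $x = c + y$ with $y \in U \subseteq C$. Since $C$ is a convex cone, it is closed under addition of its elements: indeed, for $c, y \in C$ we have $\tfrac{1}{2}(c + y) \in C$ by convexity, and then $c + y = 2 \cdot \tfrac{1}{2}(c+y) \in C$ because $C$ is stable under positive scaling. Hence $x \in C$, which gives $c + U \subseteq C$ and therefore $c + d \in C^\circ$.

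This argument is completely routine; there is no real obstacle. The only point worth stating carefully is that by $C^\circ$ we mean the topological interior (in the ambient real vector space $N_1(X)_\RR$ or $N^1(X)_\RR$, which is finite-dimensional, so all norms and the attendant topology agree), and that "convex cone" is used in the sense that $C$ is closed under both convex combinations and multiplication by positive scalars, which together yield closure under addition as used above.
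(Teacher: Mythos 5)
Your proof is correct and follows essentially the same route as the paper's: both arguments translate an open set containing $d$ inside $C$ by $c$, note that the translate is still open and still contained in $C$ because a convex cone is closed under addition, and conclude $c+d\in C^\circ$. Your explicit justification that convexity plus positive scaling yields closure under addition is a welcome detail the paper leaves implicit.
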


\begin{proof}
	Since $d\in C^{\circ}$ we find $c+d\in c+C^{\circ} \subset C$ as $C$ is closed under addition by convexity. On the other hand $c+C^{\circ}$ is open in $C$ as a translate of an open subset, so $c+C^{\circ}\subset C^{\circ}$ and therefore $c+d\in C^{\circ}$.
\end{proof}

\begin{corollary}\label{cor:big}
	For every $\epsilon >0$ and every $\beta\in \mathrm{Big}(\KA)$, the class $\gamma_{\epsilon}\coloneqq  \gamma_0 + \epsilon \beta$ is big.
\end{corollary}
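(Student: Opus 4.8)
The plan is to reduce the statement to the elementary fact recorded in \autoref{cone_int}, applied to the big cone $\mathrm{Big}(\KA)\subset N^1(\KA)_\RR$. The key point to establish first is that $\gamma_0 = (H_K)^{2n-1}$ lies in the \emph{closure} of the big cone. Since $H_K$ is semi-ample (big and nef) by the remark following \eqref{neron-kummer}, it is a limit of ample classes $H_K + \tfrac1m D$ for $D$ ample, and raising to the $(2n-1)$-st power is continuous, so $\gamma_0$ is a limit of classes $(H_K + \tfrac1m D)^{2n-1}$. Each of the latter is the $(2n-1)$-st power of an ample class on a $2n$-dimensional variety, hence ample (in the sense of the curve-class discussion after \eqref{neron-kummer}, where $\Phi(H)=H^{2n-1}$ is shown to be movable; in fact such a self-intersection of an ample class is big as a curve class). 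Therefore $\gamma_0 \in \overline{\mathrm{Big}(\KA)}$.

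The next step is to invoke \autoref{cone_int} with $C = \overline{\mathrm{Big}(\KA)}$, which is a closed convex cone, noting that its interior is exactly $\mathrm{Big}(\KA)$ since the big cone is open. Taking $c = \gamma_0 \in C$ and $d = \epsilon\beta \in C^\circ = \mathrm{Big}(\KA)$ — here one uses that the big cone is stable under positive scaling, so $\epsilon\beta$ is big whenever $\beta$ is — the lemma gives $\gamma_\epsilon = \gamma_0 + \epsilon\beta \in C^\circ = \mathrm{Big}(\KA)$, which is the assertion.

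I expect the only subtle point to be the precise bookkeeping of whether one works in $N^1(\KA)_\RR$ or in $N_1(\KA)_\RR$, i.e.\ whether ``big'' here refers to a divisor class or to the curve class $\gamma_0$; the statement is phrased with $\beta\in\mathrm{Big}(\KA)$ and $\gamma_\epsilon$ big, so both live on the same side and the argument above applies verbatim once one fixes that convention. The genuinely load-bearing input is that a self-intersection power $(H')^{2n-1}$ of an ample class $H'$ lies in the big cone of curves (equivalently, pairs positively against every nonzero effective divisor and spans a full-dimensional cone under perturbation); this is standard, and combined with the continuity of $H'\mapsto (H')^{2n-1}$ and the semi-ampleness of $H_K$ it places $\gamma_0$ on the boundary of that cone, after which \autoref{cone_int} does the rest.
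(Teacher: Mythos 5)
Your proposal is correct and follows essentially the same route as the paper: the paper also reduces to \autoref{cone_int}, taking $C=\Mov(\KA)$ (which equals your $\overline{\mathrm{Big}(\KA)}$, since $\mathrm{Big}(\KA)=\Mov(\KA)^{\circ}$), $c=\gamma_0=(H_K)^{2n-1}\in C$ because $H_K$ is a limit of ample classes, and $d=\epsilon\beta\in C^{\circ}$. The only cosmetic difference is that the paper needs merely that $(H')^{2n-1}$ is movable for $H'$ ample, whereas you invoke the slightly stronger (also true, but unnecessary) fact that it is big.
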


\begin{proof}
	We have to show that $\gamma_{\epsilon}\in \mathrm{Big}(\KA)=\Mov(\KA)^{\circ}$.
	
	Since $\beta\in \mathrm{Big}(\KA)$ we know that for every $\epsilon >0$ we have $\epsilon\beta\in \mathrm{Big}(\KA)$. We certainly also have $\gamma_0\in \Mov(\KA)$ as $\gamma_0=(H_K)^{2n-1}$ and $H_K\in \NS(\KA)$ is a limit of ample classes. Now apply \autoref{cone_int} to $C=\Mov(\KA)$, $c=\gamma_0$ and $d=\epsilon\beta$. 
\end{proof}

 We also note the following more or less easy observation: 
 
\begin{lemma}\label{lem:exception_Q}
	For each $b \in \PP^1$ and every proper subsheaf $0 \subsetneq E \subsetneq F_b$, we have 
	$$ \mu_{\gamma_0}(E) < \mu_{\gamma_0}(F_b) = 0 \qquad \text{unless} \qquad E = Q. $$
\end{lemma}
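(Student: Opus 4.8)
The plan is to analyze a potential destabilizing subsheaf $E \subset F_b$ by tracking it through the short exact sequence $0 \to Q \to F_b \to \reg_{\KA} \to 0$. First I would consider the composition $E \hookrightarrow F_b \twoheadrightarrow \reg_{\KA}$ and distinguish two cases according to whether this map is zero or not. If it is zero, then $E \subseteq Q$, so either $E = Q$ (the excluded case) or $0 \subsetneq E \subsetneq Q$ is a proper subsheaf of $Q$; since $Q$ is $\gamma_0$-slope stable by \autoref{lem:Q-stable-HK} and $\mu_{\gamma_0}(Q) = \mu_{\gamma_0}(F_b) = 0$ by \eqref{eq:tautChern}, we get $\mu_{\gamma_0}(E) < \mu_{\gamma_0}(Q) = 0 = \mu_{\gamma_0}(F_b)$, as desired.

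In the remaining case the map $E \to \reg_{\KA}$ is nonzero, so its image is an ideal sheaf $\cI_Z \subseteq \reg_{\KA}$ for some subscheme $Z$ (possibly empty), with $\mu_{\gamma_0}(\cI_Z) \le 0$ and equality iff $Z = \emptyset$, i.e. iff the map is surjective. Let $E' \coloneqq E \cap Q = \ker(E \to \reg_{\KA})$, giving a short exact sequence $0 \to E' \to E \to \cI_Z \to 0$. If $\rk E' > 0$, then $\mu_{\gamma_0}(E') \le 0$ with the same stability bound (if $E' = Q$ then $\mu_{\gamma_0}(E') = 0$, otherwise $E'$ is a proper subsheaf of $Q$ and $\mu_{\gamma_0}(E') < 0$); combining via additivity of $\operatorname{c}_1$ and ranks, $\mu_{\gamma_0}(E)$ is a weighted average of $\mu_{\gamma_0}(E')$ and $\mu_{\gamma_0}(\cI_Z)$, both $\le 0$, hence $\mu_{\gamma_0}(E) \le 0 = \mu_{\gamma_0}(F_b)$; and to get the strict inequality I would argue that equality would force $E' = Q$ and $Z = \emptyset$, so $E \to \reg_{\KA}$ is surjective with kernel $Q$, which would split the extension sequence (since $\Ext^1$-nontriviality of $b$ forbids a splitting), contradicting $b \ne 0$. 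The case $\rk E' = 0$ means $E'$ is torsion; but $E \subseteq F_b$ is torsion-free, so $E' = 0$ and $E \cong \cI_Z \hookrightarrow \reg_{\KA}$ has rank $1 < \rk F_b = n+1$, with $\mu_{\gamma_0}(E) = \mu_{\gamma_0}(\cI_Z) \le 0$, and equality would again force $Z = \emptyset$, i.e. $E = \reg_{\KA}$ would be a sub-line-bundle of $F_b$ mapping isomorphically to the quotient $\reg_{\KA}$, splitting the sequence—again a contradiction.

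The main obstacle I anticipate is handling the borderline equality cases cleanly: one must rule out that a destabilizing (or merely slope-equal) subsheaf arises from a splitting or partial splitting of the defining extension. The key input making this work is that $b$ is a \emph{non-trivial} extension class together with $\Hom(Q, \reg_{\KA}) = 0$ and the simplicity of $Q$ and $\reg_{\KA}$ (all established in \autoref{lem:hextG}), which together prevent any subsheaf of $F_b$ of rank $n+1$ strictly between $Q$ and $F_b$ from giving a section of $F_b \to \reg_{\KA}$. I would also need the standard fact that $\mu_{\gamma_0}$ of an extension is the weighted average of the slopes of sub and quotient—immediate from additivity of first Chern classes and ranks—and the observation that $\gamma_0 = (H_K)^{2n-1}$ pairs non-negatively with effective divisors, so that $\mu_{\gamma_0}(\cI_Z) \le 0$ with equality iff $Z$ has codimension $\ge 2$ support contributing nothing to $\operatorname{c}_1$, i.e. effectively iff $\cI_Z \to \reg_{\KA}$ is an isomorphism in codimension one. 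Since all subsheaves here are reflexive or can be replaced by their saturations without decreasing slope, this suffices.
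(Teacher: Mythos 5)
Your proof follows essentially the same route as the paper's: both analyze the kernel and image of the composition $E \hookrightarrow F_b \twoheadrightarrow \reg_{\KA}$, use the $\gamma_0$-stability of $Q$ from \autoref{lem:Q-stable-HK} together with the weighted-average formula for slopes, and then dispose of the borderline cases, the only difference being that the paper excludes a subsheaf isomorphic to $\reg_{\KA}$ via $\Ho^0(F_b)=0$ (\autoref{cor:hF}) whereas you use the non-splitness of the extension class $b$, which amounts to the same thing. One small slip: in your sub-case where $E'=Q$ and $E\to\reg_{\KA}$ is surjective, the conclusion is that $E=F_b$ (contradicting that $E$ is a \emph{proper} subsheaf), not that the extension splits — but the contradiction stands either way.
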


\begin{proof}
	Consider the following composition
	\begin{equation*}
		\begin{tikzcd}
			E \ar[r,hook] & F_b \ar[r,two heads] & \cO_{\KA}
		\end{tikzcd}
	\end{equation*}
	and denote its kernel and image as $E_1$ and $E_2$ respectively. Then we obtain the commutative diagram
	\begin{equation*}
		\begin{tikzcd}
			0 \ar[r] & Q \ar[r] & F_b \ar[r] & \cO_{\KA} \ar[r] & 0 \\
			0 \ar[r] & E_1 \ar[r] \ar[u,hook'] & E \ar[r] \ar[u,hook'] & E_2 \ar[r] \ar[u,hook'] & 0.
		\end{tikzcd}
	\end{equation*}
	By \autoref{lem:Q-stable-HK}, we know $Q$ is $H_K$-stable hence $\gamma_0$-stable. It is clear that $\cO_{\KA}$ is also $\gamma_0$-stable. Hence we have
	\begin{align*}
		\mu_{\gamma_0}(E_1) &< \mu_{\gamma_0}(Q) = 0 &\text{ unless } E_1 &= 0 \text{ or } Q, \\
		\mu_{\gamma_0}(E_2) &< \mu_{\gamma_0}(\cO_{\KA}) = 0 &\text{ unless } E_2 &= 0 \text{ or } \cO_{\KA}.
	\end{align*}
	It follows that $$ \mu_{\gamma_0}(E) < 0 \qquad \text{unless} \qquad E = 0, \ Q, \ \cO_{\KA}, \text{ or } F_b. $$
	Since $E$ is a proper subsheaf of $F_b$ and $\mathrm{H}^0(F_b) = 0$ by \autoref{cor:hF}, it follows that the only possibility for $\mu_{\gamma_0}(E)<0$ to fail is $E=Q$.
\end{proof}

\begin{corollary}\label{cor:Fbss}
 For every $b\in \PP^1$, the sheaf $F_b$ is properly $\gamma_0$-semistable.     
\end{corollary}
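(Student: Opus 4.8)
The plan is to deduce this essentially for free from \autoref{lem:exception_Q}, which has already identified $Q$ as the unique proper subsheaf of $F_b$ that need not strictly lower the $\gamma_0$-slope; the corollary is then pure bookkeeping, with the genuine geometric input residing in \autoref{lem:Q-stable-HK}.

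First I would record that $\mu_{\gamma_0}(F_b)=0$. This is immediate from the defining sequence $0\to Q\to F_b\to \reg_{\KA}\to 0$: rank and first Chern class are additive in short exact sequences, and $\mu_{\gamma_0}(Q)=0$ by \autoref{lem:Q-stable-HK} while $\mu_{\gamma_0}(\reg_{\KA})=0$ since $\operatorname{c}_1(\reg_{\KA})=0$. Hence $\operatorname{c}_1(F_b)\cdot\gamma_0=0$ and $\mu_{\gamma_0}(F_b)=0$. Next, for semistability, let $E\subset F_b$ be any subsheaf with $0<\rk(E)<\rk(F_b)=n+1$. By \autoref{lem:exception_Q}, either $\mu_{\gamma_0}(E)<0=\mu_{\gamma_0}(F_b)$, or else $E=Q$, in which case $\mu_{\gamma_0}(E)=\mu_{\gamma_0}(Q)=0=\mu_{\gamma_0}(F_b)$. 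In both cases $\mu_{\gamma_0}(E)\le\mu_{\gamma_0}(F_b)$, so $F_b$ is $\gamma_0$-semistable. Finally, to see that $F_b$ is \emph{properly} semistable, i.e.\ not $\gamma_0$-stable, observe that $Q\hookrightarrow F_b$ is a subsheaf with $0<\rk(Q)=n<n+1=\rk(F_b)$ and $\mu_{\gamma_0}(Q)=0=\mu_{\gamma_0}(F_b)$, violating the strict inequality demanded by stability.

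There is no real obstacle here, since \autoref{lem:Q-stable-HK} and \autoref{lem:exception_Q} carry all the weight. The only point worth a second's attention is that $Q$ genuinely qualifies as a test object for stability, i.e.\ that its rank $n$ lies strictly between $0$ and $\rk(F_b)=n+1$; this holds for every $n\ge 2$ (in fact for every $n\ge 1$). One could instead witness non-stability via the surjection $F_b\twoheadrightarrow\reg_{\KA}$ together with $\mu_{\gamma_0}(\reg_{\KA})=\mu_{\gamma_0}(F_b)$, but the subsheaf $Q$ is the cleanest certificate.
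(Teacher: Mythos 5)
Your proof is correct and follows essentially the same route as the paper: invoke \autoref{lem:exception_Q} to see that $Q$ is the only subsheaf that can fail to have strictly negative $\gamma_0$-slope, and then note $\mu_{\gamma_0}(Q)=0=\mu_{\gamma_0}(F_b)$ so that $Q$ witnesses proper semistability. The only cosmetic point is that the vanishing $\mu_{\gamma_0}(Q)=0$ is really a Chern-class computation (already used inside the proofs of \autoref{lem:Q-stable-HK} and \autoref{lem:exception_Q}) rather than part of the statement of \autoref{lem:Q-stable-HK}, but this does not affect the argument.
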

 
\begin{proof}
 By \autoref{lem:exception_Q}, the only potentially destabilizing subsheaf of $F_b$ is $Q$. As $\mu_{\gamma_0}(F_b)=0$ and $\operatorname{c}_1(Q)=\operatorname{c}_1(F)$, we have $\mu_{\gamma_0}(Q)=\mu_{\gamma_0}(F_b)$.   
\end{proof}

For the following discussion, we need to generalize the Grothendieck's lemma \cite[Theorem 2.29]{greb_movable_2016} to families. Since it might be of some use in other situations, we first state it for an arbitrary smooth projective variety $X$, then apply it in our particular situation in \autoref{lem:finite_S_set}.

\begin{lemma}\label{lem:Grothendieck-general}
    Let $X$ be a smooth projective variety, $\beta$ a big curve class on $X$, and $B$ a quasi-projective scheme. Assume $\cF$ is a vector bundle on $X \times B$. We write $F_b \coloneqq \cF|_{X \times \{b\}}$. Then for every $c\in \RR$
    $$ S \coloneqq \{ \operatorname{c_1}(E) \mid E \subseteq F_b \text{ for some } b \in B \text{ such that } \mu_\beta(E) \geq c \} $$
    is a finite set.
\end{lemma}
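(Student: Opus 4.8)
The statement is a family version of Grothendieck's boundedness lemma \cite[Theorem 2.29]{greb_movable_2016}, so the plan is to reduce it to the absolute case by exploiting that $B$ is quasi-projective and that big curve classes behave well under specialization. First I would observe that, since $\beta$ is big, after scaling we may write $\beta = \gamma + \epsilon\alpha$ where $\gamma$ is an ample complete intersection curve class and $\alpha$ is a fixed effective (or pseudoeffective) curve class, hence for any subsheaf $E \subseteq F_b$ the slope $\mu_\beta(E)$ is bounded below by $\mu_\gamma(E)$ minus a correction term controlled by $\operatorname{c}_1(E)\cdot\alpha$; this lets me replace $\beta$ by an honest ample class at the cost of a modified (but still real) bound $c'$, so without loss of generality $\beta = H^{n-1}$ for some very ample $H$ on $X$.

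\textbf{Key steps.} The core of the argument is then the following. Fix a very ample $H$ on $X$ and choose a compactification (or just use that $B$ is of finite type) so that the Hilbert polynomials of the fibres $F_b$ with respect to $H$ are constant in $b$ — this holds because $\cF$ is a vector bundle on $X \times B$, hence flat over $B$, so the numerical invariants $\operatorname{ch}(F_b)$ are locally constant on $B$, and on each connected component they are literally constant. Next, for a subsheaf $E \subseteq F_b$ with $\mu_\beta(E) \geq c$, I would bound the rank of $E$ (it lies between $1$ and $\operatorname{rk}(F_b)$) and bound its degree $\operatorname{c}_1(E)\cdot H^{n-1}$ from below by the slope hypothesis and from above by the classical argument: the maximal destabilizing subsheaf of $F_b$ has bounded slope (here one uses Grothendieck's lemma applied to the finitely many Hilbert polynomials occurring among the $F_b$, or a relative version of the Harder--Narasimhan filtration which exists since $B$ is Noetherian), so $\mu_{\max}(F_b)$ is bounded above uniformly in $b$. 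Combining these, $\operatorname{c}_1(E)\cdot H^{n-1}$ lies in a bounded interval and $\operatorname{rk}(E)$ takes finitely many values. Finally, to conclude that $\operatorname{c}_1(E)$ itself lies in a finite set — not just its intersection number with $H^{n-1}$ — I would invoke the Hodge index theorem on $X$ (or more precisely, the fact that the set of classes $D \in \NS(X)$ with $D\cdot H^{n-1}$ bounded and $D$ \emph{effective} or appearing as $\operatorname{c}_1$ of a subsheaf of a fixed bundle is finite), exactly as in the proof of \cite[Theorem 2.29]{greb_movable_2016}; the effectivity of $\operatorname{c}_1(F_b/E)$ together with the boundedness of $\operatorname{c}_1(E)\cdot H^{n-1}$ pins $\operatorname{c}_1(E)$ into a finite set via the compactness of the relevant slice of the effective cone.

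\textbf{Main obstacle.} The step I expect to require the most care is the uniform bound on $\mu_{\max}(F_b)$ over all $b \in B$: the absolute Grothendieck lemma gives, for each individual $b$, a bound depending a priori on $F_b$, and one must check this bound can be taken uniform. This follows because the relevant input to Grothendieck's lemma is only the Hilbert polynomial of $F_b$ (equivalently its Chern character), which is constant on connected components of $B$, and $B$ being of finite type has finitely many connected components; alternatively one can cite the existence of a relative Harder--Narasimhan filtration over a Noetherian base after a stratification. Once this uniformity is in hand, the rest is a routine repackaging of the classical proof, with the only mild subtlety being the translation between the big curve class $\beta$ and an ample one, handled by \autoref{cone_int} and \autoref{cor:big} as already developed in the text.
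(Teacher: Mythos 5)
Your overall strategy --- rerunning the proof of the absolute Grothendieck lemma uniformly in $b$ --- could probably be made to work, but as written it has a genuine gap in its very first step, and it is far more labour than is needed. The problematic step is the reduction from the big curve class $\beta$ to an ample complete-intersection class: you write $\beta=\gamma+\epsilon\alpha$ and claim that $\mu_\beta(E)\geq c$ forces $\mu_\gamma(E)\geq c'$ ``at the cost of a correction term controlled by $\operatorname{c}_1(E)\cdot\alpha$''. But $\operatorname{c}_1(E)\cdot\alpha$ ranges over all subsheaves $E$ of all the $F_b$, and bounding it is a statement of exactly the same nature as the one you are trying to prove; to control it you would need, for instance, a uniform bound on $\mu_{\max,\alpha}(F_b)$ for the merely movable class $\alpha$, which again requires a family version of Grothendieck--Harder--Narasimhan theory for non-ample classes. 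Nothing forces this detour: \cite[Theorem 2.29]{greb_movable_2016} is already stated for big curve classes, so the reduction to an ample class is both unjustified as written and unnecessary. A second, more minor, imprecision: the bound furnished by Grothendieck's lemma is not a function of the Hilbert polynomial of $F_b$ alone (the lemma concerns subsheaves of one fixed sheaf), so ``constant Chern character on connected components'' does not by itself give uniformity in $b$; what does give it is boundedness of the family $\{F_b\}_{b\in B}$, which you only gesture at via the relative Harder--Narasimhan filtration.

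The paper's proof short-circuits all of this. Since $\cF$ is a vector bundle on $X\times B$ with $B$ of finite type, the family $\{F_b^\vee\}_{b\in B}$ is bounded, so by \cite[Lemma 1.7.6]{huybrechts_moduli} there is a single vector bundle $V$ on $X$ with surjections $V\twoheadrightarrow F_b^\vee$ for all $b$; dualizing, every $F_b$ embeds into the fixed bundle $V^\vee$, hence every subsheaf of every $F_b$ is a subsheaf of $V^\vee$. A single application of the absolute lemma \cite[Theorem 2.29]{greb_movable_2016} to $V^\vee$ and the big class $\beta$ then yields the finiteness of $S$ as a subset of a finite set. You should adopt this reduction: it removes both the big-to-ample issue and the uniformity-in-$b$ issue in one stroke.
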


\begin{proof}
    The family of vector bundles $\{ F_b \mid b \in B \}$ is bounded, so is the family $\{ F_b^\vee \mid b \in B \}$. By \cite[Lemma 1.7.6]{huybrechts_moduli} there exists some auxiliary vector bundle $V$ on $X$ with surjective maps $V \twoheadrightarrow F_b^\vee$ for all $b \in B$. It follows that $F_b \hookrightarrow V^\vee$ for all $b \in B$, which implies that every subsheaf of $F_b$ for some $b \in B$ is a subsheaf of $V^\vee$. Since $\beta$ is a big class, the absolute version of the Grothendieck lemma \cite[Theorem 2.29]{greb_movable_2016} implies that the set
	$$ \{ \operatorname{c_1}(E) \mid E \subseteq V^\vee \text{ such that } \mu_{\beta}(E) \geq c \} $$
	is finite, and so is the subset $S$.
\end{proof}

From now on, we fix some $\beta\in \mathrm{Big}(\KA)$. For each $\epsilon>0$, we write 
\[
\gamma_\eps\coloneqq \gamma_0+\epsilon \beta
\]
which is big by \autoref{cor:big}.
For each $c \in \RR$, we further denote
$$ S(\epsilon,c) \coloneqq \{ \operatorname{c_1}(E) \mid E \subseteq F_b \text{ for some } b \in \PP^1 \text{ such that } \mu_{\gamma_\epsilon}(E) \geq c \}, $$
then we have

\begin{corollary}\label{lem:finite_S_set}
	For any $c \in \RR$ and $\epsilon > 0$, the set $S(\epsilon,c)$ is finite.
\end{corollary}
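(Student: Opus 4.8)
The plan is to deduce \autoref{lem:finite_S_set} directly from the family version of Grothendieck's lemma proved just above, namely \autoref{lem:Grothendieck-general}. The key point is that the hypotheses of that lemma are exactly satisfied in our situation: we have the base scheme $B = \PP^1$, which is quasi-projective (in fact projective), the big curve class $\beta' \coloneqq \gamma_\eps = \gamma_0 + \eps\beta$ on $X = \KA$, which is big by \autoref{cor:big}, and the vector bundle $\cF$ on $\KA \times \PP^1$ provided by the $\PP^1$-family of extensions in \autoref{lem:P1-family}, whose restriction to $\KA \times \{b\}$ is $F_b$ by construction (up to the twist by $\cO_{\PP^1}(1)$, which does not affect the bundle structure on each fibre and hence does not change the $F_b$). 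With these identifications, the set $S(\eps,c)$ defined before the corollary is literally the set $S$ appearing in the statement of \autoref{lem:Grothendieck-general}.

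So the proof is essentially one line: fix $c \in \RR$ and $\eps > 0$, apply \autoref{lem:Grothendieck-general} with $X = \KA$, $B = \PP^1$, the big class $\gamma_\eps$, and the vector bundle $\cF$ from \eqref{eqn:universal-O-Q}; this yields that $S(\eps, c)$ is finite. The only genuinely necessary remark is that $\cF|_{\KA \times \{b\}} \cong F_b$: this holds by \autoref{lem:P1-family}, since restricting \eqref{eqn:universal-O-Q} to the fibre over $b \in \PP^1$ gives an extension of $\cO_{\KA}$ by $Q$ (the twist $\pi_2^*\cO_{\PP^1}(1)$ becoming trivial on a point) whose class is $b$, and we are told $\cF_{[b]} \cong F_b$.

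There is no real obstacle here; the work has all been done in the more general \autoref{lem:Grothendieck-general} and in setting up the universal family. If anything, the only point requiring a word of care is that \autoref{lem:Grothendieck-general} is stated for a fixed big curve class, whereas here we want it for each $\gamma_\eps$ with $\eps > 0$; but since $\gamma_\eps$ is big for every $\eps > 0$ by \autoref{cor:big}, we simply invoke the lemma once for each such $\eps$. I would write the proof as follows.

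\begin{proof}
Fix $c \in \RR$ and $\eps > 0$. By \autoref{cor:big}, the curve class $\gamma_\eps = \gamma_0 + \eps\beta$ is big on $\KA$. By \autoref{lem:P1-family}, there is a vector bundle $\cF$ on $\KA \times \PP^1$, namely the one in the extension \eqref{eqn:universal-O-Q}, whose restriction to $\KA \times \{b\}$ is isomorphic to $F_b$ for every $b \in \PP^1$. Applying \autoref{lem:Grothendieck-general} with $X = \KA$, the big curve class $\gamma_\eps$, $B = \PP^1$, and the bundle $\cF$, we conclude that
\[
S(\eps, c) = \{ \operatorname{c_1}(E) \mid E \subseteq F_b \text{ for some } b \in \PP^1 \text{ such that } \mu_{\gamma_\eps}(E) \geq c \}
\]
is a finite set.
\end{proof}
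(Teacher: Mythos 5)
Your proof is correct and is essentially identical to the paper's, which simply notes that the corollary is a special case of \autoref{lem:Grothendieck-general}; your additional verification that $B=\PP^1$, $\gamma_\eps$ is big, and $\cF|_{\KA\times\{b\}}\cong F_b$ just makes explicit what the paper leaves implicit.
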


\begin{proof}
    It is a special case of \autoref{lem:Grothendieck-general}.
\end{proof}

Note that $\operatorname{c}_1(F_b)$ is independent of $b$. Hence, the same holds for $\mu_{\gamma_\epsilon}(F_b)$ for any fixed $\epsilon>0$, and we can denote
\begin{align*} S(\epsilon) \coloneqq &S\bigl(\epsilon,\mu_{\gamma_\epsilon}(F_b)\bigr)\\= &\{ \operatorname{c}_1(E) \mid E \subseteq F_b \text{ for some } b \in \PP^1 \text{ such that } \mu_{\gamma_\epsilon}(E) \geq \mu_{\gamma_\epsilon}(F_b) \}\,.\end{align*}

\begin{lemma}\label{lem:each_destabilizing_class}
	Assume that $\operatorname{c_1}(E) \in S(\epsilon)$, then there exists some $\epsilon(E)$ with $$0 < \epsilon(E) \leq \epsilon,$$ such that 
	\begin{align*}
		&\operatorname{c_1}(E) \notin S(\theta) \qquad \text{for each } \theta \in (0, \epsilon(E)), \text{ and} \\
		&\operatorname{c_1}(E) \in S(\theta) \qquad \text{for each } \theta \in [\epsilon(E), \infty).
	\end{align*}
	As a consequence, for any $0 < \epsilon_1 < \epsilon_0$, we have $S(\epsilon_1) \subseteq S(\epsilon_0)$.
\end{lemma}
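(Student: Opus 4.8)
The plan is to reduce everything to the elementary geometry of affine-linear functions of $\theta$. Call a pair $(\xi,r)$ — with $\xi$ a numerical divisor class on $\KA$ and $r\ge 1$ an integer — \emph{admissible} if there are $b\in\PP^1$ and a nonzero proper subsheaf $E'\subsetneq F_b$ with $\operatorname{c}_1(E')=\xi$ and $\rk E'=r$; crucially this does not involve $\theta$, and for fixed $\xi$ only finitely many $r$ can occur (namely $r\le\rk F_b=n+1$). Because $\gamma_\theta=\gamma_0+\theta\beta$ is affine-linear in $\theta$ while $\operatorname{c}_1(F_b)$ and $\rk F_b$ are independent of $b$, for every admissible $(\xi,r)$ and every witness $E'\subseteq F_b$ the difference $\mu_{\gamma_\theta}(E')-\mu_{\gamma_\theta}(F_b)$ is one and the same affine-linear function of $\theta$, say $\theta\mapsto a_{\xi,r}+\theta\, b_{\xi,r}$, and its constant term is $a_{\xi,r}=\mu_{\gamma_0}(E')-\mu_{\gamma_0}(F_b)=\mu_{\gamma_0}(E')$, using $\mu_{\gamma_0}(F_b)=0$ from \autoref{cor:Fbss}. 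Hence the defining condition becomes: $\operatorname{c}_1(E)\in S(\theta)$ if and only if $a_{\xi,r}+\theta\, b_{\xi,r}\ge 0$ for some admissible $(\xi,r)$ with $\xi=\operatorname{c}_1(E)$.

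The heart of the argument is to show that, under the hypothesis $\operatorname{c}_1(E)\in S(\epsilon)$, all the relevant constant terms are \emph{strictly} negative. By \autoref{lem:exception_Q} every nonzero proper subsheaf $E'\subsetneq F_b$ has $\mu_{\gamma_0}(E')\le 0$, with equality only when $E'=Q$. Since $\operatorname{c}_1(F_b)=\operatorname{c}_1(Q)=-\delta$ (same numerical invariants as $\reg_A^{(n)}$, see \eqref{eq:tautChern}) and $\mu_{\gamma_0}(F_b)=0$, we also get $\delta\cdot\gamma_0=0$; and $\delta\cdot\beta>0$ because $\beta$ is big and $\delta$ is a nonzero effective divisor class. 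Now if $\operatorname{c}_1(E)=\operatorname{c}_1(Q)$, then any admissible witness $E'$ would satisfy $\mu_{\gamma_0}(E')=\tfrac{-\delta\cdot\gamma_0}{\rk E'}=0$, forcing $E'=Q$ by \autoref{lem:exception_Q}; but $\mu_{\gamma_\theta}(Q)=\tfrac{-\theta\,\delta\cdot\beta}{n}<\tfrac{-\theta\,\delta\cdot\beta}{n+1}=\mu_{\gamma_\theta}(F_b)$ for every $\theta>0$, so $\operatorname{c}_1(Q)\notin S(\theta)$ for any $\theta>0$, contradicting the hypothesis. Therefore $\operatorname{c}_1(E)\ne\operatorname{c}_1(Q)$, no admissible witness equals $Q$, and \autoref{lem:exception_Q} gives $a_{\xi,r}=\mu_{\gamma_0}(E')<0$ for every admissible $(\xi,r)$ with $\xi=\operatorname{c}_1(E)$.

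The rest is bookkeeping. Fix $\xi=\operatorname{c}_1(E)$. For each admissible $r$, since $a_{\xi,r}<0$, the inequality $a_{\xi,r}+\theta\, b_{\xi,r}\ge 0$ has no solution with $\theta>0$ if $b_{\xi,r}\le 0$, and has solution set $[\theta_r,\infty)$ with $\theta_r:=-a_{\xi,r}/b_{\xi,r}>0$ if $b_{\xi,r}>0$. The hypothesis $\operatorname{c}_1(E)\in S(\epsilon)$ forces at least one admissible $r$ to have $b_{\xi,r}>0$ and $\theta_r\le\epsilon$. Setting $\epsilon(E):=\min\{\theta_r : (\xi,r)\ \text{admissible},\ b_{\xi,r}>0\}$ — a minimum over a nonempty finite set — we obtain $0<\epsilon(E)\le\epsilon$, and by the criterion of the first paragraph $\operatorname{c}_1(E)\in S(\theta)\iff\theta\ge\epsilon(E)$, which is precisely the two displayed statements. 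The final consequence follows at once: for $0<\epsilon_1<\epsilon_0$ and $\operatorname{c}_1(E)\in S(\epsilon_1)$, the above (applied with $\epsilon=\epsilon_1$) gives $\epsilon(E)\le\epsilon_1<\epsilon_0$, so $\operatorname{c}_1(E)\in S(\epsilon_0)$.

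The step I expect to be most delicate is not a hard estimate but the middle one: guaranteeing that the constant term $a_{\xi,r}$ is strictly negative. It is exactly the class $\operatorname{c}_1(Q)=\operatorname{c}_1(F_b)$ — the one responsible for $F_b$ being only properly $\gamma_0$-semistable in \autoref{cor:Fbss} — that sits on the boundary and threatens the monotonicity, so the proof genuinely needs \autoref{lem:exception_Q} together with the two intersection facts $\delta\cdot\gamma_0=0$ and $\delta\cdot\beta>0$ to check that this borderline class leaves $S(\theta)$ as soon as $\theta>0$. One should also verify the two independence claims of the first paragraph — that $a_{\xi,r}$, $b_{\xi,r}$ and the set of admissible ranks do not depend on $\theta$ — which hold simply because $\operatorname{c}_1(F_b)$ and $\rk F_b$ are constant in $b$.
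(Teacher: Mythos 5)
Your proof is correct and follows the same core strategy as the paper's: exploit the linearity of $\mu_{\gamma_\theta}$ in $\theta$, use \autoref{lem:exception_Q} together with $\delta\cdot\gamma_0=0$ and $\delta\cdot\beta>0$ to exclude the borderline witness $Q$ and force strict negativity of the slope difference at $\theta=0$, and then solve the resulting affine-linear inequality for the threshold $\epsilon(E)$. The one place you go beyond the paper is a genuine (minor) improvement: the paper computes $\epsilon(E)$ from a single fixed witness, even though membership of $\operatorname{c}_1(E)$ in $S(\theta)$ quantifies over all subsheaves with that first Chern class, whose thresholds depend on their ranks; your minimum over the finitely many admissible ranks closes that small gap and makes the claim $\operatorname{c}_1(E)\notin S(\theta)$ for $\theta<\epsilon(E)$ watertight.
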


\begin{proof}
	By assumption, we have $$\mu_{\gamma_\epsilon}(E) \ge \mu_{\gamma_\epsilon}(F_b).$$ As $\delta \cdot \beta > 0$ and by the description of the relevant first Chern classes \eqref{eq:tautChern}, we have $$ \mu_{\gamma_\epsilon}(Q) < \mu_{\gamma_\epsilon}(F_b)\,. $$ Hence, we conclude that $E \not\cong Q$. It follows that $$\mu_{\gamma_0}(E) < \mu_{\gamma_0}(F_b)$$ by \autoref{lem:exception_Q}. By definition we have
	$$ \mu_{\gamma_\epsilon} (\_) = \mu_{\gamma_0} (\_) + \epsilon \mu_{\beta} (\_) $$
	which is linear in $\epsilon$. We conclude that
	$$ \mu_{\beta}(E) > \mu_{\beta}(F_b), $$
	and that
	$$ \epsilon(E) = \frac{\mu_{\gamma_0}(F_b) - \mu_{\gamma_0}(E)}{\mu_{\beta}(E) - \mu_{\beta}(F_b)} > 0 $$
	meet the requirement. The second statement follows immediately.
\end{proof}

Now we can prove the following stability result

\begin{proposition}\label{stable_allb}
	There exists some $\epsilon' > 0$, such that for each $\epsilon \in (0, \epsilon')$ and each $b \in \PP^1$, the fiber $F_b$ of the universal extension \eqref{eqn:universal-O-Q} is stable with respect to $\gamma_\epsilon$.
\end{proposition}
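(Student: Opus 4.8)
The plan is to bootstrap \autoref{cor:Fbss}: we already know each $F_b$ is properly $\gamma_0$-semistable with $Q$ the unique destabilizer, so it suffices to show that perturbing $\gamma_0$ slightly in a big direction $\beta$ kills all destabilizers at once for the whole $\PP^1$-family, and that the size $\epsilon'$ of the admissible perturbation can be chosen uniformly in $b$. First I would fix once and for all some $\epsilon_0>0$; by \autoref{lem:finite_S_set} the set $S(\epsilon_0)$ is finite, and by the monotonicity in \autoref{lem:each_destabilizing_class} we have $S(\epsilon)\subseteq S(\epsilon_0)$ for every $\epsilon\in(0,\epsilon_0)$. Now suppose that for some $\epsilon\in(0,\epsilon_0)$ and some $b$ there is a subsheaf $E\subsetneq F_b$ with $0<\rk(E)<\rk(F_b)$ and $\mu_{\gamma_\epsilon}(E)\ge\mu_{\gamma_\epsilon}(F_b)$. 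The subbundle $Q$ is ruled out as a candidate for $E$: since $\rk(Q)<\rk(F_b)$, $\operatorname{c}_1(Q)=\operatorname{c}_1(F_b)$, $\delta\cdot\beta>0$, and $\mu_{\gamma_0}(Q)=\mu_{\gamma_0}(F_b)=0$, one gets $\mu_{\gamma_\epsilon}(Q)<\mu_{\gamma_\epsilon}(F_b)$ for all $\epsilon>0$ (this is precisely the computation already used in the proof of \autoref{lem:each_destabilizing_class}). Hence $E\neq Q$, and \autoref{lem:exception_Q} gives $\mu_{\gamma_0}(E)<\mu_{\gamma_0}(F_b)=0$.

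The key step is then the linear-in-$\epsilon$ threshold estimate. Writing $\mu_{\gamma_\epsilon}=\mu_{\gamma_0}+\epsilon\mu_\beta$ and combining $\mu_{\gamma_0}(E)<0=\mu_{\gamma_0}(F_b)$ with $\mu_{\gamma_\epsilon}(E)\ge\mu_{\gamma_\epsilon}(F_b)$ forces $\mu_\beta(E)>\mu_\beta(F_b)$ and
\[
\epsilon\ \ge\ \frac{\mu_{\gamma_0}(F_b)-\mu_{\gamma_0}(E)}{\mu_\beta(E)-\mu_\beta(F_b)}\ =:\ \epsilon(E)\ >\ 0,
\]
just as in \autoref{lem:each_destabilizing_class}. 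The point I would emphasize is that $\epsilon(E)$ depends only on $\rk(E)$ and $\operatorname{c}_1(E)$, because $\mu_{\gamma_0}(F_b)=0$ and $\mu_\beta(F_b)$ are independent of $b$. Since $\operatorname{c}_1(E)\in S(\epsilon)\subseteq S(\epsilon_0)$ lies in a finite set and $\rk(E)\in\{1,\dots,\rk(F_b)-1\}$ also ranges over a finite set, the numbers $\epsilon(E)$ arising this way form a finite set of strictly positive reals; I would then let $\epsilon'$ be its minimum (and set $\epsilon'=\epsilon_0$ if the set happens to be empty).

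With $\epsilon'$ so defined, the conclusion is immediate: for every $\epsilon\in(0,\epsilon')$ and every $b\in\PP^1$, a subsheaf $E\subsetneq F_b$ of strictly smaller rank with $\mu_{\gamma_\epsilon}(E)\ge\mu_{\gamma_\epsilon}(F_b)$ would satisfy $E\neq Q$ and hence $\epsilon\ge\epsilon(E)\ge\epsilon'$, a contradiction; so $F_b$ is $\gamma_\epsilon$-stable. The only genuinely non-formal ingredient is the finiteness of $S(\epsilon_0)$, i.e.\ the uniform control of the possible destabilizing first Chern classes across the entire $\PP^1$-family, and this has already been secured by the family version of Grothendieck's lemma in \autoref{lem:Grothendieck-general} via \autoref{lem:finite_S_set}; everything else is the bookkeeping prepared in \autoref{lem:exception_Q}, \autoref{cor:Fbss}, and \autoref{lem:each_destabilizing_class}, so I do not anticipate any real obstacle beyond assembling these pieces.
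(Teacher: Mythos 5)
Your proof is correct and follows essentially the same route as the paper: both rest on the finiteness of $S(\epsilon_0)$ from the family Grothendieck lemma (\autoref{lem:finite_S_set}), the exclusion of $Q$ as a $\gamma_\epsilon$-destabilizer, and the positive threshold $\epsilon(E)$ of \autoref{lem:each_destabilizing_class}. The only difference is organizational: the paper eliminates the finitely many destabilizing classes by an iterated strict descent $S(\epsilon_0)\supsetneq S(\epsilon_1)\supsetneq\cdots\supsetneq\varnothing$, whereas you take the minimum of the finitely many thresholds in one step, making explicit the (correct and useful) observation that $\epsilon(E)$ depends only on $\rk(E)$ and $\operatorname{c}_1(E)$.
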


\begin{proof}
	We start with an arbitrary $\epsilon_0 > 0$ such that $\gamma_{\epsilon_0}$ is a big curve class. 
	
	If $S(\epsilon_0) \neq \varnothing $, then there exists some $E \subseteq F_b$ such that $\operatorname{c_1}(E) \in S(\epsilon_0)$. Let $\epsilon_1 = \epsilon(E)/2$ where $\epsilon(E)$ is defined in \autoref{lem:each_destabilizing_class}. Since $0 < \epsilon_1 < \epsilon(E) \leq \epsilon_0$, we conclude that $\operatorname{c_1}(E) \notin S(\epsilon_1)$, hence $S(\epsilon_1) \subsetneq S(\epsilon_0)$ by \autoref{lem:each_destabilizing_class} again. 
	
	If $S(\epsilon_1) \neq \varnothing$, we can similarly find $0 < \epsilon_2 < \epsilon_1$ such that $S(\epsilon_2) \subsetneq S(\epsilon_1)$. By \autoref{lem:finite_S_set}, after finitely many similar steps, we can find some $\epsilon_k > 0$ such that $S(\epsilon_k) = \varnothing$. Then we have $S(\epsilon) = \varnothing$ for each $\epsilon \in (0, \epsilon_k)$; in other words, $F_b$ is stable with respect to $\gamma_\epsilon$. Therefore we can set $\epsilon' = \epsilon_k$.
\end{proof}

\subsection{Stability of extension bundles with respect to ample classes}
After having established stability of the $F_b$ with respect to certain curve classes in \autoref{stable_allb},
we now want to show that all $F_b$ for $b\in \PP^1$ are actually slope stable with respect to some ample class. For this we need some preparations:

\begin{lemma}[{Compare \cite[Lemma 4.4]{stapleton_taut_2016}}]\label{lem:gammaadd}
	Let $F$ be a sheaf and $\gamma_1,\gamma_2\in N_1$.
	\begin{enumerate}
		\item
		Let $E\subset F$ be a subsheaf. Then, for every $\gamma=s\gamma_1+t\gamma_2$ with $s\ge 0$ and $t>0$ (hence, in particular, for every $\gamma$ on the line segment connecting $\gamma_1$ and $\gamma_2$), we have:
		\[\mu_{\gamma_1}(E)\le \mu_{\gamma_1}(F)\quad \text{and}\quad \mu_{\gamma_2}(E)< \mu_{\gamma_2}(F)\quad \Longrightarrow \quad \mu_{\gamma}(E)< \mu_{\gamma}(F)\,.\]
		\item 
		Let $F$ be $\gamma_1$-semistable and $\gamma_2$-stable. Then $F$ is also $\gamma$-stable for every $\gamma=s\gamma_1+t\gamma_2$ with $s\ge 0$ and $t>0$ (hence, in particular, for every $\gamma$ on the line segment connecting $\gamma_1$ and $\gamma_2$).
	\end{enumerate}
\end{lemma}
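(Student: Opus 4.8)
The plan is to reduce everything to the linearity of the slope in the curve class: for a fixed coherent sheaf $F$ of positive rank, additivity of the intersection pairing gives
\[
\mu_{s\gamma_1+t\gamma_2}(F)=\frac{\operatorname{c_1}(F)\cdot(s\gamma_1+t\gamma_2)}{\rk(F)}=s\,\mu_{\gamma_1}(F)+t\,\mu_{\gamma_2}(F),
\]
and the same identity holds verbatim with $F$ replaced by any subsheaf $E$ of positive rank.

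For part (1), I would put $\delta_i\coloneqq \mu_{\gamma_i}(F)-\mu_{\gamma_i}(E)$ for $i=1,2$. The hypotheses say $\delta_1\ge 0$ and $\delta_2>0$, while the displayed linearity gives $\mu_\gamma(F)-\mu_\gamma(E)=s\delta_1+t\delta_2$; since $s\ge 0$ and $t>0$ this is strictly positive, which is exactly $\mu_\gamma(E)<\mu_\gamma(F)$. The parenthetical remark about the segment joining $\gamma_1$ and $\gamma_2$ is the case $s+t=1$ with $s,t\ge 0$, the endpoint $t=0$ (that is, $\gamma=\gamma_1$) being excluded precisely because there the inequality need not be strict. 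Part (2) is then immediate: given any subsheaf $E\subset F$ with $0<\rk(E)<\rk(F)$, the $\gamma_1$-semistability of $F$ gives $\mu_{\gamma_1}(E)\le \mu_{\gamma_1}(F)$ and the $\gamma_2$-stability gives $\mu_{\gamma_2}(E)<\mu_{\gamma_2}(F)$, so part (1) applies and yields $\mu_\gamma(E)<\mu_\gamma(F)$; as $E$ was arbitrary, $F$ is $\gamma$-stable.

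The argument is purely formal and I do not expect any genuine obstacle; the only things to watch are keeping strict and non-strict inequalities apart (hence the hypothesis $t>0$ rather than $t\ge 0$) and the fact that the statement concerns slopes of subsheaves alone, so one need not here verify that the intermediate class $\gamma$ is movable or big --- that is done separately in the applications, e.g.\ in \autoref{stable_allb} and the subsequent passage to ample classes.
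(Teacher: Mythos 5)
Your proof is correct and is exactly the paper's argument: the paper also deduces both parts from the linearity $\mu_{s\gamma_1+t\gamma_2}(\_)=s\mu_{\gamma_1}(\_)+t\mu_{\gamma_2}(\_)$, merely stating this in one line where you spell out the sign bookkeeping. Nothing to add.
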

\begin{proof}
	This follows from the fact that $\mu$ is linear in $\gamma$, i.e.
 \begin{equation*}
     \mu_{s\gamma_1+t\gamma_2}(\_)=s\mu_{\gamma_1}(\_) +t\mu_{\gamma_2}(\_).\qedhere
 \end{equation*}
\end{proof}

\begin{proposition}\label{open_stable}
	Let $\alpha\in N_1(\KA)_{\RR}$ be a big class such that $F_b$ is $\alpha$-stable for every $b\in \PP^1$. Then there exists an open neighbourhood $\alpha\in U\subset  N_1(\KA)_{\RR}$ such that for $\gamma\in U$ the sheaves $F_b$ are still $\gamma$-stable for every $b\in \PP^1$.    
\end{proposition}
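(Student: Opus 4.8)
The strategy is to exploit the finiteness provided by the family version of Grothendieck's lemma, exactly as in the proof of \autoref{stable_allb}, but now phrased over an open cone of curve classes rather than along a ray. First I would fix once and for all an auxiliary big class; since $\alpha$ is big and bigness is an open condition, I may choose a small open neighbourhood $\alpha\in U_0\subset N_1(\KA)_{\RR}$ consisting entirely of big classes, and moreover I may assume $U_0$ is contained in a compact set, so that $\{\mu_\gamma(\_) : \gamma\in \overline{U_0}\}$ varies in a controlled (bounded) way. The key point is that destabilizing first Chern classes can only come from a \emph{finite} set: applying \autoref{lem:Grothendieck-general} with $X=\KA$, the big class $\beta=\alpha$, the base $B=\PP^1$, and the vector bundle $\cF$ of \eqref{eqn:universal-O-Q}, together with some fixed $c\in\RR$ chosen below the infimum over $\gamma\in\overline{U_0}$ and $b\in\PP^1$ of $\mu_\gamma(F_b)$, shows that
\[
S \coloneqq \{ \operatorname{c_1}(E) \mid E\subseteq F_b \text{ for some }b\in\PP^1,\ \mu_\alpha(E)\ge c \}
\]
is finite; enlarging $c$ so that it also serves as a lower bound for $\mu_\gamma(F_b)$ over all $\gamma\in\overline{U_0}$, we see that for \emph{any} $\gamma\in U_0$, every subsheaf $E\subsetneq F_b$ that could possibly destabilize $F_b$ with respect to $\gamma$ has $\operatorname{c_1}(E)$ lying in this same finite set $S$ (here one uses that $\mu_\gamma(E)\ge\mu_\gamma(F_b)\ge c$ forces $\operatorname{c_1}(E)\in S$, since $\mu_\gamma$ depends on $E$ only through $\operatorname{c_1}(E)$ and $\rk(E)$, and the rank is bounded).

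Next I would pass from first Chern classes to an open condition on $\gamma$. For each class $\eta\in S$ and each admissible rank $r\in\{1,\dots,\rk(F_b)-1\}$, the function
\[
\gamma\ \longmapsto\ \frac{\eta\cdot\gamma}{r}-\mu_\gamma(F_b)=\frac{\eta\cdot\gamma}{r}-\frac{\operatorname{c_1}(F_b)\cdot\gamma}{\rk(F_b)}
\]
is linear in $\gamma$, and by hypothesis it is strictly negative at $\gamma=\alpha$ for every pair $(\eta,r)$ that is actually realized by some subsheaf $E\subseteq F_b$ (this is precisely $\alpha$-stability of all the $F_b$, via \autoref{lem:exception_Q}-type reasoning combined with the fact that the genuinely dangerous subsheaf $Q$ has $\operatorname{c_1}(Q)=\operatorname{c_1}(F_b)$ but $\rk(Q)=\rk(F_b)-1\ne\rk(F_b)$, so it too gives a strict inequality after the ample perturbation — wait, more carefully: $Q$ has the same $\operatorname{c_1}$ as $F_b$ but smaller rank, so $\mu_\alpha(Q)=\tfrac{\rk(F_b)}{\rk(Q)}\mu_\alpha(F_b)$, which is $<\mu_\alpha(F_b)$ since $\mu_\alpha(F_b)<0$ by \eqref{eq:tautChern}; hence $Q$ is also handled). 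Since $S$ is finite and the set of admissible ranks is finite, there are only finitely many such linear functionals; each is strictly negative at $\alpha$, so each is strictly negative on an open neighbourhood of $\alpha$, and the intersection $U$ of these finitely many open half-spaces with $U_0$ is an open neighbourhood of $\alpha$ on which every potentially destabilizing pair $(\operatorname{c_1}(E),\rk(E))$ still yields $\mu_\gamma(E)<\mu_\gamma(F_b)$.

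Finally I would assemble the argument: for $\gamma\in U$ and any $b\in\PP^1$, any proper nonzero subsheaf $E\subsetneq F_b$ with $\mu_\gamma(E)\ge\mu_\gamma(F_b)$ would have $\operatorname{c_1}(E)\in S$ by the Grothendieck-lemma step, hence $(\operatorname{c_1}(E),\rk(E))$ is one of the finitely many pairs cut out above, hence $\mu_\gamma(E)<\mu_\gamma(F_b)$ by construction of $U$ — a contradiction. Therefore $F_b$ is $\gamma$-stable for every $\gamma\in U$ and every $b\in\PP^1$. The main obstacle I anticipate is the bookkeeping in the first step: one must be careful that the \emph{same} finite set $S$ works uniformly for all $\gamma$ in the neighbourhood (not just for $\alpha$), which is why it is essential to choose the constant $c$ as a uniform lower bound over the compact closure $\overline{U_0}$ of slopes $\mu_\gamma(F_b)$, and to invoke \autoref{lem:Grothendieck-general} only once, with the fixed big class $\alpha$ (or $\beta$), rather than separately for each $\gamma$. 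Everything else is the routine linearity-of-slope argument already used in \autoref{lem:gammaadd} and \autoref{stable_allb}.
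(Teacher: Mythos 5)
Your overall architecture is the right one, and your second step (reduce to finitely many linear functionals in $\gamma$, each strictly negative at $\alpha$ by $\alpha$-stability, hence strictly negative on an open neighbourhood) is a clean streamlining of what the paper does via \cite[Theorem~3.3]{greb_movable_2016} applied to a finite subset $I\subset\PP^1$. But there is a genuine gap in your first step, and it is exactly at the point you flag as "the main obstacle". You define
\[
S=\{\operatorname{c}_1(E)\mid E\subseteq F_b \text{ for some } b\in\PP^1,\ \mu_\alpha(E)\ge c\}
\]
by testing slopes against the \emph{single} class $\alpha$, and then claim that any $E\subseteq F_b$ with $\mu_\gamma(E)\ge\mu_\gamma(F_b)\ge c$ for some nearby $\gamma\in U_0$ must have $\operatorname{c}_1(E)\in S$. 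This does not follow: membership in $S$ requires $\mu_\alpha(E)\ge c$, and $\mu_\gamma(E)\ge c$ does not imply it, because
\[
\mu_\gamma(E)-\mu_\alpha(E)=\frac{\operatorname{c}_1(E)\cdot(\gamma-\alpha)}{\rk(E)}
\]
is \emph{not} uniformly small over all subsheaves $E$ of the $F_b$: the classes $\operatorname{c}_1(E)$ range over an unbounded subset of $\NS(\KA)$ (twist a subsheaf down by a large multiple of a divisor), so even for $\gamma$ arbitrarily close to $\alpha$ there can be subsheaves with $\mu_\gamma(E)\ge\mu_\gamma(F_b)$ but $\mu_\alpha(E)$ arbitrarily negative. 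Your compactness remark controls $\mu_\gamma(F)$ for a \emph{fixed} sheaf $F$ as $\gamma$ varies, which is not the uniformity you need. Invoking \autoref{lem:Grothendieck-general} only once at $\alpha$ is therefore not enough.

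The fix is the one the paper uses: choose a basis $e_1,\dots,e_\ell$ of $N_1(\KA)_{\RR}$ and $\delta_i>0$ with $\alpha\pm\delta_i e_i$ big, apply \autoref{lem:Grothendieck-general} at each of these $2\ell$ classes to get finite sets $S_i^\pm$, and then use linearity of $\gamma\mapsto\mu_\gamma(\_)$: any $\gamma$ in the convex hull of the $\alpha\pm\delta_i e_i$ (which contains an open neighbourhood of $\alpha$) is a convex combination $\gamma=\sum_j t_j\gamma_j$, so $\mu_\gamma(E)\ge\mu_\gamma(F_b)$ forces $\mu_{\gamma_j}(E)\ge\mu_{\gamma_j}(F_b)$ for at least one corner $\gamma_j$, hence $\operatorname{c}_1(E)\in\bigcup_i(S_i^+\cup S_i^-)$. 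With that uniform finite set in hand, your linear-functional argument in the second and third steps goes through verbatim and completes the proof.
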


\begin{proof}
	By \cite[Theorem 3.3]{greb_movable_2016}, for every $b\in \PP^1$, there is an open neighbourhood $\alpha\in U_b$ such that $F_b$ is still $\gamma$-stable for every $\gamma\in U_b$. Note that we cannot conclude our assertion immediately, as $\bigcap_{b\in \PP^1} U_b$ might not be open anymore. However, we can still deduce the result as follows.
	
	Let $e_1,\dots, e_\ell$ be a base of $ N_1(\KA)_{\RR}$. Choose some $\delta_1,\dots,\delta_\ell>0$ such that $\alpha\pm \delta_i e_i$ is still big for every $i=1,\dots,n$. For $i=1,\dots,n$, we consider the two sets
	\[
	S_i^\pm\coloneqq \bigl\{\operatorname{c}_1(E)\mid E\subset F_b \text{ for some $b\in \PP^1$ and } \mu_{\alpha\pm \delta_i e_i}(G)\ge \mu_{\alpha\pm \delta_i e_i}(F_b)\bigr\}  
	\]
	Both sets are finite by the Grothendieck lemma; see \autoref{lem:finite_S_set}. Hence, their union $S_i=S_i^+\cup S_i^-$ is finite too. Let $I\subset \PP^1$ be a finite subset such that every $z\in S_i$ is of the form $z=\operatorname{c}_1(E)$ for some subsheaf $E\subset F_b$ for some $b\in I$. Now, \cite[Theorem 3.3]{greb_movable_2016} gives open neighbourhoods $\alpha\in V_b$ such that $F_b$ is $\gamma$-stable for all $\gamma\in V_b$. Being a finite intersection, $V\coloneqq \bigcap_{b\in I}V_b$ is still an open neighbourhood of $\alpha$. Hence, there is some $0<\eps_i\le \delta_i$ such that $\gamma_i^+\coloneqq \alpha+ \eps_i e_i$ and $\gamma_i^-\coloneqq \alpha- \eps_i e_i$ are both contained in $V$. We claim that $F_b$ is $\gamma_i^+$-stable and $\gamma_i^-$-stable for \emph{all} $b\in \PP^1$. To see this,  let $b\in \PP^1$ and let $E\subset F_b$ be a subsheaf. In the case that $\operatorname{c}_1(E)\notin S_i$, \autoref{lem:gammaadd}(1) gives $\mu_{\gamma_i^+}(E)<\mu_{\gamma_i^+}(F_b)$ and $\mu_{\gamma_i^-}(E)<\mu_{\gamma_i^-}(F_b)$ as $\gamma_i^\pm$ lies on the line segment connecting $\alpha$ and $\alpha\pm \delta_i e_i$. In the case that $\operatorname{c}_1(E)\notin S_i$, there is some $b'\in I$ and some subsheaf $E'\subset F_{b'}$ with $\operatorname{c}_1(E')=\operatorname{c}_1(E)$. Since, by the choices we made, we already know $\gamma_i^\pm$-stability of $F_{b'}$, we get
	\[
	\mu_{\gamma_i^\pm}(E)=\mu_{\gamma_i^\pm}(E')<\mu_{\gamma_i^\pm}(F_{b'})=\mu_{\gamma_i^\pm}(F_b)\,.
	\]
	By \autoref{lem:gammaadd}(2), we get $\gamma$-stability of all $F_b$ for $\gamma$ in the convex hull 
	\[
	\Conv(\gamma_1^+,\gamma_1^-,\dots, \gamma_\ell^+,\gamma_\ell^-)\,.
	\]
	Since $\gamma_i=\alpha\pm \eps_i e_i$ and the $e_i$ form a base, this convex hull contains an open neighbourhood of $\alpha$.
\end{proof}

\begin{lemma}\label{bigclass}
	For any ample class $D\in \NS(\KA)$ the class $DH_K^{2n-2}$ is big. 
\end{lemma}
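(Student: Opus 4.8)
The plan is to reduce the statement to positivity properties of the Beauville--Bogomolov--Fujiki form of the hyperk\"ahler manifold $\KA$, via the Fujiki relation and the standard duality between pseudo-effective divisor classes and movable curve classes (cf.\ \cite{greb_movable_2016}). Recall that $\mathrm{Big}(\KA)=\Mov(\KA)^{\circ}$ consists precisely of those curve classes $\gamma\in N_1(\KA)_\RR$ with $\gamma\cdot E>0$ for every nonzero class $E$ in the pseudo-effective cone $\overline{\Eff}^1(\KA)\subset N^1(\KA)_\RR$; since $\overline{\Eff}^1(\KA)$ is a full-dimensional, pointed, closed cone this is just convex duality. So I must show $D\cdot H_K^{2n-2}\cdot E>0$ for every $0\ne E\in\overline{\Eff}^1(\KA)$.

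First I would record the relevant instance of the Fujiki relation for the $2n$-dimensional irreducible holomorphic symplectic manifold $\KA$. Writing $q$ for its Beauville--Bogomolov--Fujiki form (and $q(\alpha)\coloneqq q(\alpha,\alpha)$), expanding $\int_{\KA}H_K^{2n-2}\cdot D\cdot E$ as a sum over the $(2n-1)!!$ perfect matchings of the $2n$ factors and grouping the terms according to whether the factor $D$ is matched with $E$ or with one of the copies of $H_K$, one obtains
\[
D\cdot H_K^{2n-2}\cdot E \;=\; a_n\, q(H_K)^{\,n-1}\, q(D,E)\;+\;b_n\, q(H_K)^{\,n-2}\, q(H_K,D)\, q(H_K,E)
\]
with certain explicit positive constants $a_n,b_n$ (here the exponent $n-2$ is $\ge 0$ precisely because $n\ge 2$). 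Hence it suffices to prove $q(H_K)>0$, $q(H_K,D)>0$, $q(H_K,E)\ge 0$, and --- crucially, with a strict inequality --- $q(D,E)>0$ for all $0\ne E\in\overline{\Eff}^1(\KA)$.

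The first two are immediate. The class $H_K$ is big and nef, being the pullback of an ample class along the birational Hilbert--Chow morphism $\mathsf{HC}_K\colon \KA\to\SA$; hence $q(H_K)>0$ and $H_K$ lies in the positive cone $\cC$ (the connected component of $\{q>0\}$ containing the ample classes). As $D$ is ample, $D\in\cC$ as well, and the Lorentzian signature of $q$ on $N^1(\KA)_\RR$ forces $q(H_K,D)>0$. For a prime divisor $E\subset\KA$ the Fujiki relation also gives $\int_{\KA}H_K^{2n-1}\cdot E=(\text{positive})\cdot q(H_K)^{n-1}q(H_K,E)$ and $\int_{\KA}D^{2n-1}\cdot E=(\text{positive})\cdot q(D)^{n-1}q(D,E)$; comparing these with $\int_{\KA}H_K^{2n-1}\cdot E=\int_E(H_K|_E)^{2n-1}\ge 0$ (as $H_K|_E$ is nef) and with $\int_{\KA}D^{2n-1}\cdot E=\int_E(D|_E)^{2n-1}>0$ (as $D|_E$ is ample on the $(2n-1)$-fold $E$) yields $q(H_K,E)\ge 0$ and $q(D,E)>0$. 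By linearity both extend to arbitrary nonzero effective $\RR$-divisor classes, and the weak inequality $q(H_K,E)\ge 0$ extends to all pseudo-effective $E$ by taking limits.

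The remaining point, and the one I expect to be the main obstacle, is to upgrade $q(D,E)>0$ from effective classes to all nonzero pseudo-effective classes, since passing to the closure a priori only gives $q(D,E)\ge 0$. I would handle this with the divisorial Zariski decomposition $E=P+N$: the negative part $N$ is a nonnegative combination of prime divisors, so $q(D,N)\ge 0$, with equality only when $N=0$; and since $\KA$ is hyperk\"ahler the positive part $P$, being modified nef, lies in the closure $\overline{\cC}$ of the positive cone, so $q(D,P)>0$ by the Lorentzian light-cone positivity whenever $P\ne 0$. As $E\ne 0$ forces $P\ne 0$ or $N\ne 0$, we get $q(D,E)=q(D,P)+q(D,N)>0$. (Alternatively one can avoid the Zariski decomposition: granting that the complete-intersection class $D^{2n-1}$ is big, the displayed Fujiki formula shows that $D\cdot H_K^{2n-2}-t\,D^{2n-1}$ still pairs nonnegatively with every pseudo-effective divisor class for all sufficiently small $t>0$, hence is movable, and then $D\cdot H_K^{2n-2}=(D\cdot H_K^{2n-2}-t\,D^{2n-1})+t\,D^{2n-1}$ is big by \autoref{cone_int}.) Combining everything, $D\cdot H_K^{2n-2}$ pairs strictly positively with $\overline{\Eff}^1(\KA)\setminus\{0\}$, and is therefore big.
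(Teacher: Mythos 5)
Your proof is correct, but it takes a genuinely different route from the paper. The paper's argument is soft: since the Hilbert--Chow morphism $\mathsf{HC}_K\colon \KA\to \SA$ is semismall, $H_K$ is a lef class, and the hard Lefschetz theorem of de Cataldo--Migliorini makes $B\mapsto BH_K^{2n-2}$ a \emph{linear isomorphism} $N^1(\KA)_\RR\to N_1(\KA)_\RR$; it then maps an open neighbourhood of $D$ inside the ample cone to an open subset of $\Mov(\KA)$, which is therefore contained in $\Mov(\KA)^\circ=\mathrm{Big}(\KA)$. You instead verify the dual characterization of bigness directly, expanding $D\cdot H_K^{2n-2}\cdot E$ via the polarized Fujiki relation and checking strict positivity against every nonzero pseudo-effective divisor class; the only delicate point, passing from effective to pseudo-effective classes, you handle correctly via Boucksom's divisorial Zariski decomposition (or the alternative you sketch, which reduces to the bigness of $D^{2n-1}$ plus \autoref{cone_int}). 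The paper's proof is shorter, avoids the Beauville--Bogomolov--Fujiki machinery entirely, and yields the stronger statement that cupping with $H_K^{2n-2}$ is an isomorphism of N\'eron--Severi spaces; your proof is more quantitative (an explicit positive lower bound on the pairing with each pseudo-effective class) and applies verbatim to any big and nef class with positive Beauville--Bogomolov--Fujiki square on a projective hyperk\"ahler manifold, without needing semismallness of any contraction. Both are complete arguments; yours simply trades the lef/semismall input for standard hyperk\"ahler cone theory.
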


\begin{proof}
	The Hilbert-Chow morphism $\mathsf{HC}_K:\KA\rightarrow \SA$ is semismall, hence the class $H_K$ is lef and thus by \cite[Theorem 2.3.1]{decataldo_2002} the map
	\begin{equation*}
		q\coloneqq (\_)\cap H_K^{2n-2}: N^1(\KA)_{\mathbb{R}} \rightarrow N_1(\KA)_{\mathbb{R}},\,\,\, B\mapsto BH_K^{2n-2}
	\end{equation*}	
	is a linear isomorphism.
	
	Let $D\in \NS(\KA)\subset N^1(\KA)_{\mathbb{R}}$ be an ample class. We see that have $q(D)=DH_K^{2n-2}\in \Mov(\KA)$. Since the ample cone is open in $N^1(\KA)_{\RR}$ we can pick an open neighborhood $V$ of $D$, that is for every $B\in V$ the class $B$ is ample and so $q(B)=BH_K^{2n-2}\in \Mov(\KA)$. It follows that $q(V)\subset \Mov(\KA)$. But since $q$ is a linear isomorphism and $ N^1(\KA)_{\mathbb{R}}$ is finite dimensional, the set $q(V)$ must in fact be open in $\Mov(\KA)$ that is $q(V)\subset \mathrm{Mov}(\KA)^{\circ}=\mathrm{Big}(\KA)$. We conclude
	\begin{equation*}
		DH_K^{2n-2}=q(D)\in \mathrm{Big}(\KA).\qedhere
	\end{equation*}
\end{proof}

For the next lemma we need some notation: let $P\neq Q$ be two points in $\RR^n$. Denote the connecting vector $\overrightarrow{PQ}$ by $\vec{v}$. Furthermore let $R\coloneqq \mathrm{conv}(P,B_r(Q))$ be the convex hull of $P$ and the (open) ball around $Q$ with some radius $r>0$.

\begin{lemma}\label{curve_cone}
Let $\gamma: [-a,a]\rightarrow \RR^n$ be a differentiable curve such that:
\begin{equation*}
\gamma(0)=P \,\,\,\text{and}\,\,\,\gamma'(0)=\lambda\vec{v}\,\,\,\text{for some $\lambda>0$}.
\end{equation*}
Then there is some $0<a'\leqslant a$ such that $\gamma(t)\in R\setminus \{P\}$ for all $0<t<a'$.
\end{lemma}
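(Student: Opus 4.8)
The plan is to observe that, up to nothing more than the definition of convex hull, $R=\mathrm{conv}(P,B_r(Q))$ is the open "ice‑cream cone'' with apex $P$ and axis along $\vec v=\overrightarrow{PQ}$, and that the hypothesis $\gamma'(0)=\lambda\vec v$ with $\lambda>0$ says exactly that $\gamma$ leaves $P$ tangentially along this axis (not along a boundary generator of the cone). Hence the first–order term dominates and keeps $\gamma(t)$ well inside $R$ for small $t>0$. Concretely, I would first record that, since $B_r(Q)$ is convex,
\[
R=\mathrm{conv}(P,B_r(Q))=\{(1-\mu)P+\mu x\mid \mu\in[0,1],\ x\in B_r(Q)\}\,,
\]
and that differentiability of $\gamma$ at $0$ lets me write
\[
\gamma(t)=P+t\lambda\vec v+\eps(t)\,,\qquad \eps(t)\coloneqq \gamma(t)-\gamma(0)-t\gamma'(0)\,,
\]
with $\lim_{t\to 0}\eps(t)/t=0$.

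Next, for $t>0$ small enough that $t\lambda\le 1$, I would set $\mu(t)\coloneqq t\lambda\in(0,1]$ and $x(t)\coloneqq Q+\tfrac{1}{t\lambda}\eps(t)$. Then
\[
|x(t)-Q|=\tfrac1\lambda\cdot\tfrac{|\eps(t)|}{t}\ \longrightarrow\ 0\quad (t\to 0^+)\,,
\]
so after shrinking the interval we have $x(t)\in B_r(Q)$; and a direct computation gives
\[
(1-\mu(t))P+\mu(t)x(t)=P+t\lambda\Bigl(\vec v+\tfrac{1}{t\lambda}\eps(t)\Bigr)=P+t\lambda\vec v+\eps(t)=\gamma(t)\,,
\]
whence $\gamma(t)\in R$. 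To rule out $\gamma(t)=P$, note that $P\neq Q$ gives $|\vec v|>0$, so $|\gamma(t)-P|\ge t\lambda|\vec v|-|\eps(t)|>0$ once $t$ is small enough that $|\eps(t)|<t\lambda|\vec v|$, which is guaranteed by $|\eps(t)|=o(t)$. Taking $a'$ to be the minimum of the finitely many thresholds produced above then yields $\gamma(t)\in R\setminus\{P\}$ for all $0<t<a'$.

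There is no genuine obstacle here beyond bookkeeping the thresholds; the single conceptual point — and the reason the statement is true — is that $\gamma'(0)$ is a \emph{positive} scalar multiple of $\vec v$, which places the tangent ray of $\gamma$ at $P$ along the \emph{axis} of the cone $R$ rather than along its boundary, so that the linear term $t\lambda\vec v$ pushes $\gamma(t)$ strictly into the interior and the remainder $\eps(t)=o(t)$ cannot push it back out.
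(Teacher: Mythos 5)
Your proof is correct and complete. The paper itself does not write out an argument here (it declares the proof to be ``elementary trigonometry and real analysis'' and leaves it to the reader), so there is nothing to compare line by line; but your route is a clean way to discharge the claim. The key device is the exact parametrization $R=\{(1-\mu)P+\mu x\mid \mu\in[0,1],\ x\in B_r(Q)\}$, valid because $B_r(Q)$ is convex, together with the explicit witnesses $\mu(t)=t\lambda$ and $x(t)=Q+\tfrac{1}{t\lambda}\eps(t)$; the condition $x(t)\in B_r(Q)$ for small $t$ is precisely the statement $\eps(t)=o(t)$, and $\gamma(t)\neq P$ follows from $t\lambda|\vec v|-|\eps(t)|>0$. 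This replaces the angle estimate the paper alludes to (comparing the angle of $\gamma(t)-P$ with the half-opening of the cone at its apex) by a purely algebraic verification, which is arguably tidier and avoids any case analysis near the apex. All thresholds you invoke are genuinely finite in number and each holds on an interval $(0,\delta)$, so taking their minimum for $a'$ is legitimate.
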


\begin{proof}
    The proof of this lemma uses elementary trigonometry and real analysis and is left to the reader.
\end{proof}

We are now ready to state our main result of the section.

\begin{theorem}\label{thm:universal-extension-P1}
	Let $D\in \NS(\KA)$ be an ample class. There is a real number $e>0$ such that for every $\epsilon\in (0,e)$ the vector bundle $F_b$ is slope stable with respect to the ample class $H_{\epsilon}=H_K+\epsilon D$ for any $b\in \PP^1$.
\end{theorem}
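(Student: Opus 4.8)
The plan is to realise slope stability with respect to the ample class $H_\epsilon = H_K + \epsilon D$ as slope stability with respect to a movable \emph{curve} class, and then to deduce it from \autoref{stable_allb} by a limiting argument as that curve class degenerates to $\gamma_0$. First observe that $H_\epsilon$ is ample for every $\epsilon > 0$, being the sum of the nef class $H_K$ and the ample class $\epsilon D$, and that $\mu_{H_\epsilon}(\_) = \mu_{\Phi(H_\epsilon)}(\_)$ with $\Phi(H_\epsilon) = H_\epsilon^{2n-1}$; hence it suffices to prove $\gamma(\epsilon)$-stability of every $F_b$, where $\gamma(\epsilon)\coloneqq (H_K + \epsilon D)^{2n-1}$. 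Viewed as a polynomial, hence differentiable, curve in $N_1(\KA)_\RR$, this satisfies $\gamma(0) = (H_K)^{2n-1} = \gamma_0$ and $\gamma'(0) = (2n-1)\,DH_K^{2n-2}$, which by \autoref{bigclass} is a positive multiple of a big class.

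Taking the fixed big class $\beta$ of this section to be $\beta \coloneqq DH_K^{2n-2}$, \autoref{stable_allb} provides $\epsilon' > 0$ such that, fixing any $\epsilon_1 \in (0, \epsilon')$ and putting $\alpha \coloneqq \gamma_{\epsilon_1} = \gamma_0 + \epsilon_1\beta$, the bundle $F_b$ is $\alpha$-stable for all $b \in \PP^1$; moreover $\alpha$ is big by \autoref{cor:big}. Then \autoref{open_stable} yields an open neighbourhood $U$ of $\alpha$ such that $F_b$ is $\gamma$-stable for every $\gamma \in U$ and every $b$; after shrinking $U$ we may assume $U \subseteq \mathrm{Big}(\KA)$, and since $F_b$ is only \emph{properly} $\gamma_0$-semistable by \autoref{cor:Fbss}, necessarily $\gamma_0 \notin U$. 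Choose $r > 0$ with $B_r(\alpha) \subseteq U$ and set $R \coloneqq \Conv(\gamma_0, B_r(\alpha))$.

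The next step is \autoref{curve_cone}, applied with $P = \gamma_0$ and $Q = \alpha$: then $\vec v = \overrightarrow{PQ} = \epsilon_1\beta$ and $\gamma'(0) = (2n-1)\beta = \tfrac{2n-1}{\epsilon_1}\,\vec v$ is a positive multiple of $\vec v$, so there is some $e > 0$ with $\gamma(\epsilon) \in R \setminus \{\gamma_0\}$ for all $\epsilon \in (0, e)$. Since $B_r(\alpha)$ is convex and $\gamma_0 \notin B_r(\alpha)$, every class in $R \setminus \{\gamma_0\}$ can be written as $s\gamma_0 + t\gamma''$ with $\gamma'' \in B_r(\alpha) \subseteq U$, $s \ge 0$ and $t > 0$. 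As $F_b$ is $\gamma_0$-semistable by \autoref{cor:Fbss} and $\gamma''$-stable by the choice of $U$, \autoref{lem:gammaadd}(2) shows $F_b$ is $(s\gamma_0 + t\gamma'')$-stable for every $b$. In particular $F_b$ is $\gamma(\epsilon)$-stable, equivalently $H_\epsilon$-slope stable, for all $\epsilon \in (0, e)$ and all $b \in \PP^1$, which is the claim.

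The main obstacle, and the reason \autoref{stable_allb} cannot just be quoted, is that $\gamma(\epsilon) = H_\epsilon^{2n-1}$ only agrees with the ray $\gamma_0 + \RR_{>0}\beta$ to first order in $\epsilon$: the $O(\epsilon^2)$ discrepancy means $\gamma(\epsilon)$ is never exactly one of the curve classes $\gamma_{\epsilon'}$ for which stability is already known. Passing through the open neighbourhood $U$ of \autoref{open_stable} absorbs this discrepancy, \autoref{curve_cone} records the elementary fact that a curve leaving $\gamma_0$ in a direction pointing into $U$ enters the cone $\Conv(\gamma_0, U)$, and proper $\gamma_0$-semistability (\autoref{cor:Fbss}) together with the convexity input \autoref{lem:gammaadd}(2) propagates $U$-stability throughout that cone. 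A minor point worth recording is that all the classes arising on the segment from $\gamma_0$ into $U$ stay movable, since $\gamma_0 \in \Mov(\KA)$ and $U \subseteq \mathrm{Big}(\KA)$, so \autoref{cone_int} keeps the relevant convex combinations inside $\Mov(\KA)$.
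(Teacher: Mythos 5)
Your proof is correct and follows essentially the same route as the paper: take $\beta = DH_K^{2n-2}$ (big by \autoref{bigclass}), get a common stability class $\gamma_0+\epsilon_1\beta$ from \autoref{stable_allb}, fatten it to an open ball via \autoref{open_stable}, form the cone $\Conv(\gamma_0,\,\cdot\,)$ on which stability propagates by \autoref{cor:Fbss} and \autoref{lem:gammaadd}(2), and use \autoref{curve_cone} to show the curve $\epsilon\mapsto (H_K+\epsilon D)^{2n-1}$ enters that cone. Your added remarks (explicitly writing points of $R\setminus\{\gamma_0\}$ as $s\gamma_0+t\gamma''$, and noting why the $O(\epsilon^2)$ discrepancy forces the detour through the open neighbourhood) are accurate elaborations of steps the paper leaves implicit.
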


\begin{proof}
 By \autoref{bigclass} the class $\beta=DH_K^{2n-2}$ is big. 
 	By \autoref{stable_allb} we know that there is some real number $\epsilon>0$ such that $\gamma_{\epsilon}=\gamma_0+\epsilon \beta \in \bigcap_{b\in \PP^1}\Stab(F_b)$. Using \autoref{open_stable} we can find an open ball neighborhood $W$ of $\gamma_{\epsilon}$ such that 
	\begin{equation*}
		W\subset \bigcap_{b\in \PP^1}\Stab(F_b).
	\end{equation*}
Furthermore, by \autoref{cor:Fbss}, $\gamma_0\in\bigcap_{b\in \PP^1}\SStab(F_b)$. Hence, we can set  
	$R\coloneqq \mathrm{conv}(\gamma_0,W)$ which by \autoref{lem:gammaadd} satisfies 
	\begin{equation*}
		R\setminus \left\lbrace \gamma_0\right\rbrace \subset \bigcap_{b\in \PP^1}\Stab(F_b)\,.
	\end{equation*}
 It is now enough to show that there exists some $e>0$ such that the curve
	\begin{equation*}
		\gamma: [-a,a]\rightarrow N_1(\KA)_{\RR},\,\,\,t\mapsto (H_K+tD)^{2n-1}
	\end{equation*}
	lies in $R\setminus\{\gamma_0\}\subset \bigcap_{b\in \PP^1}\Stab(F_b)$ for all $t$ with $0<t<e$. 
But we have $\gamma(0)=\gamma_0$ and $\gamma'(0)=(2n-1)DH_K^{2n-2}=(2n-1)\beta$. Since $\overrightarrow{\gamma_0 \gamma_{\epsilon}}=\epsilon\beta$, the claim follows from \autoref{curve_cone}.
\end{proof}

\section{Construction of the universal family}\label{sec:component}

In this section, we construct a universal family of stable bundles on $\KA$, and show that it gives a smooth irreducible component of the moduli space.

\subsection{A complete family via elementary modification}

Let $\hA$ be the dual abelian surface of $A$, $o \in \hA$ the neutral point, and $\cL \in \Coh(A \times \hA)$ the universal line bundle. We write
$$ \tau \colon \bA \longrightarrow \hA $$
for the blowup of $\hA$ at $o \in \hA$, and denote the exceptional divisor as $E$. Moreover, for the projections $p$ and $q$ in \eqref{projections_universal}, we define
$\widehat{p} = p \times \id_{\hA}$ and $\widehat{q} = q \times \id_{\hA}$. Then we obtain a universal family of tautological bundles on $\KA$ parametrized by $\hA$
$$ \cL^{(n)} \coloneqq (\widehat{q})_\ast (\widehat{p})^\ast \cL \in \Coh(\KA \times \hA), $$
which induces a family
$$ \cF^{(0)} \coloneqq (\id_{\KA} \times \tau)^\ast \cL^{(n)} \in \Coh(\KA \times \bA) $$
whose restriction on $\KA \times E$ is given as
$$ \cF^{(0)}_E \coloneqq \cF^{(0)}|_{\KA \times E} = \cO_A^{(n)} \boxtimes \cO_E = (\cO_{\KA} \oplus Q) \boxtimes \cO_E. $$
The composition of the obvious surjective maps
$$ \cF^{(0)} \longrightarrow \cF^{(0)}_E \longrightarrow Q \boxtimes \cO_E $$
allows us to construct a new family $\cF^{(1)} \in \Coh(\KA \times \bA)$ as the kernel in the exact sequence
\begin{equation}\label{eqn:langtonF1}
	0 \longrightarrow \cF^{(1)} \longrightarrow \cF^{(0)} \longrightarrow Q \boxtimes \cO_E \longrightarrow 0.
\end{equation}

We view $\cL$ as a family of sheaves parametrized by $\hA$. As a notation, for any morphism $T \to \hA$, we will use $\cL_T$ for the pullback of $\cL$ to $A \times T$. If $T = \{ t \}$ is a single point, we write $\cL_{\{t\}}$ as $\cL_t$ for simplicity. Similarly we can view $\cL^{(n)}$ as a family parametrized by $\hA$, $\cF^{(0)}$ and $\cF^{(1)}$ as families parametrized by $\bA$, so similar notations apply.

The rest of the section will be devoted to show that $\cF^{(1)}$ is a family of stable bundles on $\KA$ parametrized by $\bA$, which identifies $\bA$ as a connected component of the relevant moduli space of stable bundles on $\KA$.

\subsection{Objects in the family}

We first study individual fibers of the family $\cF^{(1)}$. As a preparation we give a more precise description of $\cF^{(1)}_E$. Restricting \eqref{eqn:langtonF1} to $K_n(A) \times E$ and replacing the last surjective map by its kernel we obtain
\begin{equation}\label{eqn:restriction-Langton}
	0 \longrightarrow Q \boxtimes \cO_E(-E) \longrightarrow \cF^{(1)}_E \longrightarrow \cO_{K_n(A)} \boxtimes \cO_E \longrightarrow 0.
\end{equation}
Further restricting \eqref{eqn:restriction-Langton} to a closed point $y \in E$, we observe that $\cF^{(1)}_y$ fits in an exact sequence
\begin{equation}\label{eqn:exceptional-fiber}
	0 \longrightarrow Q \longrightarrow \cF^{(1)}_y \longrightarrow \cO_{K_n(A)} \longrightarrow 0. 
\end{equation}
The following observation will be crucial to our construction.
	
\begin{lemma}\label{lem:non-trivial-ext}
	The extension \eqref{eqn:exceptional-fiber} is non-trivial for each closed point $y \in E$.
\end{lemma}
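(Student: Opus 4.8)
The idea is that the family $\cF^{(1)}$ is obtained from $\cF^{(0)}$ by an elementary modification along the exceptional divisor $E$, and one has to verify that this modification genuinely "twists" the trivially-split bundle $\cO_A^{(n)} = \cO_{\KA} \oplus Q$ into non-split extensions over every point of $E$, rather than reproducing the split extension. The key input is that the normal bundle of $E$ in $\bA$ is $\cO_E(-1) = \cO_{\PP^1}(-1)$, so that the modification records a \emph{first-order} deformation of the split bundle in the direction normal to $E$, and this deformation is precisely the derivative of the rational map $f$ along the blown-up point. The plan is to extract the extension class of \eqref{eqn:exceptional-fiber} from the geometry of $\cF^{(1)}$ near $E$ and show it is non-zero.

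First I would compare $\cF^{(1)}$ with $\cF^{(0)}$ in a neighbourhood of $\KA \times E$. From \eqref{eqn:langtonF1}, away from $E$ the two families agree, while along $E$ the sequence \eqref{eqn:restriction-Langton} shows that $\cF^{(1)}_E$ is an extension of $\cO_{\KA} \boxtimes \cO_E$ by $Q \boxtimes \cO_E(-E)$. Restricting to a point $y \in E$ gives \eqref{eqn:exceptional-fiber}; the extension class lives in $\Ext^1_{\KA}(\cO_{\KA}, Q) \cong \CC^2$, which is two-dimensional by \autoref{thm:sec2}/\autoref{lem:hG}. I would identify the map $E = \PP(\text{something}) \to \PP(\Ext^1(\cO_{\KA}, Q)) = \PP^1$ sending $y$ to the class of \eqref{eqn:exceptional-fiber}, and show it is well-defined (no base point) and in fact an isomorphism; non-triviality of the extension for every $y$ is exactly the statement that this map is everywhere defined, i.e.\ the class is never zero.

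The cleanest route is via the Kodaira--Spencer / connecting-homomorphism description of the elementary modification. Consider the sequence \eqref{eqn:langtonF1} and tensor/restrict appropriately: the modification along $E$ with conormal bundle $\cO_E(E) = \cO_{\PP^1}(-1)$ produces, by the standard Langton-type computation, that $\cF^{(1)}_E$ sits in \eqref{eqn:restriction-Langton}, and the extension class of the fiber sequence \eqref{eqn:exceptional-fiber} at $y$ is the image of the canonical generator of $\cO_E(-E)_y$ under a connecting map that factors through the Kodaira--Spencer class of the \emph{original} family $\cL^{(n)}$ at $o \in \hA$, composed with the splitting projection $\cO_A^{(n)} \twoheadrightarrow Q$. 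Concretely: the Kodaira--Spencer map of $\cL^{(n)}$ at $o$ is $T_o\hA \to \Ext^1(\cO_A^{(n)}, \cO_A^{(n)})$, and I would show it lands (after the relevant projections) isomorphically onto $\Ext^1(\cO_{\KA}, Q) \subset \Ext^1(\cO_A^{(n)}, \cO_A^{(n)})$. This reduces the lemma to showing the Kodaira--Spencer map of $\cL^{(n)}$ at $o$, post-composed with $\Ext^1(\cO_A^{(n)}, \cO_A^{(n)}) \to \Ext^1(\cO_{\KA}, Q)$, is injective — equivalently an isomorphism since both sides are $2$-dimensional (using $T_o\hA \cong \Ho^1(\cO_A) = V$, matching \eqref{eq:thetav}).

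The main obstacle, I expect, is making the identification of the elementary-modification extension class with the Kodaira--Spencer class fully rigorous — tracking the twist by $\cO_E(-E)$ through the restriction maps and checking signs/identifications so that "non-zero normal direction at $o$" really translates to "non-zero extension class". An alternative, possibly more hands-on argument: suppose \eqref{eqn:exceptional-fiber} splits for some $y \in E$, so $\cF^{(1)}_y \cong \cO_{\KA} \oplus Q$. Then $\cF^{(1)}$ would be a flat family over $\bA$ whose fiber at $y$ is the split bundle; but the generic fiber of $\cF^{(1)}$ over $\bA \setminus E$ is $\cL_t^{(n)}$ for $t \neq o$, which is \emph{stable} by \cite[Theorem 2.10]{reede_stable_2022}, hence simple, so $\hom(\cF^{(1)}_t, \cF^{(1)}_t) = 1$, while $\hom(\cO_{\KA} \oplus Q, \cO_{\KA} \oplus Q) \geq 2$; upper semicontinuity of $\hom$ in a flat family gives a contradiction provided the split fiber is a \emph{limit} of the stable ones, which it is since $E$ is in the closure of $\bA \setminus E$ and $y \in E$ is arbitrary. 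One must only rule out that $\hom$ could be forced up to $2$ on all of $E$ for a non-split reason — but a non-split extension $F_b$ of $\cO_{\KA}$ by $Q$ with $\Hom(Q,\cO_{\KA}) = 0$ has $\hom(F_b, F_b) = 1$ by the already-established simplicity (\autoref{lem:hextG}, \autoref{lem:P1-family}), so semicontinuity forces every fiber over $E$ to be either split or such a non-split $F_b$, and the split locus would be a proper closed subset unless it is everything — which the semicontinuity argument against the stable generic fiber excludes. I would present the Kodaira--Spencer argument as the main proof and perhaps remark on the semicontinuity heuristic.
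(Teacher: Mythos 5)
Your main route correctly identifies the right framework: the extension class of \eqref{eqn:exceptional-fiber} at $y\in E=\PP(T_o\hA)$ is the image of the Kodaira--Spencer class of $\cL^{(n)}$ at $o$ in the corresponding tangent direction under the natural projection $\Ext^1(\cO_A^{(n)},\cO_A^{(n)})\to\Ext^1(\cO_{\KA},Q)$, and the lemma is equivalent to the injectivity of this projected Kodaira--Spencer map. But the proposal then stops exactly where the mathematical content begins: you never prove that injectivity. Observing that source and target are both $2$-dimensional does not make a linear map between them an isomorphism, and ``matching \eqref{eq:thetav}'' is only a guess --- identifying the projected Kodaira--Spencer classes with the classes $\vartheta_v$ under the McKay correspondence would itself require a real argument. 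The paper supplies precisely this missing non-degeneracy, in disguised form: assuming the extension splits at $y$, a second elementary modification along a transverse curve shows that the section $1\in\Ho^0(\cO_A^{(n)})$ would lift to the first-order thickening $D$ of $o$ in the corresponding direction, forcing $h^0(\KA\times D,\cL_D^{(n)})\ge 2$; this contradicts \autoref{lem:LDcoh}, which computes $h^0(\KA\times D,\cL_D^{(n)})=1$ via the Fourier--Mukai equivalence $\FM_{\cL}$ (reducing it to $\hom_{\hA}(\cO_o,\cO_D)=1$). Some concrete input of this kind is unavoidable, and your plan contains none.

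Your fallback semicontinuity argument is not a heuristic with a fixable wrinkle --- it is false. Upper semicontinuity of $y\mapsto\hom(\cF^{(1)}_y,\cF^{(1)}_y)$ says this function can only jump \emph{up} along closed subsets; a flat family with simple generic fiber and a non-simple fiber over a special point is entirely consistent with it. Indeed, the unmodified family $\cL^{(n)}$ over $\hA$ has stable (hence simple) fibers over $\hA\setminus\{o\}$ and the split fiber $\cO_{\KA}\oplus Q$ over $o$, so your purported contradiction would equally ``prove'' that $\cO_A^{(n)}$ does not split, which is absurd. That route cannot be repaired; the burden falls entirely on the Kodaira--Spencer non-vanishing, which must be established by a computation such as the one the paper performs in \autoref{lem:LDcoh}.
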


\begin{proof}
	We fix a smooth curve $\overline{C} \subseteq \bA$ which intersects $E$ transversely at $y$. By restricting \eqref{eqn:langtonF1} to $\KA \times \overline{C}$ we obtain
	\begin{equation}\label{eqn:first_on_C}
		0 \longrightarrow \cF^{(1)}_{\overline{C}} \longrightarrow \cF^{(0)}_{\overline{C}} \longrightarrow Q \boxtimes \cO_y \longrightarrow 0,
	\end{equation}
	which is an elementary transformation of $\cF^{(0)}_{\overline{C}}$. 
	
	If $\cF^{(1)}_y = Q \oplus \cO_{\KA}$, then we can similarly apply another elementary transformation and obtain
	\begin{equation*}\label{eqn:second_on_C}
		0 \longrightarrow \cF^{(2)}_{\overline{C}} \longrightarrow \cF^{(1)}_{\overline{C}} \longrightarrow Q \boxtimes \cO_y \longrightarrow 0.
	\end{equation*}
	Combining both sequences we obtain
	\begin{equation*}\label{eqn:third_on_C}
		0 \longrightarrow \cF^{(2)}_{\overline{C}} \longrightarrow \cF^{(0)}_{\overline{C}} \longrightarrow \overline{Q}^{(2)} \longrightarrow 0,
	\end{equation*}
	where $\overline{Q}^{(2)}$ fits in an exact sequence
	\begin{equation}\label{eqn:Q_2}
		0 \longrightarrow Q \boxtimes \cO_y \longrightarrow \overline{Q}^{(2)} \longrightarrow Q \boxtimes \cO_y \longrightarrow 0.
	\end{equation}
	Since $\overline{Q}^{(2)}_y$ is a quotient of $\cF^{(0)}_y = \cO_{\KA} \oplus Q$, and $\Hom(\cO_{\KA}, Q) = 0$, it follows that $\overline{Q}^{(2)}_y = Q \boxtimes \cO_y$. Therefore \eqref{eqn:Q_2} implies that $\overline{Q}^{(2)}$ is flat over $\overline{D}$, where $\overline{D}$ is the first order thickening of $y$ in $\overline{C}$. We restrict $\cF^{(0)}_{\overline{C}}$ to $\overline{D}$ and obtain
	\begin{equation}\label{eqn:G2_Q2}
		0 \longrightarrow P^{(2)} \longrightarrow \cF^{(0)}_{\overline{D}} \longrightarrow \overline{Q}^{(2)} \longrightarrow 0,
	\end{equation}
	where the kernel $P^{(2)}$ is also flat over $\overline{D}$ with closed fiber $P^{(2)}_y = \cO_{\KA}$. Since 
	$$ \Ext^1(\cO_{\KA}, \cO_{\KA}) = 0, $$
	it follows from \cite[Chapter 1, Theorem 2.7]{hartshorne_deformation_2010} that
	\begin{equation}\label{eqn:P2_trivial}
		P^{(2)} = \cO_{\KA} \otimes \cO_{\overline{D}}.
	\end{equation}
	Let $D = \tau(\overline{D}) \subseteq A$, then $\overline{D} \cong D$ via $\tau$, with the closed point of $D$ being $o \in \hA$. The following exact sequence follows from \eqref{eqn:G2_Q2} and \eqref{eqn:P2_trivial}
	\begin{equation}\label{eqn:extension_on_D}
		0 \longrightarrow \cO_{\KA} \otimes \cO_D \longrightarrow \cL^{(n)}_D \longrightarrow Q^{(2)} \longrightarrow 0\,.
	\end{equation}
In particular,
\[
\Ho^0(\KA\times D,\cL_D^{(n)})\supset \Ho^0(\KA\times D,\cO_{\KA} \otimes \cO_D )\cong \Ho^0(D,\reg_D)\cong \CC^2\,.
\]
This contradicts \autoref{lem:LDcoh} below.
 Therefore our assumption $\cF^{(1)}_y = Q \oplus \cO_{\KA}$ cannot hold, which concludes the proof.
\end{proof}

The following lemma was used in the proof of the above result.

\begin{lemma}\label{lem:LDcoh}
 With the above notations, we have $\Ho^0(\KA\times D,\cL_D^{(n)})\cong \CC$.   
\end{lemma}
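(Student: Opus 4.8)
The statement asserts $\Ho^0(\KA\times D,\cL_D^{(n)})\cong\CC$, where $D\subset\hA$ is the first-order thickening of the neutral point $o$. The plan is to reduce the cohomology of the tautological bundle on $\KA$ to cohomology on the abelian surface $A$ via the projection formula, and then to exploit the infinitesimal nature of $D$. First I would recall that $\cL_D^{(n)}=\widehat q_*\widehat p^*\cL_D$ where $\widehat q\colon\cZ\times D\to\KA\times D$ is finite flat of degree $n+1$, so by the projection formula and finiteness of $\widehat q$ one has $\Ho^0(\KA\times D,\cL_D^{(n)})\cong\Ho^0(\cZ\times D,\widehat p^*\cL_D)$. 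Pushing forward along $p\colon\cZ\to A$ (base-changed over $D$), this becomes $\Ho^0(A\times D,\cL_D\otimes \widehat p_*\reg_{\cZ\times D})$. The key input is that $p_*\reg_\cZ$, the sheaf on $A$ underlying the tautological bundle $\reg_A^{(n)}$ pushed to $A$, or more precisely the structure sheaf of the universal family pushed along $p$, has a well-understood form; in particular its $\Ho^0$ against a line bundle is controlled.

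The cleaner route, and the one I would actually carry out, is to use the splitting already established. Restricting the universal family structure over the infinitesimal base $D$ and using that $\cL_D$ is an extension of the trivial line bundle $\reg_A$ by itself (since $D$ is the first-order neighbourhood of $o$ and $\cL_o=\reg_A$), one gets a two-step filtration of $\cL_D^{(n)}$ with graded pieces $\reg_A^{(n)}=\reg_{\KA}\oplus Q$, appearing twice. Taking global sections and using the long exact sequence, $\Ho^0(\KA\times D,\cL_D^{(n)})$ sits in
\begin{equation*}
0\to \Ho^0(\KA,\reg_A^{(n)})\to \Ho^0(\KA\times D,\cL_D^{(n)})\to \Ho^0(\KA,\reg_A^{(n)})\xrightarrow{\partial}\Ho^1(\KA,\reg_A^{(n)}),
\end{equation*}
where the connecting map $\partial$ is cup product with the extension class of $0\to\reg_A\to\cL_D\to\reg_A\to0$, i.e.\ with the nonzero tangent vector at $o\in\hA=\Pic^0(A)$. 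Now $\Ho^0(\KA,\reg_A^{(n)})=\Ho^0(\KA,\reg_{\KA})\oplus\Ho^0(\KA,Q)=\CC\oplus 0=\CC$ (using $\vec h_\sym(G)=(0,2,1,\dots)$ from \autoref{lem:hG}, so $h^0(Q)=0$), and likewise $\Ho^1(\KA,\reg_A^{(n)})=\Ho^1(\reg_{\KA})\oplus\Ho^1(Q)=0\oplus\CC^2$ since $\KA$ is hyperkähler so $\Ho^1(\reg_{\KA})=0$. Thus the exact sequence reads $0\to\CC\to\Ho^0(\cL_D^{(n)})\to\CC\xrightarrow{\partial}\CC^2$, and it suffices to prove $\partial\neq0$.

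The main obstacle is precisely showing that this connecting map $\partial$ is nonzero, i.e.\ that the infinitesimal deformation of $\reg_A$ in the $\Pic^0$ direction already kills the unique global section $1\in\Ho^0(\reg_{\KA})$ of $\reg_A^{(n)}$ after one step. I would identify $\partial$ as follows: the section $1$ of $\reg_A^{(n)}$ corresponds, under $\reg_A^{(n)}=q_*p^*\reg_A$, to the constant function; its image under $\partial$ is the class in $\Ho^1(\KA,\reg_A^{(n)})$ obtained by applying $q_*p^*$ to the Kodaira–Spencer/Atiyah class of the deformation $\cL_D$ of $\reg_A$. Concretely, the extension $0\to\reg_A\to\cL_D\to\reg_A\to0$ has class $v\in\Ho^1(A,\reg_A)=V$, and $\partial(1)$ is its image under the natural map $\Ho^1(A,\reg_A)\to\Ho^1(\KA,\reg_A^{(n)})$ coming from pullback–pushforward along $p,q$; equivalently, under the equivalence $\Psi$, it is the image of $v$ under $\Ho^1(A,\reg_A)\to\Ho^1_\sym(\reg^{\{n\}})=\Ho^1_\sym(\reg_P)\oplus\Ho^1_\sym(G)$. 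Since $\Ho^1_\sym(\reg_P)=0$, this lands in $\Ho^1_\sym(G)\cong V$, and I would check — using the explicit description $\Ho^1_\sym(G)\cong[V\otimes\rho\otimes\rho]^\sym\cong V$, $v\mapsto\vartheta_v=\sum_i ve_i\otimes e_i$ from \eqref{eq:thetav}, together with \eqref{eq:tautH} which gives $\Ho^*(\reg^{\{n\}})\cong\Ho^*(\reg_A)\otimes\Ho^*_{\sym_n}(\reg_{P_{n-1}})$ as the tensor of $\Ho^*(\reg_A)$ with a fixed factor — that this map $V\to\Ho^1_\sym(G)$ is the obvious isomorphism $v\mapsto v\otimes 1$, hence injective. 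Therefore $\partial(1)\neq0$ for $v\neq0$, the map $\partial$ is injective, and the exact sequence forces $\Ho^0(\KA\times D,\cL_D^{(n)})\cong\CC$, as claimed. An alternative, perhaps more elementary, finish avoiding the McKay identification: use $\Ho^0(\KA\times D,\cL_D^{(n)})\cong\Ho^0(A\times D,\cL_D\otimes p_*^D\reg_{\cZ_D})$ and note that a global section is a family over $D$ of sections of the tautological bundle; since $\Ho^0(A,\cL_o)=\Ho^0(A,\reg_A)=\CC$ is spanned by the constant, any such section is, to first order, a constant times $1$ plus a first-order correction, and flatness of the obstruction computation over $D$ together with $h^0(\reg_A^{(n)})=1$ pins down the dimension — but I expect the cohomological argument above via $\partial$ to be the cleanest and most rigorous, so that is the one I would write up in detail.
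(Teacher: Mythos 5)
Your proposal is correct, but it takes a genuinely different route from the paper. The paper's proof is shorter and more structural: by flat base change it rewrites $\Ho^0(\KA\times D,\cL_D^{(n)})$ as $\Ho^0(\KA,E^{(n)})$ for $E=\pr^A_*\cL_D$, applies the degree-zero part of \eqref{eq:tautH} to get $\Ho^0(E^{(n)})\cong\Ho^0(A,E)$, and then computes $\Ho^0(A,E)\cong\Hom_{\hA}(\reg_o,\reg_D)\cong\CC$ in one line using Mukai's Fourier--Mukai equivalence (under which $E=\Phi(\reg_D)$ and $\reg_A=\Phi(\reg_o)$). You instead filter $\cL_D^{(n)}$ by two copies of $\reg_A^{(n)}$ (using exactness of $q_*p^*$ applied to $0\to\reg_A\to\cL_D\to\reg_A\to0$), feed in the known cohomology $h^0(\reg_A^{(n)})=1$ and $h^1(\reg_A^{(n)})=h^1(Q)=2$, and reduce everything to the non-vanishing of the connecting map $\partial$. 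That non-vanishing is the real content of your argument, and the place where the paper's Fourier--Mukai shortcut saves work: you must identify $\partial(1)$ with the image of the Kodaira--Spencer class $v\neq 0$ under $V=\Ho^1(\reg_A)\to\Ho^1(Q)\cong\Ho^1_\sym(G)$ and show this map is injective. Your plan correctly isolates this and the claim does check out --- on the equivariant side the extension class of $\reg_A^{\{n\}}\to\cL_D^{\{n\}}$ is $\sum_i \overline p_i^*v\otimes E_{ii}$ acting on $\reg_P\otimes R$, and composing with the diagonal section $1\mapsto\sum_i\eps_i$ and projecting to the $\rho$-summand gives exactly $\vartheta_v=\sum_i ve_i\otimes e_i$, which is the isomorphism \eqref{eq:thetav} --- but this verification is only promised, not carried out, in your write-up, and it is the one step you would need to do carefully (the analogous compatibility is exactly what the paper's direct application of \eqref{eq:tautH} to the single sheaf $E$ avoids). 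What your route buys is independence from Mukai duality and a concrete picture of why the section dies at first order (it is killed by cup product with $\vartheta_v$, the same class that governs the whole of \autoref{sec:extension}); what the paper's route buys is brevity and no need to track naturality of \eqref{eq:tautH} through a long exact sequence.
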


\begin{proof}
Let $\pr^K\colon \KA\times D\to \KA$ and $\pr^A\colon A\times D\to A$ the projections to the first factors. By flat base change, we have $\pr^K_*\cL_D^{(n)}\cong (\pr^A_*\cL_D)^{(n)}$. To shorten the notation, let us write $E\coloneqq \pr^A_*\cL_D$. Then 
\[
\Ho^0(\KA\times D,\cL_D^{(n)})\cong \Ho^0(\KA, \pr^K_*\cL_D^{(n)})\cong \Ho^0(\KA,E^{(n)})\cong \Ho^0(A, E)
\]
where the last isomorphism is the degree zero part of \eqref{eq:tautH}. By the work of Mukai \cite{Muk}, the Fourier--Mukai transform $\Phi\coloneqq \FM_{\cL}\colon D^b(\hA)\to D^b(A)$ is an equivalence. We have $\Phi(\reg_D)\cong E$ and $\Phi(\reg_o)\cong \reg_A$. Hence
\[
\Ho^0(A, E)\cong \Hom_A(\reg_A,E)\cong \Hom_A(\Phi(\reg_o),\Phi(\reg_D))\cong \Hom_{\hA}(\reg_o, \reg_D)\cong \CC\,.\qedhere
\]
\end{proof}

We can now summarize the property of individual fibers of $\cF^{(1)}$.

\begin{proposition}\label{prop:fibers_of_F1}
	$\cF^{(1)} \in \Coh(\KA \times \bA)$ is a vector bundle. For every ample class $D \in \NS(\KA)$, there exists some $e>0$, such that for each closed point $y \in \bA$, the fiber $\cF^{(1)}_y$ is a stable vector bundle on $\KA$ with respect to $H_\epsilon = H_K + \epsilon D$ for each $\epsilon \in (0, e)$. Moreover the fibers of $\cF^{(1)}$ are pairwise non-isomorphic.
\end{proposition}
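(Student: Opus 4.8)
\emph{Overview of the strategy.} The proposition has three assertions — $\cF^{(1)}$ is locally free, its fibres are $H_\epsilon$-stable for a uniform choice of $\epsilon$, and the fibres are pairwise non-isomorphic — and I would treat them in that order. For local freeness: since $E\subset\bA$ is a smooth divisor, $\cO_E$ has the two-term locally free resolution $0\to\cO_{\bA}(-E)\to\cO_{\bA}\to\cO_E\to 0$, and tensoring its pullback to $\KA\times\bA$ with the locally free sheaf $Q\boxtimes\cO_{\bA}$ shows that $Q\boxtimes\cO_E$ has projective dimension $1$ at every point of the smooth variety $\KA\times\bA$. As $\cF^{(0)}$ is locally free, the defining sequence \eqref{eqn:langtonF1} then forces $\cF^{(1)}$ to have projective dimension $0$, i.e.\ to be locally free (the familiar fact that an elementary modification of a bundle along a subbundle on a smooth divisor is again a bundle). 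In particular $\cF^{(1)}$ is flat over $\bA$, each fibre $\cF^{(1)}_y$ has rank $n+1$, and $\operatorname{c}_1(\cF^{(1)}_y)=-\delta$ is independent of $y$, so $\mu_{\gamma_0}(\cF^{(1)}_y)=0$ for all $y\in\bA$.

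\emph{Stability of the fibres.} I distinguish the two kinds of fibres. For $y\in E$, the sequence \eqref{eqn:exceptional-fiber} together with \autoref{lem:non-trivial-ext} identifies $\cF^{(1)}_y$ with a non-trivial extension $F_b$ of $\reg_{\KA}$ by $Q$, and \autoref{thm:universal-extension-P1} already yields an $e_1>0$ so that all these $F_b$ are $H_\epsilon$-stable for $\epsilon\in(0,e_1)$. For $y\notin E$ one has $\cF^{(1)}_y=\cF^{(0)}_y=\cL^{(n)}_{\tau(y)}$, the tautological bundle of the non-trivial degree-$0$ line bundle $\cL_{\tau(y)}$ on $A$; the plan is to first prove $\gamma_0$-stability of these. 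Running the transform argument of \autoref{lem:Q-stable-HK} (with the same slope comparison), a $\gamma_0$-destabilising subsheaf of $\cL^{(n)}_{\tau(y)}$ would give an $\sym$-invariant reflexive subsheaf of $(\cL^{(n)}_{\tau(y)})_P\cong\cL_{\tau(y)}^{\{n\}}$ of rank $<n+1$ and slope $\ge 0$; but as a non-equivariant sheaf $\cL_{\tau(y)}^{\{n\}}\cong\bigoplus_{i=1}^{n+1}\overline p_i^{\,*}\cL_{\tau(y)}$ is a direct sum of $n+1$ pairwise non-isomorphic stable line bundles of slope $0$ which $\sym$ permutes transitively, so its only $\sym$-invariant reflexive subsheaves of slope $\ge 0$ are $0$ and the whole sheaf, a contradiction. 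With this, every fibre of $\cF^{(1)}$ is $\gamma_0$-semistable and, combining with \autoref{lem:exception_Q}, the only proper subsheaf of a fibre whose $\gamma_0$-slope equals $0$ is the canonical copy of $Q$ in $\cF^{(1)}_y$ for $y\in E$. One now repeats the argument of \autoref{stable_allb} and \autoref{thm:universal-extension-P1} verbatim, replacing the $\PP^1$-family $\{F_b\}$ by the family $\cF^{(1)}$ over the projective variety $\bA$: the family form of Grothendieck's lemma \autoref{lem:Grothendieck-general} bounds the Chern classes of destabilising subsheaves; since $\delta\cdot\beta>0$ for the big class $\beta=DH_K^{2n-2}$, the subsheaf $Q$ never destabilises $\gamma_\epsilon=\gamma_0+\epsilon\beta$ for $\epsilon>0$, while every other candidate destabilising subsheaf has $\gamma_0$-slope strictly below $0$ and hence a strictly positive threshold as in \autoref{lem:each_destabilizing_class}; finitely many shrinking steps, together with \autoref{open_stable} and \autoref{curve_cone}, give a uniform $e_2>0$ with $H_\epsilon$-stability of all $\cF^{(1)}_y$ for $\epsilon\in(0,e_2)$. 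Then $e=\min(e_1,e_2)$ works.

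\emph{Pairwise non-isomorphism.} Here I argue in three cases. If $y,y'\notin E$ and $\cF^{(1)}_y\cong\cF^{(1)}_{y'}$, then $\Hom_{\KA}(\cL^{(n)}_{\tau(y)},\cL^{(n)}_{\tau(y')})\neq 0$, which via the McKay equivalence and \eqref{eq:tautExt} forces $\cL_{\tau(y)}\cong\cL_{\tau(y')}$, hence $\tau(y)=\tau(y')$ and $y=y'$ since $\tau$ is an isomorphism over $\hA\setminus\{o\}$. If $y,y'\in E$, then $\cF^{(1)}_y\cong F_{b(y)}$ with $b(y)\in\PP^1=\PP(\Ext^1(\reg_{\KA},Q))$, and by \autoref{lem:P1-family} it suffices that $y\mapsto b(y)$ be injective; this morphism is the projectivisation of the linear map $\CC^2\to\Ext^1(\reg_{\KA},Q)$ attached by Künneth's formula to the relative extension \eqref{eqn:restriction-Langton} (one uses $\Hom(\reg_{\KA},Q)=\Ho^0(Q)=0$ and $\cO_E(-E)\cong\cO_{\PP^1}(1)$), and \autoref{lem:non-trivial-ext} says precisely that this linear map has trivial kernel, so it is an isomorphism of $2$-dimensional spaces; hence $y\mapsto b(y)$ is even an isomorphism $E\xrightarrow{\sim}\PP^1$. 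Finally, if $y\in E$ and $y'\notin E$, then $\Ho^1(\KA,\cF^{(1)}_y)$ is one-dimensional by \autoref{cor:hF}, whereas $\Ho^1(\KA,\cF^{(1)}_{y'})\cong\Ho^1_\sym(\cL_{\tau(y')}^{\{n\}})=0$, being by \eqref{eq:tautH} a direct summand of $\Ho^*(A,\cL_{\tau(y')})\otimes\Ho^*_{\sym_n}(\reg_{P_{n-1}})$ and a non-trivial degree-$0$ line bundle on an abelian surface having vanishing cohomology. So the fibres are pairwise non-isomorphic in all cases.

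\emph{Expected main difficulty.} The local freeness and the cohomological bookkeeping in the last part are routine. The real work is the uniformity of $e$ over the fibres lying off $E$: to run the Grothendieck-lemma/shrinking machinery one needs the $\gamma_0$-stability of every $\cL^{(n)}_{\tau(y)}$, and establishing this via the transform $(\_)_P$ and the transitivity of the $\sym$-action on the line-bundle summands of $\cL_{\tau(y)}^{\{n\}}$ is the technical heart of the argument.
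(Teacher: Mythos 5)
Your proof is correct, and its skeleton coincides with the paper's: treat the fibres over $E$ via \autoref{lem:non-trivial-ext} and \autoref{thm:universal-extension-P1}, treat the fibres over $\bA\setminus E$ via stability of tautological bundles of non-trivial degree-$0$ line bundles, and then show the two families never meet. The differences are tactical but genuine. First, the paper simply cites \cite[Proposition 2.8, Theorem 2.10]{reede_stable_2022} for the $\gamma_0$-stability, the $H_\epsilon$-stability and the pairwise non-isomorphism of the $\cL^{(n)}_{\tau(y)}$ with $\tau(y)\neq o$; you re-prove the $\gamma_0$-stability by the transform $(\_)_P$ and the transitive $\sym$-action on the line-bundle summands of $\cL_{\tau(y)}^{\{n\}}$ (which is exactly the cited argument), and then run the Grothendieck-lemma/shrinking machinery of \autoref{stable_allb}--\autoref{thm:universal-extension-P1} once over the whole family on $\bA$. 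This buys a single uniform $e$ in one pass and makes the uniformity over $\bA\setminus E$ explicit, whereas the paper obtains it by intersecting the intervals coming from two separate sources. Second, for the identification $E\cong\PP(\Ext^1(\reg_{\KA},Q))$ the paper computes the degree of the classifying map by comparing the subsheaf $Q\boxtimes\cO_E(-E)$ of \eqref{eqn:restriction-Langton} with the pullback of \eqref{eqn:universal-O-Q} and using $E^2=-1$; your Künneth argument, reading \autoref{lem:non-trivial-ext} as injectivity of a linear map between two $2$-dimensional spaces, is an equivalent linear-algebra reformulation of the same fact. Third, to separate the fibres over $E$ from those off $E$ the paper contrasts $\gamma_0$-stability (\cite[Proposition 2.8]{reede_stable_2022}) with the strict $\gamma_0$-semistability of \autoref{cor:Fbss}, while you contrast $h^1(\cF^{(1)}_y)=1$ (from \autoref{cor:hF}) with $h^1(\cL^{(n)}_{\tau(y')})=0$ (from \eqref{eq:tautH} and the vanishing of the cohomology of a non-trivial degree-$0$ line bundle); both discriminants are valid. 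Finally, you prove the local freeness of $\cF^{(1)}$ explicitly via the two-term resolution of $Q\boxtimes\cO_E$, a point the paper leaves implicit. No gaps.
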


\begin{proof}
	We proceed in several steps.
	
	\textsc{Step 1.} We claim that $\cF^{(1)}_E$ is the universal extension \eqref{eqn:universal-O-Q} of $\cO_{K_n(A)}$ by $Q$. Moreover, there exists some $e>0$, such that the fibers $\cF^{(1)}_y$ for $y \in E$ are pairwise non-isomorphic $H_\epsilon$-stable bundles for each $\epsilon \in (0, e)$.
	
	Indeed, \autoref{lem:non-trivial-ext} shows that the fiber of $\cF^{(1)}$ over each point of $E$ is a non-trivial extension. Due to the universality of \eqref{eqn:universal-O-Q} we obtain a classifying morphism $$\varphi \colon E \longrightarrow \PP (\Ext^1(\cO_{K_n(A)}, Q)),$$
	which is a morphism from $\PP^1$ to $\PP^1$. We denote the degree of $\varphi$ by $d$, then by pulling back \eqref{eqn:universal-O-Q} we obtain
	\begin{equation*}
		0 \longrightarrow Q \boxtimes \cO_E(d) \longrightarrow \cF^{(1)}_E \longrightarrow \cO_{K_n(A)} \boxtimes \cO_E \longrightarrow 0.
	\end{equation*}
	Comparing it with \eqref{eqn:restriction-Langton} and noticing that $E^2 = -1$ we conclude that $d=1$, hence $\varphi$ is an isomorphism. The existence of $e$ follows immediately from \autoref{thm:universal-extension-P1}.
	
	\textsc{Step 2.} We claim that the fibers $\cF^{(1)}_y$ for $y \in \bA \setminus E$ are pairwise non-isomorphic vector bundles. Moreover, they are $H_\epsilon$-stable for each $\epsilon \in (0, e)$ possibly after shrinking $e$ from Step 1.
	
	Indeed, away from the exceptional divisor $E \subseteq \bA$, we have
	\begin{equation}\label{eqn:B-minus-E}
		\cF^{(1)}_{\bA \setminus E} = \cF^{(0)}_{\bA \setminus E} = \cL^{(n)}_{\hA \setminus \{o\}}.
	\end{equation}
	Since fibers of $\cL^{(n)}$ are distinct tautological bundles, the claim follows from \cite[Theorem 2.10]{reede_stable_2022}.
	
	\textsc{Step 3.} It remains to show that fibers of $\cF^{(1)}_{\bA \setminus E}$ and fibers of $\cF^{(1)}_E$ are never isomorphic.
	
	Indeed, by \cite[Proposition 2.8]{reede_stable_2022}, we know that fibers of $\cF^{(1)}_{\bA \setminus E}$ are $\gamma_0$-stable. However, by \autoref{cor:Fbss}, we find that fibers of $\cF^{(1)}_E$ are strictly $\gamma_0$-semistable. Therefore they can never be isomorphic.
\end{proof}

\subsection{Component of the moduli space}

In this section, we show that the family $\cF^{(1)}$ gives rise to a smooth connected component of the moduli space of $H_\epsilon$-stable sheaves on $\KA$.

By \autoref{prop:fibers_of_F1}, $\cF^{(1)}$ is a family of $H_\epsilon$-stable bundles parametrized by $\bA$, which induces a classifying morphism 
$$ f \colon \bA \longrightarrow \cM, $$
where $\cM$ is the moduli space of $H_\epsilon$-stable sheaves on $\KA$.

\begin{theorem}\label{prop:main-result}
	 The classifying morphism $f$ embeds $\bA$ as a smooth connected component of the moduli space $\cM$.
\end{theorem}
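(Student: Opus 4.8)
The plan is to establish, in turn, that $f$ is injective, that $f(\bA)$ is open and closed in $\cM$, and that $f$ restricts to an isomorphism onto $f(\bA)$; the first two of these identify $\bA$ with a connected component of $\cM$, and all the local control will come from knowing that at every fibre of $\cF^{(1)}$ the first $\Ext$ group is exactly two-dimensional. First I would note that injectivity of $f$ is immediate from \autoref{prop:fibers_of_F1}: the fibres of $\cF^{(1)}$ are pairwise non-isomorphic stable bundles, and a moduli space of stable sheaves separates isomorphism classes. Since $\bA$ is projective, $f$ is proper, hence (being injective, so quasi-finite) finite onto its image; thus $f(\bA)\subseteq\cM$ is a closed, connected, irreducible subvariety of dimension $\dim\bA=2$, and $f\colon\bA\to f(\bA)$ is a finite bijective morphism, hence, over $\CC$, birational onto $f(\bA)$.

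Next I would prove that $\ext^1(\cF^{(1)}_y,\cF^{(1)}_y)=2$ for every closed point $y\in\bA$. For $y$ in the exceptional divisor $E$ we have $\cF^{(1)}_y\cong F_b$ for some $b\in\PP^1$ by Step 1 of \autoref{prop:fibers_of_F1}, and $\ext^1(F_b,F_b)=2$ is precisely \autoref{thm:sec2}. For $y\in\bA\setminus E$ we have $\cF^{(1)}_y\cong L^{(n)}$ for a non-trivial $L\in\Pic^0(A)$ by \eqref{eqn:B-minus-E}; since $\Ho^*(A,L)=\Ho^*(A,L^{\vee})=0$ for such an $L$, the second direct summand in \eqref{eq:tautExt} (applied with $E=F=L$) vanishes, while the first one, combined with $\Ho^0_{\sym_n}(\reg_{P_{n-1}})\cong\CC$, $\Ho^1_{\sym_n}(\reg_{P_{n-1}})=0$ (see \eqref{eq:OH}), and the equivalence $\Psi$ of \autoref{prop:tautMcKay}, gives $\Ext^1(L^{(n)},L^{(n)})\cong\Ext^1_A(L,L)\cong\CC^2$.

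With this in hand, a dimension count finishes the argument. At any $[F]\in f(\bA)$ the Zariski tangent space $T_{[F]}\cM\cong\Ext^1(F,F)$ has dimension $2$, while the deformations of $F$ are obstructed only in $\Ext^2(F,F)$, so $\dim_{[F]}\cM\le\ext^1(F,F)=2$; conversely $f(\bA)$ is a $2$-dimensional closed subvariety through $[F]$, whence $\dim_{[F]}\cM\ge2$. Therefore $\cM$ is smooth of dimension $2$ at $[F]$, and since this holds along all of $f(\bA)$, near each of its points $\cM$ is an irreducible smooth surface germ containing the irreducible $2$-dimensional closed subvariety $f(\bA)$, so $f(\bA)$ coincides with $\cM$ there. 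Hence $f(\bA)$ is open in $\cM$ and, being closed and connected, is a connected component of $\cM$, which is in particular smooth and hence normal. Finally $f\colon\bA\to f(\bA)$ is a finite birational morphism onto the normal variety $f(\bA)$, so it is an isomorphism by Zariski's main theorem; this proves the theorem.

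The step I expect to be the main obstacle is the second one. Its hard case, $y\in E$, \emph{is} \autoref{thm:sec2}, whose proof takes up all of \autoref{sec:extension}; and the remaining case --- tautological bundles attached to non-trivial degree-zero line bundles, which is exactly the case excluded from \cite[Theorem 2.10]{reede_stable_2022} --- has to be checked carefully to ensure that $\ext^1$ really equals $2$ and does not jump. Everything else is formal, though one should bear in mind that upgrading ``closed subvariety of the expected dimension'' to ``connected component'' genuinely uses smoothness of $\cM$ along $f(\bA)$, and so feeds back on the $\ext^1$-computation of the second step.
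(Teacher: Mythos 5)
Your proposal is correct and follows essentially the same route as the paper: injectivity of $f$ from \autoref{prop:fibers_of_F1}, the computation $\ext^1(\cF^{(1)}_y,\cF^{(1)}_y)=2$ via \autoref{thm:sec2} on $E$ and via \eqref{eq:tautExt} (with the second summand killed by $\Ho^*(L)=\Ho^*(L^\vee)=0$) off $E$, and then the standard ``injective classifying map plus constant tangent-space dimension'' conclusion. The only difference is that the paper delegates your final dimension-count/openness/Zariski-main-theorem paragraph to the black box \cite[Lemma 1.6]{reede_examples_2021}, which you have correctly unpacked.
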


\begin{proof}
	By \autoref{prop:fibers_of_F1}, we see that $f$ is injective on closed points. For each closed point $y \in \bA$, we have that
	$$ \dim T_{f(y)}\cM = \dim \Ext^1(\cF^{(1)}_y, \cF^{(1)}_y) = 2, $$
	which follows from \autoref{prop:Mea}(12) for $y \in \bA \setminus E$, and from \autoref{thm:sec2} for $y \in E$. Since $\bA$ is a smooth surface, it follows from \cite[Lemma 1.6]{reede_examples_2021} that $f$ is an isomorphism from $\bA$ to a connected component of $\cM$. 
\end{proof}


\begin{thebibliography}{Mum08}
	
	\bibitem[AW10]{AndWis--Kummer}
	Marco Andreatta and Jaros\l aw~A. Wi\'{s}niewski.
	\newblock On the {K}ummer construction.
	\newblock {\em Rev. Mat. Complut.}, 23(1):191--215, 2010.
	
	\bibitem[Bea83]{beauville_83}
	Arnaud Beauville.
	\newblock Vari\'{e}t\'{e}s {K}\"{a}hleriennes dont la premi\`ere classe de
	{C}hern est nulle.
	\newblock {\em J. Differential Geom.}, 18(4):755--782 (1984), 1983.
	
	\bibitem[Bec24]{beckmann_24}
	Thorsten Beckmann.
	\newblock Atomic objects on hyper-k\"{a}hler manifolds.
	\newblock {\em J. Algebraic Geom.}, 2024.
	
	\bibitem[BKR01]{BKR}
	Tom Bridgeland, Alastair King, and Miles Reid.
	\newblock The {M}c{K}ay correspondence as an equivalence of derived categories.
	\newblock {\em J. Amer. Math. Soc.}, 14(3):535--554, 2001.
	
	\bibitem[BO23]{BO--equivariant}
	Thorsten Beckmann and Georg Oberdieck.
	\newblock On equivariant derived categories.
	\newblock {\em Eur. J. Math.}, 9(2):Paper No. 36, 39, 2023.
	
	\bibitem[Bot23]{bottini_24}
	Alessio Bottini.
	\newblock {\em {Stable sheaves on hyper-K{\"a}hler manifolds}}.
	\newblock Theses, {Universit{\'e} Paris-Saclay ; Universit{\`a} degli studi di
		Roma}, 2023.
	
	\bibitem[Dan01]{Dan}
	Gentiana Danila.
	\newblock Sur la cohomologie d'un fibr\'{e} tautologique sur le sch\'{e}ma de
	{H}ilbert d'une surface.
	\newblock {\em J. Algebraic Geom.}, 10(2):247--280, 2001.
	
	\bibitem[dCM02]{decataldo_2002}
	Mark Andrea~A. de~Cataldo and Luca Migliorini.
	\newblock The hard {L}efschetz theorem and the topology of semismall maps.
	\newblock {\em Ann. Sci. \'{E}cole Norm. Sup. (4)}, 35(5):759--772, 2002.
	
	\bibitem[Ela14]{Elagin--onequi}
	Alexey Elagin.
	\newblock On equivariant triangulated categories.
	\newblock {\em arXiv:1403.7027v2}, 2014.
	
	\bibitem[Fat24]{fati_24}
	Enrico Fatighenti.
	\newblock Examples of non-rigid, modular vector bundles on hyperk\"{a}hler
	manifolds.
	\newblock {\em Int. Math. Res. Not. IMRN}, {}(10):8782--8793, 2024.
	
	\bibitem[GKP16]{greb_movable_2016}
	Daniel Greb, Stefan Kebekus, and Thomas Peternell.
	\newblock Movable curves and semistable sheaves.
	\newblock {\em Int. Math. Res. Not. IMRN}, {}(2):536--570, 2016.
	
	\bibitem[GL24]{guo_24}
	Hanfei Guo and Zhiyu Liu.
	\newblock Atomic sheaves on hyper-k\"ahler manifolds via {B}ridgeland moduli
	spaces.
	\newblock {\em arXiv:2406.19361}, 2024.
	
	\bibitem[Hai01]{Hai}
	Mark Haiman.
	\newblock Hilbert schemes, polygraphs and the {M}acdonald positivity
	conjecture.
	\newblock {\em J. Amer. Math. Soc.}, 14(4):941--1006, 2001.
	
	\bibitem[Har10]{hartshorne_deformation_2010}
	Robin Hartshorne.
	\newblock {\em Deformation {Theory}}, volume 257 of {\em Graduate {Texts} in
		{Mathematics}}.
	\newblock Springer New York, New York, NY, 2010.
	
	\bibitem[HL10]{huybrechts_moduli}
	Daniel Huybrechts and Manfred Lehn.
	\newblock {\em The geometry of moduli spaces of sheaves}.
	\newblock Cambridge Mathematical Library. Cambridge University Press,
	Cambridge, second edition, 2010.
	
	\bibitem[KM98]{kollar_birational_1998}
	J\'{a}nos Koll\'{a}r and Shigefumi Mori.
	\newblock {\em Birational geometry of algebraic varieties}, volume 134 of {\em
		Cambridge Tracts in Mathematics}.
	\newblock Cambridge University Press, Cambridge, 1998.
	\newblock With the collaboration of C. H. Clemens and A. Corti, Translated from
	the 1998 Japanese original.
	
	\bibitem[Kru18]{krug_remarks_2018}
	Andreas Krug.
	\newblock Remarks on the derived {M}c{K}ay correspondence for {H}ilbert schemes
	of points and tautological bundles.
	\newblock {\em Math. Ann.}, 371(1-2):461--486, 2018.
	
	\bibitem[Lan83]{lange_extensions_1983}
	Herbert Lange.
	\newblock Universal families of extensions.
	\newblock {\em J. Algebra}, 83(1):101--112, 1983.
	
	\bibitem[Mar24]{markman_24}
	Eyal Markman.
	\newblock Stable vector bundles on a hyper-{K}\"{a}hler manifold with a rank 1
	obstruction map are modular.
	\newblock {\em Kyoto J. Math.}, 64(3):635--742, 2024.
	
	\bibitem[Mea15]{meachan_derived_2015}
	Ciaran Meachan.
	\newblock Derived autoequivalences of generalised {Kummer} varieties.
	\newblock {\em Math. Res. Lett.}, 22(4):1193--1221, 2015.
	
	\bibitem[Muk81]{Muk}
	Shigeru Mukai.
	\newblock Duality between {$D(X)$} and {$D(\hat X)$} with its application to
	{P}icard sheaves.
	\newblock {\em Nagoya Math. J.}, 81:153--175, 1981.
	
	\bibitem[Muk84]{mukai_86}
	Shigeru Mukai.
	\newblock Symplectic structure of the moduli space of sheaves on an abelian or
	{$K3$} surface.
	\newblock {\em Invent. Math.}, 77(1):101--116, 1984.
	
	\bibitem[Mum08]{Mum--abelianbook}
	David Mumford.
	\newblock {\em Abelian varieties}, volume~5 of {\em Tata Institute of
		Fundamental Research Studies in Mathematics}.
	\newblock Published for the Tata Institute of Fundamental Research, Bombay; by
	Hindustan Book Agency, New Delhi, 2008.
	\newblock With appendices by C. P. Ramanujam and Yuri Manin, Corrected reprint
	of the second (1974) edition.
	
	\bibitem[NR69]{nara_ext}
	Mudumbai~S. Narasimhan and Sundararaman Ramanan.
	\newblock Moduli of vector bundles on a compact {R}iemann surface.
	\newblock {\em Ann. of Math. (2)}, 89:14--51, 1969.
	
	\bibitem[O'G22]{ogrady_22}
	Kieran~G. O'Grady.
	\newblock Modular sheaves on hyperk\"{a}hler varieties.
	\newblock {\em Algebr. Geom.}, 9(1):1--38, 2022.
	
	\bibitem[Ree24]{reede_descent_2024}
	Fabian Reede.
	\newblock Descent of tautological sheaves from {Hilbert} schemes to {Enriques}
	manifolds.
	\newblock {\em Ann. Mat. Pura Appl.}, 203:2095--2109, 2024.
	
	\bibitem[RZ21]{reede_examples_2021}
	Fabian Reede and Ziyu Zhang.
	\newblock Examples of smooth components of moduli spaces of stable sheaves.
	\newblock {\em Manuscripta Math.}, 165(3-4):605--621, 2021.
	
	\bibitem[RZ22a]{reede_zhang_22}
	Fabian Reede and Ziyu Zhang.
	\newblock Stability of some vector bundles on {H}ilbert schemes of points on
	{K}3 surfaces.
	\newblock {\em Math. Z.}, 301(1):315--341, 2022.
	
	\bibitem[RZ22b]{reede_stable_2022}
	Fabian Reede and Ziyu Zhang.
	\newblock Stable vector bundles on generalized {Kummer} varieties.
	\newblock {\em Forum Math.}, 34(4):1015--1031, 2022.
	
	\bibitem[Sca09]{Scala--Coh}
	Luca Scala.
	\newblock Cohomology of the {H}ilbert scheme of points on a surface with values
	in representations of tautological bundles.
	\newblock {\em Duke Math. J.}, 150(2):211--267, 2009.
	
	\bibitem[Sch10]{schlick_10}
	Ulrich Schlickewei.
	\newblock Stability of tautological vector bundles on {H}ilbert squares of
	surfaces.
	\newblock {\em Rend. Semin. Mat. Univ. Padova}, 124:127--138, 2010.
	
	\bibitem[Sta16]{stapleton_taut_2016}
	David Stapleton.
	\newblock Geometry and stability of tautological bundles on {H}ilbert schemes
	of points.
	\newblock {\em Algebra Number Theory}, 10(6):1173--1190, 2016.
	
	\bibitem[Wan16]{wandel_kummer}
	Malte Wandel.
	\newblock Tautological sheaves: stability, moduli spaces and restrictions to
	generalised {K}ummer varieties.
	\newblock {\em Osaka J. Math.}, 53(4):889--910, 2016.
	
	\bibitem[Yos01]{yoshioka_01}
	K\={o}ta Yoshioka.
	\newblock Moduli spaces of stable sheaves on abelian surfaces.
	\newblock {\em Math. Ann.}, 321(4):817--884, 2001.
	
\end{thebibliography}

\end{document}